\newcommand\bR{\mathbb{R}}
\newcommand\kC{C_{\text{kin}}}
\newcommand\cR{\mathcal{R}}
\newcommand\tQ{\widetilde Q}
\newcommand\llocxv{L_{2; \,  \text{loc}, x, v}  }
\newcommand\slocxv{S_{2; \,  \text{loc}, x, v}  }
\renewcommand{\epsilon}{\varepsilon}
\numberwithin{equation}{section}
\newcommand\cbrk{\text{$]$\kern-.15em$]$}}
\newcommand\opar{
\text{\,\raise.2ex\hbox{${\scriptstyle |}$}\kern-.34em$($}}
 \theoremstyle{definition}
\newtheorem{theorem}{Theorem}[section]
\newtheorem{lemma}[theorem]{Lemma}
\newtheorem{proposition}[theorem]{Proposition}
\newtheorem{corollary}[theorem]{Corollary}
\newtheorem{definition}{Definition}[section]
\theoremstyle{remark}
\newtheorem{remark}[theorem]{Remark}
\newtheorem{assumption}[theorem]{Assumption}
\newcommand{\nlimsup}{\operatornamewithlimits{\overline{lim}}}
\begin{document}

\title[Global Schauder estimates for KFP equations]{Global Schauder estimates for kinetic Kolmogorov-Fokker-Planck equations}

\author[H. Dong]{Hongjie Dong}
\address[H. Dong]{Division of Applied Mathematics, Brown University, 182 George Street, Providence, RI 02912, USA}
\email{Hongjie\_Dong@brown.edu }
\thanks{H. Dong was partially supported by the Simons Foundation, grant no. 709545, a Simons fellowship, grant no. 007638, the NSF under agreement DMS-2055244, and the Charles Simonyi Endowment at the Institute for Advanced Study.}

\author[T. Yastrzhembskiy]{Timur Yastrzhembskiy}
\address[T. Yastrzhembskiy]{Division of Applied Mathematics, Brown University, 182 George Street, Providence, RI 02912, USA}
\email{Timur\_Yastrzhembskiy@brown.edu}

\subjclass[2010]{35K70, 35H10, 35B45, 35K15, 35R05}
\keywords{Kinetic Kolmogorov-Fokker-Planck equations, Schauder estimates, Campanato's method, time irregular coefficients.}

\begin{abstract}
We  present  global  Schauder  type  estimates in all variables and unique solvability results in kinetic H\"older  spaces for kinetic Kolmogorov-Fokker-Planck (KFP) equations.
The leading coefficients are H\"older continuous in the $x, v$ variables and are merely measurable in the temporal variable.
Our proof is inspired by Campanato's approach to Schauder  estimates and does not rely on the estimates of the fundamental solution of the KFP operator.
\end{abstract}

\maketitle

\tableofcontents

\section{Introduction and main result}
                            \label{section 1}
Let $d \ge 1$, $x \in \bR^d$ be the spatial variable, $v \in \bR^d$ be the velocity variable, and denote $z = (t, x, v)$.
Throughout the paper, $T\in (-\infty, \infty]$, and $\bR^{1+2d}_T: = (-\infty, T) \times \bR^{2d}$.
The goal of this article is to establish a Schauder type estimate for the KFP equation
\begin{equation}
                \label{1.1}
   P u + b \cdot D_{v} u  + (c+\lambda^2) u = f,
\end{equation}
where
\begin{equation}
                \label{1.2}
    P: =  \partial_t u - v \cdot D_x u - a^{i j} (z) D_{v_i v_j} u.
\end{equation}
The above equation appears in kinetic theory, theory of diffusion processes, and mathematical finance (see \cite{P_05} and the references therein). In particular, \eqref{1.1} with $-v \cdot D_x u$ replaced with $v \cdot D_x u$ can be viewed as a linearization of the Landau  equation (LE) (see \cite{AV_04}), an important model of  weakly coupled plasma.  We also mention that $P$ is the infinitesimal generator of the Langevin diffusion process (see \cite{P_14}), so that the time-reversed version of \eqref{1.1} can be  viewed as a backward Kolmogorov equation for the Langevin process.

It is a fundamental problem to establish the maximal regularity for the KFP equation in various functional spaces such as H\"older spaces (see  \cite{BB_22}, \cite{CHM_21}, \cite{DFP_06}, \cite{HS_20},  \cite{HW_22}, \cite{IM_21}, \cite{L_97}, \cite{M_97}, \cite{PRS_22}) and $L_p$ spaces (see \cite{BCLP_13}, \cite{CZ_19}, \cite{DY_21a} - \cite{DY_21b} and the references therein) that is analogous to the theory developed for nondegenerate equations (see, for example, \cite{F_64}, \cite{Kr_96}, \cite{Kr_08}). Such results play a crucial role in the studies of the  conditional regularity of the LE (see \cite{HS_20}) and boundary value problem for the LE with the specular reflection boundary condition (see \cite{DGO_20} - \cite{DGY_21}).

The purpose of the present paper is threefold. First, we establish \textit{global}  Schauder estimates in $t, x, v$ variables for Eq. \eqref{1.1} with \textit{time irregular} leading coefficients (see Theorem \ref{theorem 1.5}). This result  appears to be new (see the discussion in Sections \ref{section 1.4} - \ref{section 1.7}).
Second, we show how the constant on the right-hand side of the a priori estimate \eqref{eq1.5.2} depends on the lower eigenvalue bound $\delta$ (see Assumption \ref{assumption 1.1}). This is relevant to the linearization of the LE near the global Maxwellian  because for such an equation, one has
$$
	N_1 |v|^{-3} \delta_{i j} < 	a^{i j} (z) \le N_2 |v|^{-1} \delta_{i j}.
$$
Hence, localizing to the velocity shell $|v| \sim 2^n$, we obtain the equation of type \eqref{1.1} with $\delta \sim 2^{-3n}$ (see the details in \cite{DGO_20} - \cite{DGY_21}).  Third, we  prove the a priori estimates without using the  fundamental solution of the KFP equation.
Our method is inspired by Campanato's approach. See the details in Section \ref{section 1.3}.  This article is a part of the present authors' program to develop the maximal regularity results for the KFP equation  with rough leading coefficients via kernel free approach (see \cite{DY_21a} - \cite{DY_21b}).

Before we state the main result and review the relevant literature, we introduce some notation.

\subsection{Notation}
                \label{section 1.1}
                 In this section, $\alpha \in (0, 1]$ is a number, and $G \subset \bR^{1+2d}$ is an open set.

\textit{The usual H\"older space.}
For an open set $\Omega \subset \bR^d$, by  $C^{\alpha} ( \Omega)$, we mean the usual H\"older space with the seminorm
$$
    [u]_{  C^{\alpha} ( \Omega) } := \sup_{ x, x' \in \Omega: x \neq x'} \frac{|u (x) - u (x')|}{|x-x'|^{\alpha}},
$$
and the norm
$$
     \|u\|_{  C^{\alpha} ( \Omega) } := \|u\|_{ L_{\infty} ( \Omega) } + [u]_{  C^{\alpha} (\Omega) }.
$$

\textit{Anisotropic H\"older spaces.}
For $\alpha \in (0, 1]$ and an open set $D \subset \bR^{2d}$, we denote
$$
    [u]_{C^{\alpha/3, \alpha}_{x, v} (D)}: = \sup_{(x_i, v_i) \in D: (x_1, v_1) \neq (x_2, v_2)} \frac{|u (x_1, v_1) - u (x_2, v_2)|}{(|x_1-x_2|^{1/3} + |v_1-v_2|)^{\alpha}}.
$$
Furthermore, for an open set of the form
\begin{equation}
\label{eq1.12}
    G = (t_0, t_1) \times D, \,  -\infty \le t_0 < t_1 \le \infty,
\end{equation}
 we set
\begin{align*}
& L_{\infty} C^{\alpha/3, \alpha}_{x, v} (G) : = L_{\infty} ((t_0, t_1), C^{\alpha/3, \alpha}_{x, v} (D)), \\
&
    [u]_{ L_{\infty} C^{\alpha/3, \alpha}_{x, v} (G)} = \text{ess sup}_{t \in (t_0, t_1)} [u (t, \cdot)]_{C^{\alpha/3, \alpha}_{x, v} (D)}, \\
    &  \|u\|_{ L_{\infty} C^{\alpha/3, \alpha}_{x, v} (G)} = \|u\|_{L_{\infty} (G) } +  [u]_{ L_{\infty} C^{\alpha/3, \alpha}_{x, v} (G)}.
\end{align*}
Furthermore, we say that $u \in \mathbb{C}^{2, \alpha} (G)$ if
$$
    u, D_v u, D^2_v u,  \partial_t u - v \cdot D_x u \in  L_{\infty} C^{\alpha/3, \alpha}_{x, v} (G).
$$
We stress  that  $\partial_t u$ and $v \cdot D_x u$ are understood in the sense of distributions.
The norm in this space is defined as
\begin{equation}
    \label{1.7}
    \|u\|_{\mathbb{C}^{2, \alpha} (G)} : =  \|u\| + \|D_v u\| + \|D^2_v u\| + \|\partial_t u - v \cdot D_x u\|,
\end{equation}
where $\|\cdot\| = \|\cdot\|_{_{L_{\infty} C^{\alpha/3, \alpha}_{x, v} (G)}}$.

\textit{Kinetic quasi-distance and kinetic H\"older spaces.}
We denote
\begin{equation}
                \label{1.3}
        \rho (z, z_0) =   \max\{ |t-t_0|^{1/2},  |x - x_0 + (t - t_0) v_0|^{1/3}, |v-v_0|\}.
\end{equation}
Note that $\rho$ satisfies all the properties of the quasi-distance except the symmetry.
By $\widehat \rho$ we denote a symmetrization of $\rho$  given by
\begin{equation}
                \label{1.4}
    \widehat \rho (z, z') = \rho (z, z')+\rho(z', z).
\end{equation}

We introduce the kinetic H\"older seminorm
\begin{equation}
                \label{1.5}
    [u]_{  \kC^{\alpha} (G) } := \sup_{ z, z' \in G: z \ne z'} \frac{|u (z) - u (z')|}{\rho^{\alpha} (z, z')}
\end{equation}
and the kinetic H\"older space
$$
    \kC^{\alpha} (G):=\{u \in L_{\infty} (G): [u]_{  \kC^{\alpha} (G) } < \infty\}
$$
equipped with the norm
$$
    \|u\|_{  \kC^{\alpha} (G) } = \|u\|_{ L_{\infty} (G) } + [u]_{  \kC^{\alpha} (G) }.
$$

Furthermore, for an open set of the form \eqref{eq1.12},  we define $\kC^{2, \alpha} (G)$ to be the Banach space of all $\kC^{\alpha} (G)$ functions $u$ such that  the norm
 \begin{equation*}
    \|u\|_{ \kC^{2, \alpha} (G) } := \|u\|_{ \kC^{ \alpha} (G) } + \|\partial_t  u - v \cdot D_x u\|_{ L_{\infty} C^{\alpha/3, \alpha}_{x, v} (G) } + [D^2_v u]_{ \kC^{\alpha} (G) }
\end{equation*}
is finite.

\begin{remark}
Due to Lemma \ref{lemma B.1} $(i)$, replacing $\rho (z, z')$ with $\widehat \rho (z, z')$ in \eqref{1.5} yields an equivalent space.
\end{remark}

\begin{remark}
\label{remark 1.5}
Our definition of the spaces $\kC^{\alpha}$ and $\kC^{2, \alpha}$ is similar to  those used in \cite{M_97} and \cite{IM_21}.
In particular, it follows from Remark 2.9 in \cite{IM_21} that the $C^{2+\alpha}_l$ seminorm  (see Definition 2.2 therein)  is equivalent to
$$
    [\partial_t u + v \cdot D_x u]_{ \widetilde C^{\alpha} (\bR^{1+2d}) } + [D^2_v u]_{ \widetilde C^{\alpha} (\bR^{1+2d}) },
$$
where
\begin{align}
\label{1.15}
  &  [f]_{ \widetilde C^{\alpha} (\bR^{1+2d}) }:= \sup_{ z, z' \in \bR^{1+2d}: z \ne z'} \frac{|u (z) - u (z')|}{ d_l (z, z')}, \\
  &  d_l (z, z') =  \max\{ |t-t'|^{1/2},  |x - x' - (t - t') v'|^{1/3}, |v-v'|\}\notag.
\end{align}
\end{remark}

\textbf{Convention.} By $N = N (\cdots)$ and $\theta = \theta (\cdots)$,  we denote  constants depending only on the parameters inside the
parentheses. These constants  might change from line to line. Sometimes, when it is clear what parameters $N$ and $\theta$  depend on, we omit  them.

\subsection{Main results}
                    \label{section 1.2}
\begin{assumption}
                \label{assumption 1.1}
The function $a = (a^{i j} (z), i, j = 1, \ldots, d)$ is measurable, and there exists  some $\delta \in (0, 1)$  such that
$$
    a^{i j} \xi_i \xi_j \ge \delta |\xi|^2, \quad |a| \le \delta^{-1}.
$$
\end{assumption}

\begin{assumption}
                \label{assumption 1.2}
The function $a$ is of class $L_{\infty}  C^{\alpha/3, \alpha}_{x, v} (\bR^{1+2d}_T)$, and for some $K > 0$,
$$
    [a]_{ L_{\infty} C^{\alpha/3, \alpha}_{x, v} (\bR^{1+2d}_T) } \le K \delta^{-1}.
$$
\end{assumption}

\begin{assumption}
                \label{assumption 1.3}
The functions $b = (b^1 (z), \ldots, b^d (z))$ and $c = c (z)$ are bounded measurable such that
$$
    \|b\|_{ L_{\infty} C^{\alpha/3, \alpha}_{x, v} (\bR^{1+2d}_T) } + \|c\|_{ L_{\infty} C^{\alpha/3, \alpha}_{x, v} (\bR^{1+2d}_T) } \le L
$$
for some $L > 0$.
\end{assumption}

\begin{definition}
For $s \in \bR$, the fractional Laplacian $(-\Delta_x)^s$ is defined as a Fourier multiplier with the symbol $|\xi|^{2s}$.
For  $s \in (0, 1)$ and $u \in L_p (\bR^d)$,  $(-\Delta_x)^s u$ is understood  as a distribution  defined by  duality as follows:
\begin{equation*}
	((-\Delta_x)^{s} u, \phi) = (u, (-\Delta_x)^{s} \phi), \quad \phi \in C^{\infty}_0 (\bR^d).
\end{equation*}
When $s \in (0, 1/2)$,  for any Lipschitz function $u \in \cup_{p \in [1, \infty]} L_p (\bR^d)$,
the  pointwise formula
\begin{equation}
            \label{1.9}
	(-\Delta_x)^s u (x) = N (d, s) \int_{\bR^d} \frac{u (x) - u (x+y)}{|y|^{d+2s}} \, dy
\end{equation}
is valid.
\end{definition}

\begin{theorem}
                \label{theorem 1.5}
Let $\alpha \in (0, 1)$,
and     Assumptions \ref{assumption 1.1} - \ref{assumption 1.3} be satisfied.
Then, the following assertions hold.

$(i)$ For any $u \in \kC^{2, \alpha} (\bR^{1+2d}_T)$, we have
\begin{equation}
                \label{eq1.5.2}
\begin{aligned}
 & [D^2_v u] +  [(-\Delta_x)^{1/3} u] + [\partial_t u - v \cdot D_x u]_{ L_{\infty} C^{\alpha/3, \alpha}_{x, v} (\bR^{1+2d}_T) }
 + \sup_{ (t, v) \in \bR^{1+d}_T} [u (t, \cdot, v)]_{ C^{(2+\alpha)/3} (\bR^d) }\\
 &\le
  N  \delta^{-\theta} \big([P u + b \cdot D_v u + c u]_{ L_{\infty} C^{\alpha/3, \alpha}_{x, v} (\bR^{1+2d}_T)} + \|u\|_{ L_{\infty} (\bR^{1+2d}_T) }\big),
\end{aligned}
\end{equation}
where $[\,\cdot\,] = [\,\cdot\,]_{ \kC^{\alpha} (\bR^{1+2d}_T) }$, $N = N (d, \alpha, K, L)$, and $\theta=\theta(d,\alpha)$.

$(ii)$ There exist numbers
\begin{equation}
                \label{eq1.5.0}
   \lambda_0 = \delta^{-\theta} \widetilde \lambda_0 (d, \alpha, K, L) > 0, \quad
   \theta = \theta (d, \alpha) > 0
\end{equation}
such that for any $u \in \kC^{2, \alpha} (\bR^{1+2d}_T)$ and $\lambda \ge \lambda_0$,
\begin{equation}
                \label{eq1.5.1}
\begin{aligned}
  &  \lambda^{2+\alpha} \|u\|_{ L_{\infty} (\bR^{1+2d}_T) }   + \lambda^{2}  [u] \\
  &\quad + \lambda^{1+\alpha} \|D_v u\|_{ L_{\infty} (\bR^{1+2d}_T) } + \lambda [D_v u] \\
  &\quad+ \lambda^{\alpha}  \|(-\Delta_x)^{1/3} u\|_{ L_{\infty} (\bR^{1+2d}_T) } + [(-\Delta_x)^{1/3} u]\\
  &\quad+  \lambda^{\alpha} \|\partial_t u - v \cdot D_x u\|_{ L_{\infty} (\bR^{1+2d}_T) } + [\partial_t u - v \cdot D_x u]_{ L_{\infty} C^{\alpha/3, \alpha}_{x, v} (\bR^{1+2d}_T) }\\
  &\quad+ \lambda^{\alpha} \| D_v^2 u\|_{ L_{\infty} (\bR^{1+2d}_T) } + [D^2_v u] + \sup_{ (t, v) \in \bR^{1+d}_T} [u (t, \cdot, v)]_{ C^{(2+\alpha)/3} (\bR^d) } \\
  &  \le  N  \delta^{-\theta} \big([P u + b \cdot D_v u + (c  + \lambda^2) u]_{L_{\infty}  C^{\alpha/3, \alpha}_{x, v} (\bR^{1+2d}_T)} \\
 & \quad +  \lambda^{\alpha}  \|P u + b \cdot D_v u + (c  + \lambda^2) u\|_{L_{\infty}  (\bR^{1+2d}_T)}),
\end{aligned}
\end{equation}
where $N =  N (d, \alpha, K)$.

$(iii)$ For any $\lambda \ge \lambda_0$ (see the assertion $(i)$) and $f \in  L_{\infty}  C^{\alpha/3, \alpha}_{x, v} (\bR^{1+2d}_T)$, Eq. \eqref{1.1}.
has a unique solution $u \in \kC^{2, \alpha} (\bR^{1+2d}_T)$.

$(iv)$ For any finite $S < T$ and $f \in   L_{\infty}  C^{\alpha/3, \alpha}_{x, v} ((S, T) \times \bR^{2d})$,
the Cauchy problem
\begin{equation}
    \label{eq1.5.4}
      P u + b \cdot D_v u + (c + \lambda^2) u = f, \quad u (0, \cdot) \equiv 0
\end{equation}
has a unique solution $u \in \kC^{2, \alpha} ((S, T) \times \bR^{2d})$, and, furthermore,
\begin{align*}
   & \|u\|+ \|D_v u\| + \|D^2_v u\| + \|(-\Delta_x)^{1/3} u\| + \|\partial_t u - v \cdot D_x u\|_{L_{\infty}  C^{\alpha/3, \alpha}_{x, v} (\bR^{1+2d}_T)} \\
   &\le N  \delta^{-\theta} \|f\|_{ L_{\infty}  C^{\alpha/3, \alpha}_{x, v}  ((S, T) \times \bR^{2d})},
\end{align*}
where $\|\,\cdot\,\| = \|\,\cdot\,\|_{ \kC^{\alpha} ((S, T) \times \bR^{2d}) }$, and $N = N (d, \alpha, K, L, T-S)$.
\end{theorem}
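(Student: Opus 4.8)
The plan is to prove assertion $(i)$ by Campanato's perturbation scheme and then to deduce $(ii)$--$(iv)$ from it by a kinetic dilation, the maximum principle, the method of continuity, and an extension/restriction argument. The only equation whose regularity theory enters directly is the \emph{model operator}
\[
  P_0 w := \partial_t w - v\cdot D_x w - \bar a^{ij}(t)\,D_{v_iv_j}w,
\]
whose leading coefficient depends on $t$ alone and still obeys Assumptions \ref{assumption 1.1}--\ref{assumption 1.2}; no fundamental solution of $P$ is used. First I would establish an \emph{interior estimate for $P_0$}: if $P_0 w=0$ on a kinetic cylinder $Q_R(z_0)$ (adapted to the quasi-distance $\rho$ of \eqref{1.3}), then on $Q_{R/2}(z_0)$ all $x,v$-derivatives of $w$ are controlled by $\|w\|_{L_2(Q_R(z_0))}$, with constant $N(d)\delta^{-\theta}$ and the power of $R$ dictated by scaling. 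This follows from a kinetic Caccioppoli inequality combined with the fact that, since $\bar a$ is independent of $(x,v)$, every tangential derivative $D_x^{\beta}D_v^{\gamma}w$ again solves an equation $P_0(\,\cdot\,)=(\text{lower-order }x,v\text{-derivatives of }w)$, the right-hand side coming from the commutator $[D_{v_i},\,v\cdot D_x]=D_{x_i}$; one then bootstraps Caccioppoli through a kinetic Sobolev embedding. In particular $D_v^2 w$, $(-\Delta_x)^{1/3}w$ --- which also solves $P_0(\,\cdot\,)=0$, since $(-\Delta_x)^{1/3}$ commutes with $P_0$ --- and $\partial_t w-v\cdot D_x w=\bar a^{ij}(t)D_v^2 w$ are bounded near $z_0$, and, differentiating the equation and integrating in $t$ in the spirit of Krylov's treatment of parabolic equations with $t$-measurable coefficients, $D_v^2 w$ and $(-\Delta_x)^{1/3}w$ are in fact Lipschitz in $\rho$ on $Q_{R/2}(z_0)$ while $\partial_t w-v\cdot D_x w\in L_\infty C^{\alpha/3,\alpha}_{x,v}$. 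The upshot is the decay estimate that drives everything: for $g\in\{D_v^2 w,\,(-\Delta_x)^{1/3}w\}$ and $r\le R$,
\[
  \fint_{Q_r(z_0)}\big|g-(g)_{Q_r(z_0)}\big|^2
  \le N\,(r/R)^{2}\fint_{Q_R(z_0)}\big|g-(g)_{Q_R(z_0)}\big|^2,
\]
together with the analogous $L_\infty C^{\alpha/3,\alpha}_{x,v}$-oscillation decay for $\partial_t w-v\cdot D_x w$; the exponent $2$ exceeds $\alpha$, which is what makes the iteration converge.

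Next, given a general $u\in\kC^{2,\alpha}(\bR^{1+2d}_T)$ solving $Pu=f-b\cdot D_vu-cu$, I would freeze and iterate. On each cylinder $Q_r(z_1)$ write $u=w+h$, where $P_{z_1}w=0$, with $P_{z_1}$ the model operator obtained by freezing $a$ \emph{along the characteristic} through $z_1$, namely $\bar a^{ij}(t):=a^{ij}\big(t,\,x_1-(t-t_1)v_1,\,v_1\big)$ (a function of $t$ only), and $h=u-w$ vanishes on the parabolic boundary of $Q_r(z_1)$. Then
\[
  P_{z_1}h=\big(f-b\cdot D_vu-cu\big)+\big(a^{ij}(z)-\bar a^{ij}(t)\big)D_{v_iv_j}u,
\]
and on $Q_r(z_1)$ the last term has size $\lesssim K\delta^{-1}r^{\alpha}[D_v^2u]_{\kC^{\alpha}}$ by Assumption \ref{assumption 1.2} and the geometry of $Q_r(z_1)$. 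Combining the decay estimate for $w$ with an energy bound for $h$ gives, for each of $g\in\{D_v^2u,\,(-\Delta_x)^{1/3}u\}$, a Morrey-type inequality
\[
  \Phi(r,z_1)\le N\,(r/R)^{2}\,\Phi(R,z_1)+N\,r^{\alpha}\big([f]+[b\cdot D_vu+cu]+[D_v^2u]_{\kC^{\alpha}}\big),
\]
with $\Phi$ the corresponding ($L_2$- or $L_\infty C^{\alpha/3,\alpha}_{x,v}$-) mean oscillation of $g$ and $[\,\cdot\,]$ the relevant seminorm; the standard iteration lemma then bounds $[g]_{\kC^{\alpha}}$ by Campanato's characterization of the kinetic H\"older spaces, the unit-scale oscillation being absorbed via the interpolation $\|g\|_{L_\infty}\le\epsilon[g]_{\kC^{\alpha}}+N(\epsilon)\|u\|_{L_\infty}$, so that only $\|u\|_{L_\infty}$ survives on the right. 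The bound on $[\partial_tu-v\cdot D_xu]_{L_\infty C^{\alpha/3,\alpha}_{x,v}}$ is then immediate from $\partial_tu-v\cdot D_xu=f-b\cdot D_vu-cu+a^{ij}D_v^2u$, and $\sup_{t,v}[u(t,\cdot,v)]_{C^{(2+\alpha)/3}(\bR^d)}$ follows from the bounds on $[(-\Delta_x)^{1/3}u]_{\kC^{\alpha}}$ and $\|u\|_{L_\infty}$ by the elliptic Schauder estimate for $(-\Delta_x)^{1/3}$ on each time slice (note $2/3<(2+\alpha)/3<1$). Finally $[b\cdot D_vu+cu]\le L\big([D_vu]_{\kC^{\alpha}}+\|D_vu\|_{L_\infty}+[u]_{\kC^{\alpha}}+\|u\|_{L_\infty}\big)$ is handled by interpolating between $[D_v^2u]_{\kC^{\alpha}}$ and $\|u\|_{L_\infty}$ and reabsorbing, which is precisely the step that makes $N$ depend on $L$ in $(i)$.

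For $(ii)$ I would apply the kinetic dilation $u_\lambda(t,x,v):=u(\lambda^{-2}t,\lambda^{-3}x,\lambda^{-1}v)$, which replaces $c+\lambda^2$ by $1+\lambda^{-2}\tilde c$ and $b$ by $\tilde b=\lambda^{-1}b(\lambda^{-2}\cdot)$, with $\|\tilde b\|_{L_\infty C^{\alpha/3,\alpha}_{x,v}}+\|\lambda^{-2}\tilde c\|_{L_\infty C^{\alpha/3,\alpha}_{x,v}}\lesssim\lambda^{-1}L$ for $\lambda\ge1$, the rescaled leading coefficient still obeying Assumptions \ref{assumption 1.1}--\ref{assumption 1.2}. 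Applying $(i)$ to $u_\lambda$ with the $\tilde b,\tilde c$-terms moved to the right-hand side and then absorbed --- legitimate once $\lambda\ge\lambda_0=\delta^{-\theta}\widetilde\lambda_0(d,\alpha,K,L)$ --- and undoing the dilation, under which $\rho$ scales by $\lambda^{-1}$ while $D_v^2$, $(-\Delta_x)^{1/3}$, $\partial_t-v\cdot D_x$ all scale by $\lambda^{-2}$, yields the seminorm part of \eqref{eq1.5.1} with right-hand side $N(d,\alpha,K)\delta^{-\theta}\big([f]_{L_\infty C^{\alpha/3,\alpha}_{x,v}}+\lambda^{2+\alpha}\|u\|_{L_\infty}\big)$; the constant no longer depends on $L$ because the lower-order terms were moved to the right instead of being absorbed with an $L$-dependent constant. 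The maximum principle gives $\|u\|_{L_\infty}\le N\lambda^{-2}\|f\|_{L_\infty}$ once $\lambda^2\ge2\|c\|_{L_\infty}$, so $\lambda^{2+\alpha}\|u\|_{L_\infty}\le N\lambda^{\alpha}\|f\|_{L_\infty}$; this closes the estimate and supplies the term $\lambda^{2+\alpha}\|u\|_{L_\infty}$ on the left, while the remaining weighted $L_\infty$-norms and lower-order seminorms in \eqref{eq1.5.1} come from kinetic interpolation with the interpolation parameter taken as a suitable negative power of $\lambda$. For $(iii)$, run the method of continuity along $P_s:=\partial_t-v\cdot D_x-\big[(1-s)\delta\Delta_v+s\,a^{ij}D_{v_iv_j}\big]+s(b\cdot D_v+c)+\lambda^2$, $s\in[0,1]$: each $P_s$ satisfies Assumptions \ref{assumption 1.1}--\ref{assumption 1.2} with the same $\delta,K$, so $(ii)$ holds uniformly in $s$; at $s=0$ the equation $\partial_tu-v\cdot D_xu-\delta\Delta_vu+\lambda^2u=f$ is uniquely solvable in $\kC^{2,\alpha}(\bR^{1+2d}_T)$ (via the $L_p$ theory of \cite{DY_21a,DY_21b} and kinetic Sobolev embedding, or directly), and uniqueness throughout is contained in $(ii)$, whence $P_1$ is onto. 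For $(iv)$, extend $f$ by $0$ to $t$ below the initial time, solve on $\bR^{1+2d}_T$ by $(iii)$, and use $(ii)$ on the lower half-line to conclude that the solution vanishes there; restricting to $(S,T)$ gives the solution of the Cauchy problem, and the estimate with a constant independent of $\lambda$ but depending on $T-S$ is obtained from $(ii)$ together with the maximum principle --- applied to $u$ and to its spatial difference quotients --- to remove the residual powers of $\lambda$ from the lower-order norms.

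The main obstacle is the interior estimate for $P_0$: proving, \emph{without} the Gaussian kernel, that a solution of $P_0w=0$ with merely measurable-in-$t$ leading coefficient gains two full kinetic derivatives and that $D_v^2w$ and $(-\Delta_x)^{1/3}w$ are H\"older --- indeed Lipschitz --- in the kinetic quasi-distance $\rho$, including in the time direction, where the coefficient carries no regularity at all. This forces one to use simultaneously that $x,v$-differentiation preserves the model equation, a robust kinetic Caccioppoli--Sobolev machinery, the hypoelliptic gain of $1/3$ of a derivative in $x$ (for the nonlocal operator $(-\Delta_x)^{1/3}$ this is handled by a local-plus-tail decomposition, since the operator is global in $x$), and a Krylov-type time-integration of the differentiated equation. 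Keeping the dependence on the ellipticity constant $\delta$ polynomial, i.e.\ of the form $\delta^{-\theta}$, at every step is the other delicate bookkeeping point --- and it is exactly what makes the result applicable to the Landau linearization near the global Maxwellian.
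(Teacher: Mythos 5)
Your high-level plan (a kernel-free Campanato iteration built on oscillation-decay estimates for the time-frozen model operator $P_0$) coincides with the paper's, but the central decomposition step has a gap that would break the iteration. You write $u = w + h$ on $Q_r(z_1)$ with $P_{z_1}w = 0$ and $h$ vanishing on the parabolic boundary, so that $P_{z_1}h = (f - b\cdot D_vu - cu) + (a^{ij}-\bar a^{ij})D_{v_iv_j}u$, and you then assert the Morrey-type inequality $\Phi(r)\le N(r/R)^2\Phi(R) + Nr^\alpha\big([f] + [b\cdot D_vu + cu] + [D_v^2u]_{\kC^\alpha}\big)$. The frozen-coefficient correction $(a-\bar a)D_v^2 u$ indeed has size $\lesssim r^\alpha$, but the summand $f - b\cdot D_vu - cu$ does not: the energy estimate for $h$ produces its full $L_\infty$-norm on the larger cylinder, not an $r^\alpha$-multiple of its seminorm, so the $r^\alpha$ on the right-hand side is unjustified. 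The usual fix --- subtract the center value $f(z_1)$ before solving for $w$ --- is unavailable here, because $f \in L_\infty C^{\alpha/3,\alpha}_{x,v}$ is only measurable in $t$: on $Q_r(z_1)$ the time variable ranges over an interval of length $r^2$ across which $f$ can jump by $O(1)$. The paper's essential technical move (Section \ref{section 1.3} and Proposition \ref{proposition 3.3}) is to subtract the $t$-dependent function $\chi(t) := f(t, x_1 - (t-t_1)v_1, v_1)$, that is, $f$ frozen \emph{along the characteristic} through $z_1$; the geometry of the kinetic cylinder then gives $\|f - \chi\|_{L_\infty(Q_r(z_1))}\lesssim r^\alpha[f]_{L_\infty C^{\alpha/3,\alpha}_{x,v}}$. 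Correspondingly, the caloric part satisfies $P_0 u_c = \chi(t)$ (not $0$), and the remainder solves a Cauchy problem with right-hand side $(f - \chi)\phi$ rather than a boundary-value problem; since $\chi$ depends only on $t$, the decay estimates for $u_c$ are unchanged (every $x,v$-derivative of $u_c$ still solves $P_0(\cdot) = 0$), while the remainder acquires the smallness you need. You do freeze the coefficient $a$ along the characteristic, but not the data $f$, and it is precisely the data where the time-irregularity defeats your scheme.

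A few secondary points. Your Morrey inequality treats $D_v^2 u$ and $(-\Delta_x)^{1/3}u$ as decoupled, whereas in the paper (Proposition \ref{proposition 3.3} via Lemma \ref{lemma 3.5}) the $D_v^2$-oscillation over $Q_r$ is controlled by a dyadic sum of $(-\Delta_x)^{1/3}$-oscillations over stretched cylinders $Q_{\nu r, 2^k\nu r}$, reflecting the nonlocal tail; one must first iterate the $(-\Delta_x)^{1/3}$ estimate to $\kC^\alpha$ and feed it back into the $D_v^2$ iteration (Lemma \ref{lemma 4.1}). Your route from $(i)$ to $(ii)$ --- kinetic dilation plus the maximum principle --- genuinely differs from the paper's, which uses an Agmon-type trick with an added dummy velocity variable and a $\cos(\lambda v_{d+1}+\pi/4)$ factor (Lemma \ref{lemma 4.2}); both are in principle viable, but the dilation bookkeeping for the full list of weighted norms in \eqref{eq1.5.1} is delicate and should be carried out explicitly. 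Finally, the paper proves $(iii)$ by mollifying the coefficients and data and passing to the limit (Arzel\`a--Ascoli/compactness), and only afterwards obtains $(ii)$ as an a priori bound for the solution so constructed and $(i)$ as a separate refinement; your method-of-continuity route to $(iii)$ presupposes $\kC^{2,\alpha}$-solvability of the constant-coefficient endpoint, which is not immediate from the $L_p$ theory and must itself be argued.
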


\begin{remark}
        \label{remark 1.8}
In the case when $a^{i j}= a^{i j} (t)$ and $b \equiv 0, c \equiv 0$, by a scaling argument (see Lemma \ref{lemma 2.0}), we conclude that \eqref{eq1.5.1} holds for any $\lambda > 0$.
By using a compactness argument as in the proof of Theorem  \ref{theorem 1.5}, one can show that the assertion $(iii)$ of the above theorem is also valid for any $\lambda > 0$ in that case.
\end{remark}

\begin{corollary}[Kinetic interpolation inequalities]
            \label{corollary 1.6}
 For any $u \in \kC^{2, \alpha} (\bR^{1+2d}_T)$,
 $D_v u \in \kC^{\alpha} (\bR^{1+2d}_T)$, and, furthermore, for any $\varepsilon > 0$,
\begin{align*}
&  [u]_{ \kC^{\alpha}  (\bR^{1+2d}_T) } \le N  \varepsilon^2 ([\partial_t u - v \cdot D_x u]_{ L_{\infty} C^{\alpha/3, \alpha}_{x, v} (\bR^{1+2d}_T)} + [D^2_v u]_{ L_{\infty} C^{\alpha/3, \alpha}_{x, v} (\bR^{1+2d}_T)})\\
&\quad+ N \varepsilon^{2-\alpha} \|\partial_t u - v \cdot D_x u\|_{ L_{\infty}  (\bR^{1+2d}_T) } + N  \varepsilon^{-\alpha}\|u\|_{ L_{\infty}  (\bR^{1+2d}_T) },\\
 &  [D_v u]_{ \kC^{\alpha}  (\bR^{1+2d}_T) } \le N  \varepsilon ([\partial_t u - v \cdot D_x u]_{ L_{\infty} C^{\alpha/3, \alpha}_{x, v} (\bR^{1+2d}_T)} + [D^2_v u]_{ L_{\infty} C^{\alpha/3, \alpha}_{x, v} (\bR^{1+2d}_T)})\\
&\quad+ N \varepsilon^{1-\alpha} \|\partial_t u - v \cdot D_x u\|_{ L_{\infty}  (\bR^{1+2d}_T) } + N  \varepsilon^{-1-\alpha}\|u\|_{ L_{\infty}  (\bR^{1+2d}_T) },
\end{align*}
where $N = N (d, \alpha)$.
\end{corollary}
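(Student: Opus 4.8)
Throughout put $g:=\partial_t u-v\cdot D_xu$ and
\[
A:=[g]_{L_\infty C^{\alpha/3,\alpha}_{x,v}(\bR^{1+2d}_T)}+[D^2_vu]_{L_\infty C^{\alpha/3,\alpha}_{x,v}(\bR^{1+2d}_T)},\quad B:=\|g\|_{L_\infty(\bR^{1+2d}_T)},\quad E:=\|u\|_{L_\infty(\bR^{1+2d}_T)}.
\]
The plan is to reduce to a single value of $\varepsilon$ and then to run a Campanato-style near/far dichotomy based on a kinetic second-order expansion. For the reduction, a mollification of $u$ in the $(t,x)$ variables commutes with the transport field $\partial_t-v\cdot D_x$ and does not increase any of the seminorms in the statement, so I may assume $u$ is smooth in $(t,x)$ and $C^2$ in $v$; applying the (still to be proved) inequalities to the kinetic dilations $u_\delta(z):=u(\delta^2t,\delta^3x,\delta v)$ — which map the class of slabs $\bR^{1+2d}_T$ into itself, cf.\ Lemma~\ref{lemma 2.0} — shows that each of the two inequalities is invariant under $\varepsilon\mapsto\delta\varepsilon$. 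Hence it suffices to prove, with $N=N(d,\alpha)$,
\[
[u]_{\kC^\alpha(\bR^{1+2d}_T)}+[D_vu]_{\kC^\alpha(\bR^{1+2d}_T)}\le N(A+B+E),
\]
i.e.\ the case $\varepsilon=1$; the general $\varepsilon$, with the exponents $\varepsilon^2,\varepsilon^{2-\alpha},\varepsilon^{-\alpha}$ and $\varepsilon,\varepsilon^{1-\alpha},\varepsilon^{-1-\alpha}$, then drops out by scaling back.

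The main tool is the kinetic second-order expansion of $\kC^{2,\alpha}$ functions. At $z_0=(t_0,x_0,v_0)$ set
\[
\mathfrak p_{z_0}(z):=u(z_0)+D_vu(z_0)\cdot(v-v_0)+\tfrac12(v-v_0)^\top D^2_vu(z_0)(v-v_0)+g(z_0)\,(t-t_0),
\]
the unique kinetic polynomial of degree $\le 2$ matching $u,D_vu,D^2_vu,g$ at $z_0$; it obeys $(\partial_t-v\cdot D_x)\mathfrak p_{z_0}\equiv g(z_0)$ and $D^2_v\mathfrak p_{z_0}\equiv D^2_vu(z_0)$. Integrating $g$ along the characteristics of $\partial_t-v\cdot D_x$ and combining this with ordinary Taylor expansions in $v$ (and the kinetic $C^{(2+\alpha)/3}$-in-$x$ regularity) one obtains, for $z$ in a unit kinetic cylinder about $z_0$ (one-sided in time when $t_0$ is near $T$),
\[
|u(z)-\mathfrak p_{z_0}(z)|\le N\big(A\,\widehat\rho(z,z_0)^{2+\alpha}+B\,|t-t_0|\big),\qquad |D_vu(z)-D_v\mathfrak p_{z_0}(z)|\le N\big(A\,\widehat\rho(z,z_0)^{1+\alpha}+B\,\widehat\rho(z,z_0)\big),
\]
the $B$-terms (carrying a lower power of $\widehat\rho$) reflecting that only $L_\infty$-in-time, not H\"older-in-time, control of $g$ is available. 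Freezing $(t_0,x_0)$ in the first bound kills the transport term, and finite differences in $v$ then give the Landau--Kolmogorov-type bounds $\|D_vu\|_{L_\infty}+\|D^2_vu\|_{L_\infty}\le N(E+A)$ at the unit scale; the general-scale versions, recovered by scaling, are what produce the negative powers of $\varepsilon$.

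Now the dichotomy. Fix $z_1\ne z_2$, set $r:=\widehat\rho(z_1,z_2)$ and — using Lemma~\ref{lemma B.1}$(i)$, so that $\widehat\rho$ may replace $\rho$ in \eqref{1.5} — bound $|u(z_1)-u(z_2)|$ and $|D_vu(z_1)-D_vu(z_2)|$ by $N(A+B+E)r^\alpha$. If $r\ge1$: $|u(z_1)-u(z_2)|\le2E\le 2Er^\alpha$ and $|D_vu(z_1)-D_vu(z_2)|\le 2\|D_vu\|_{L_\infty}\le N(A+E)r^\alpha$ by the Landau--Kolmogorov bound. If $r<1$: write $u(z_1)-u(z_2)=\big(\mathfrak p_{z_1}(z_1)-\mathfrak p_{z_1}(z_2)\big)-\big(u(z_2)-\mathfrak p_{z_1}(z_2)\big)$, where $|u(z_2)-\mathfrak p_{z_1}(z_2)|\le N(Ar^{2+\alpha}+Br^2)$ by the expansion (note $|t_1-t_2|\le\rho(z_1,z_2)^2\le r^2$), while $\mathfrak p_{z_1}(z_1)=u(z_1)$ and $|v_1-v_2|\le r$ give $|\mathfrak p_{z_1}(z_1)-\mathfrak p_{z_1}(z_2)|\le\|D_vu\|_{L_\infty}r+\|D^2_vu\|_{L_\infty}r^2+Br^2$; inserting the Landau--Kolmogorov bounds and $r<1$ yields $|u(z_1)-u(z_2)|\le N(A+B+E)r\le N(A+B+E)r^\alpha$. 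The identical argument with $D_v\mathfrak p_{z_1}(z_1)-D_v\mathfrak p_{z_1}(z_2)=-D^2_vu(z_1)(v_2-v_1)$ and $|D_vu(z_2)-D_v\mathfrak p_{z_1}(z_2)|\le N(Ar^{1+\alpha}+Br)$ gives $|D_vu(z_1)-D_vu(z_2)|\le N(A+B+E)r^\alpha$. This proves the $\varepsilon=1$ inequalities, hence — by scaling — Corollary~\ref{corollary 1.6}; and $D_vu\in\kC^\alpha(\bR^{1+2d}_T)$ since $\|D_vu\|_{L_\infty}<\infty$ and $[D_vu]_{\kC^\alpha}<\infty$ by the second inequality with any fixed $\varepsilon$.

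The one genuinely nontrivial step — essentially the crux — is the second-order expansion above, with a remainder controlled only by $A$ and $B$ and, crucially, \emph{not} by $[u]_{\kC^\alpha}$ nor by derivatives of $g$: the $v$- and mixed directions are treated by Taylor expansion in $v$ and by the kinetic $C^{(2+\alpha)/3}$-in-$x$ regularity, whereas the transport direction must be handled by integrating $g$ along characteristics, where only the $L_\infty$-bound $B$ is at hand; this is exactly the kinetic-Schauder bookkeeping developed earlier in the paper, and it is also where the one-sided-in-time version near $t=T$ is needed. Everything downstream of it is elementary interpolation and tracking of powers of $r$ and $\varepsilon$.
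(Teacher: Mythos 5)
Your approach is genuinely different from the paper's, but it has a gap at the step you yourself call ``the crux.'' The paper proves the corollary in a few lines: apply the a priori estimate \eqref{eq1.5.1} of Theorem~\ref{theorem 1.5}~$(ii)$ with $a^{ij}=\delta_{ij}$, $b\equiv 0$, $c\equiv 0$, and $\lambda=1$ (valid for \emph{any} $\lambda>0$ by Remark~\ref{remark 1.8}), which already yields
$[u]_{\kC^{\alpha}}+[D_vu]_{\kC^{\alpha}}+[D^2_vu]_{\kC^{\alpha}}+\sup_{(t,v)}[u(t,\cdot,v)]_{C^{(2+\alpha)/3}_x}\le N(\|g\|+\|u\|+\|\Delta_vu\|)$ with $\|\cdot\|=\|\cdot\|_{L_\infty C^{\alpha/3,\alpha}_{x,v}}$, and then interpolates $\|u\|$ and $\|\Delta_vu\|$ against $\|u\|_{L_\infty}$ in the $v$-variable via Lemma~\ref{lemma B.3}; the $\varepsilon$-dependence is then a rescaling. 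Your proof instead posits a kinetic second-order expansion $|u(z)-\mathfrak p_{z_0}(z)|\le N(A\,\widehat\rho^{2+\alpha}+B|t-t_0|)$ and runs a Campanato-style near/far dichotomy on top of it.

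The gap is that this expansion is not established and is, in effect, equivalent to the hard part of what it is supposed to replace. Specializing it to $z=(t_0,x,v_0)$, $z_0=(t_0,x_0,v_0)$ — same $t$, same $v$, different $x$ — the polynomial $\mathfrak p_{z_0}$ is constant and $|t-t_0|=0$, so the claimed bound reads
$|u(t_0,x,v_0)-u(t_0,x_0,v_0)|\le NA\,|x-x_0|^{(2+\alpha)/3}$.
This is exactly the hypoelliptic $C^{(2+\alpha)/3}$-in-$x$ regularity: it is \emph{not} part of the definition of $\kC^{2,\alpha}$, and it cannot be extracted from $A,B,E$ by integrating $g$ along characteristics and doing Taylor expansion in $v$, because every such path leaves a residual displacement of size $O(\widehat\rho^3)$ in the $x$-direction that cannot be closed without an $x$-regularity estimate on $u$ beyond the $C^{\alpha/3}$-in-$x$ that $\kC^\alpha$ trivially provides. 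You parenthetically invoke ``the kinetic $C^{(2+\alpha)/3}$-in-$x$ regularity'' and ``the kinetic-Schauder bookkeeping developed earlier in the paper,'' but the only source of that regularity in the paper is Theorem~\ref{theorem 1.5} itself; once you invoke it, the entire expansion-and-dichotomy machinery is redundant, since \eqref{eq1.5.1} applied to the constant-coefficient operator already bounds $[u]_{\kC^\alpha}$ and $[D_vu]_{\kC^\alpha}$ directly. So the proposal either has an unfilled hypoelliptic estimate at its core, or it reduces to the paper's argument by an unnecessarily long detour. The remaining pieces of your write-up (the reduction to $\varepsilon=1$ by kinetic scaling, the Landau–Kolmogorov interpolation $\|D_vu\|_{L_\infty}+\|D^2_vu\|_{L_\infty}\le N(A+E)$ via one-variable H\"older interpolation in $v$, and the $r\ge1$/$r<1$ split) are fine once the expansion is granted.
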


It is easy to see that $\kC^{2, \alpha} (G) \subset  \mathbb{C}^{2, \alpha}(G)$ for an open set $G$ of type \eqref{eq1.12}. The following corollary is concerned with the opposite inclusion.
\begin{corollary}['Equivalence' of $\mathbb{C}^{2, \alpha}$ and $\kC^{2, \alpha}$]
\label{corollary 1.8}
 $(i)$ For any $u \in \mathbb{C}^{2, \alpha} (\bR^{1+2d}_T)$, one has $u \in \kC^{2, \alpha} (\bR^{1+2d}_T)$, and, in addition,
  \begin{align*}
       & [D^2_v u]_{\kC^{\alpha} (\bR^{1+2d}_T)}  \\
        &\le N (d, \alpha) ([\partial_t u - v \cdot D_x u]_{ L_{\infty} C^{\alpha/3, \alpha}_{x, v} (\bR^{1+2d}_T)} + [\Delta_v u]_{ L_{\infty} C^{\alpha/3, \alpha}_{x, v} (\bR^{1+2d}_T)}).
        \end{align*}

  $(ii)$ Let $R>0$. If $u \in \mathbb{C}^{2, \alpha} (Q_{R})$, then, for any $r \in (0, R)$,  $u \in \kC^{2, \alpha} (Q_{r})$, and
 $$
       [D^2_v u]_{\kC^{\alpha} (Q_r)}
  \le N (d, \alpha, r, R)  \|u\|_{ \mathbb{C}^{2, \alpha} (Q_{R})}.
  $$
\end{corollary}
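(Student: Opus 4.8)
I would deduce both parts from Theorem~\ref{theorem 1.5}, viewing a $\mathbb{C}^{2,\alpha}$ function as a distributional solution of a KFP equation with source in $L_{\infty} C^{\alpha/3, \alpha}_{x, v}$. For $(i)$: given $u\in\mathbb{C}^{2,\alpha}(\bR^{1+2d}_T)$, set $g:=\partial_t u-v\cdot D_x u\in L_{\infty} C^{\alpha/3, \alpha}_{x, v}(\bR^{1+2d}_T)$; then $u$ solves $\partial_t u-v\cdot D_x u-\Delta_v u=f$ with $f:=g-\Delta_v u\in L_{\infty} C^{\alpha/3, \alpha}_{x, v}(\bR^{1+2d}_T)$, i.e.\ \eqref{1.1} with $a=I$, $b\equiv c\equiv0$, $\lambda=0$, for which Assumptions~\ref{assumption 1.1}--\ref{assumption 1.3} hold with $\delta=1$, $K=L=1$. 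That $u\in\kC^{\alpha}(\bR^{1+2d}_T)$ is elementary and uses only the definition of $\mathbb{C}^{2,\alpha}$: routing $u(z_0)\to u(t,x_0-(t-t_0)v_0,v_0)\to u(z)$ for $z=(t,x,v),z_0=(t_0,x_0,v_0)$, the first increment is $\le\|g\|_{L_\infty}|t-t_0|\le\|g\|_{L_\infty}\rho^2(z,z_0)$ (integrate $g$ along the characteristic $s\mapsto(s,x_0-(s-t_0)v_0,v_0)$), and the second is a same-time $(x,v)$-increment over a displacement $\lesssim\rho(z,z_0)$, hence $\le[u]_{L_{\infty} C^{\alpha/3, \alpha}_{x, v}}\rho^{\alpha}(z,z_0)$; since $\alpha<1$ this yields $[u]_{\kC^{\alpha}}\le N(\|u\|_{L_\infty}+\|g\|_{L_\infty}+[u]_{L_{\infty} C^{\alpha/3, \alpha}_{x, v}})$. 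The genuine content is the kinetic regularity of $D^2_v u$, and for this I would use the equation: fixing $\lambda=\lambda_0(d,\alpha)$ as in Theorem~\ref{theorem 1.5} and noting $f+\lambda^2u\in L_{\infty} C^{\alpha/3, \alpha}_{x, v}(\bR^{1+2d}_T)$, part~$(iii)$ supplies $\bar u\in\kC^{2,\alpha}(\bR^{1+2d}_T)$ with $\partial_t\bar u-v\cdot D_x\bar u-\Delta_v\bar u+\lambda^2\bar u=f+\lambda^2u$, so $w:=u-\bar u$ is a bounded solution of $\partial_t w-v\cdot D_x w-\Delta_v w+\lambda^2 w=0$ on $\bR^{1+2d}_T$. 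One then wants $w\equiv0$, which gives $u=\bar u\in\kC^{2,\alpha}$; the cleanest way is to note that $e^{\lambda^2 t}w$ is a bounded ancient solution of the \emph{homogeneous} KFP equation and to invoke the comparison principle on $(S,t_1)\times\bR^{2d}$, letting $S\to-\infty$. \emph{I expect this identification step to be the main obstacle}: the uniqueness in Theorem~\ref{theorem 1.5}$(iii)$ is internal to $\kC^{2,\alpha}$, while $u$ is a priori only in $\mathbb{C}^{2,\alpha}$, so one genuinely needs a Tychonoff-type uniqueness for the Cauchy problem of the Kolmogorov operator (whose drift is unbounded). This can also be avoided by running the a priori estimate on the kinetic mollifications $u^{\varepsilon}$ of $u$, which are smooth, belong to $\kC^{2,\alpha}$, solve the same equation with a source $f^{\varepsilon}$ obeying $[f^{\varepsilon}]_{L_{\infty} C^{\alpha/3, \alpha}_{x, v}}\le N[f]_{L_{\infty} C^{\alpha/3, \alpha}_{x, v}}$ uniformly in $\varepsilon\le1$, and then letting $\varepsilon\to0$ by lower semicontinuity of $[\,\cdot\,]_{\kC^{\alpha}}$.

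With $u\in\kC^{2,\alpha}(\bR^{1+2d}_T)$ in hand, Theorem~\ref{theorem 1.5}$(i)$ (with $\delta=1$ and $[f]_{L_{\infty} C^{\alpha/3, \alpha}_{x, v}}\le[g]_{L_{\infty} C^{\alpha/3, \alpha}_{x, v}}+[\Delta_v u]_{L_{\infty} C^{\alpha/3, \alpha}_{x, v}}$) gives $[D^2_v u]_{\kC^{\alpha}}\le N(d,\alpha)\big([g]_{L_{\infty} C^{\alpha/3, \alpha}_{x, v}}+[\Delta_v u]_{L_{\infty} C^{\alpha/3, \alpha}_{x, v}}+\|u\|_{L_\infty}\big)$. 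The $\|u\|_{L_\infty}$ term is removed by homogeneity: apply this bound, whose constant does not depend on the time interval, to $u_r(t,x,v):=u(r^2 t,r^3 x,rv)\in\kC^{2,\alpha}(\bR^{1+2d}_{T/r^2})$; by \eqref{1.3} the quasi-distance $\rho$ scales by $r$ under $(t,x,v)\mapsto(r^2 t,r^3 x,rv)$, so $[D^2_v u_r]_{\kC^{\alpha}}$, $[\partial_t u_r-v\cdot D_x u_r]_{L_{\infty} C^{\alpha/3, \alpha}_{x, v}}$ and $[\Delta_v u_r]_{L_{\infty} C^{\alpha/3, \alpha}_{x, v}}$ each carry a factor $r^{2+\alpha}$ while $\|u_r\|_{L_\infty}=\|u\|_{L_\infty}$; dividing by $r^{2+\alpha}$ and letting $r\to\infty$ finishes $(i)$ (this is essentially the scaling of Lemma~\ref{lemma 2.0}).

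For $(ii)$ I would localize. Fix $r<r'<R$ and a smooth cutoff $\eta$ adapted to the kinetic dilations with $\eta\equiv1$ on $Q_r$, $\operatorname{supp}\eta\subset Q_{r'}$, and apply $(i)$ to $w:=\eta u$ extended by zero. Then $w\in\mathbb{C}^{2,\alpha}(\bR^{1+2d})$ with $\|w\|_{\mathbb{C}^{2,\alpha}}\le N(\eta)\|u\|_{\mathbb{C}^{2,\alpha}(Q_R)}$, because $\partial_t w-v\cdot D_x w=\eta(\partial_t u-v\cdot D_x u)+u(\partial_t\eta-v\cdot D_x\eta)$ and $\Delta_v w=\eta\Delta_v u+2D_v\eta\cdot D_v u+u\Delta_v\eta$, and each summand is a product of an $L_{\infty} C^{\alpha/3, \alpha}_{x, v}(Q_R)$ function with a compactly supported smooth one (note $v\cdot D_x\eta$ is smooth and compactly supported). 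Then $(i)$ together with these bounds gives $[D^2_v w]_{\kC^{\alpha}(\bR^{1+2d})}\le N(d,\alpha,r,R)\|u\|_{\mathbb{C}^{2,\alpha}(Q_R)}$, and since $w=u$ and $w\in\kC^{\alpha}$ on $Q_r$ while $\partial_t u-v\cdot D_x u\in L_{\infty} C^{\alpha/3, \alpha}_{x, v}(Q_r)$ by hypothesis, we obtain $u\in\kC^{2,\alpha}(Q_r)$ and $[D^2_v u]_{\kC^{\alpha}(Q_r)}=[D^2_v w]_{\kC^{\alpha}(Q_r)}\le[D^2_v w]_{\kC^{\alpha}(\bR^{1+2d})}$. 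Apart from the uniqueness input flagged above, the remaining pieces — the product estimates, the scaling identities, and the semicontinuity argument — are routine.
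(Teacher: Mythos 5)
Your part (ii) essentially matches the paper's proof (cutoff, apply (i), product rule), and your route for (i) is a valid alternative to the paper's: you invoke the a priori estimate \eqref{eq1.5.2} with $a=I$, $b=c=0$ and then strip the $\|u\|_{L_\infty}$ term by the dilation $u_r(t,x,v)=u(r^2t,r^3x,rv)$, $r\to\infty$, whereas the paper instead applies \eqref{eq1.5.1} and sends $\lambda\to 0$ (licit by Remark~\ref{remark 1.8} in the constant-coefficient case), which kills both the $\|u\|_{L_\infty}$ term and the $\lambda^2 u$ source term at once without any scaling. Both exits give the same inequality. Your elementary verification that $u\in\kC^\alpha(\bR^{1+2d}_T)$ is also fine and is essentially Lemma~\ref{lemma 5.1}, which you could simply have cited.

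The step you flag --- upgrading $u\in\mathbb{C}^{2,\alpha}$ to $u\in\kC^{2,\alpha}$ so that Theorem~\ref{theorem 1.5} $(i)$/$(ii)$ applies at all --- is indeed a gap, and the paper's one-line proof of assertion $(i)$ does not spell it out either; but neither of your proposed fixes is the one this paper supports. A Tychonoff-type uniqueness for bounded ancient solutions of $\partial_t w - v\cdot D_x w - \Delta_v w + \lambda^2 w = 0$ appears nowhere in the paper, and because the drift $v\cdot D_x$ is unbounded this is not a small maximum-principle exercise one can wave at. The mollification route is delicate too: mollifying in $(x,v)$ does not commute with $v\cdot D_x$, the commutator involves $D_x u$, which for $u\in\mathbb{C}^{2,\alpha}$ is only a distribution, and in any case smoothness plus boundedness of $D^2_v u^{\varepsilon}$ does not give $D^2_v u^{\varepsilon}\in\kC^\alpha$ --- that is precisely the point of Remark~\ref{remark 1.1} and the $\sin x$ example, since $\kC^\alpha$ measures oscillation along the characteristics $x+(t-t_0)v_0$, not along $x$. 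The bridge the paper actually has the machinery for is cutoff plus lower semicontinuity: take $u_n=u\phi_n$ with $\phi_n(z)=\phi(t/n^2,x/n^3,v/n)$, $\phi\in C^\infty_0$; then $u_n\in\mathbb{C}^{2,\alpha}(\bR^{1+2d}_T)\cap S_2(\bR^{1+2d}_T)$ (compact support gives $S_2$, the product rule of Remark~\ref{remark 1.4} gives $\mathbb{C}^{2,\alpha}$), so Lemma~\ref{lemma 4.2}, and by Remark~\ref{remark 1.8} the estimate \eqref{eq1.5.1} for every $\lambda>0$, applies to $u_n$; the error terms produced by the derivatives of $\phi_n$ vanish in $L_\infty C^{\alpha/3,\alpha}_{x,v}$ as $n\to\infty$, and sending $n\to\infty$ (lower semicontinuity of the $\kC^\alpha$-seminorm under locally uniform convergence) followed by $\lambda\to 0$ gives the claimed bound and, a posteriori, $u\in\kC^{2,\alpha}$. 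Substituting this step, the rest of your argument closes.
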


\begin{corollary}[Interior Schauder estimate]
            \label{corollary 1.7}
Let $R>0$ and $r\in (0,R)$ be constants.
For any $u \in \kC^{2, \alpha} (Q_{2r})$,
\begin{equation*}
\begin{aligned}
  &\|\partial_t u - v \cdot D_x u\|_{ L_{\infty} C^{\alpha/3, \alpha}_{x, v} (Q_r) } +[u]_{\kC^{\alpha} (Q_r)}+
  [D_v u]_{\kC^{\alpha} (Q_r)} + [D^2_v u]_{\kC^{\alpha} (Q_r)}\\
&\quad  +\sup_{t, v \in (-r^2, 0) \times B_{r}} \|u (t, \cdot, v)\|_{ C^{(2+\alpha)/3} (Q_{r})}\\
  &\le N \delta^{-\theta} (\|P u  + b \cdot D_v u + c u \|_{L_{\infty} C^{\alpha/3, \alpha}_{x, v} (Q_{R})}+ \|u\|_{L_{\infty}  (Q_{R})}),
\end{aligned}
\end{equation*}
where $Q_r  = (-r^2, 0) \times B_{r^3} \times B_r$
and $N = N (d, \alpha, K, L, r, R)$.
\end{corollary}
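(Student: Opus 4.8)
The plan is to derive Corollary \ref{corollary 1.7} from the global a priori estimate \eqref{eq1.5.2} by a standard freezing-and-cutoff localization argument. First I would fix smooth cutoff functions: choose $\eta = \eta(z)$ supported in $Q_{2r}$ with $\eta \equiv 1$ on $Q_r$, and more generally a chain of intermediate cutoffs between nested cylinders $Q_{r_0} \subset Q_{r_1} \subset \cdots \subset Q_R$. The key point is that these cutoffs must be taken adapted to the kinetic scaling, i.e. $|D_v \eta| \le N r^{-1}$, $|D_v^2 \eta| \le N r^{-2}$, and $|\partial_t \eta - v\cdot D_x \eta| \le N r^{-2}$ on the support (this is where the Galilean/parabolic structure matters, and one uses that $x$-support has size $r^3$), together with the corresponding bounds on the kinetic H\"older seminorms of $\eta$. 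Then $w := u\eta \in \kC^{2,\alpha}(\bR^{1+2d}_T)$ after extending the coefficients $a, b, c$ from $Q_R$ to all of $\bR^{1+2d}_T$ preserving Assumptions \ref{assumption 1.1}--\ref{assumption 1.3} (with possibly larger $K, L$ depending on $R$, since extension of $L_\infty C^{\alpha/3,\alpha}_{x,v}$ functions off a cylinder is routine); here I would also absorb the zeroth-order term, noting $cu$ contributes harmlessly.

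Second, I would compute $Pw + b\cdot D_v w + cw$ via the product rule. Writing $P(u\eta) = \eta\, Pu + u(P\eta) - 2a^{ij} D_{v_i}u\, D_{v_j}\eta$ and similarly for the drift term, the right-hand side becomes $\eta f + (\text{commutator terms})$, where $f = Pu + b\cdot D_v u + cu$ and the commutator terms involve only $u$ and $D_v u$ multiplied by derivatives of $\eta$. Applying \eqref{eq1.5.2} to $w$ gives the left-hand side kinetic seminorms of $D_v^2 w$, $(-\Delta_x)^{1/3} w$, $(\partial_t - v\cdot D_x)w$ and the $C^{(2+\alpha)/3}_x$ norm, controlled by $N\delta^{-\theta}$ times the kinetic $C^\alpha$ (plus $L_\infty C^{\alpha/3,\alpha}_{x,v}$) norm of the right-hand side on $\bR^{1+2d}_T$, plus $\|w\|_{L_\infty}$. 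On $Q_r$ the left side equals the desired quantities for $u$; on the right, the commutator terms are estimated by $N(r,R)$ times $\|u\|_{\kC^{\alpha}(Q_{2r})} + \|D_v u\|_{\kC^\alpha(Q_{2r})}$, and $\|w\|_{L_\infty} \le \|u\|_{L_\infty(Q_R)}$.

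The main obstacle is that the right-hand side of the localized estimate still contains the lower-order seminorms $[u]_{\kC^\alpha(Q_{2r})}$ and $[D_v u]_{\kC^\alpha(Q_{2r})}$, which are of the same strength as terms we wish to bound, so they cannot simply be absorbed. The standard fix is an iteration over a geometric sequence of radii combined with the kinetic interpolation inequalities of Corollary \ref{corollary 1.6}: for $\varepsilon$ small, $[u]_{\kC^\alpha}$ and $[D_v u]_{\kC^\alpha}$ on the larger cylinder are bounded by $\varepsilon$ times the top-order seminorms $[D_v^2 u] + [(\partial_t - v\cdot D_x)u]$ plus $C(\varepsilon)\|u\|_{L_\infty}$. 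Applying these on each cylinder in the chain $Q_{r} \subset Q_{r(1+2^{-k})} \subset \cdots \subset Q_{2r}$, choosing $\varepsilon$ proportional to the gap $2^{-k}r$ at each step and summing the resulting geometric series (the standard $S_j \le \tfrac12 S_{j+1} + (\text{bounded terms})$ recursion, or Simon's absorption lemma), lets one absorb the lower-order terms into the left side at the cost of a constant $N(d,\alpha,K,L,r,R)$ and a harmless increase in the $\|u\|_{L_\infty(Q_R)}$ coefficient. Assembling the pieces yields exactly the stated interior estimate; the $C^{(2+\alpha)/3}$ term in the spatial variable is handled identically since it already appears on the left of \eqref{eq1.5.2}.
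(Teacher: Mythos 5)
Your overall plan — cutoff, product rule, iteration over nested cylinders, absorption of lower-order terms — is the right shape and matches the paper's proof at that level. But the specific absorption mechanism you propose has a real gap, and the paper's proof uses a genuinely different device to circumvent it.

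The gap: you propose to close the recursion by applying the kinetic interpolation of Corollary~\ref{corollary 1.6} on each cylinder $Q_{r_n}$ in the chain, bounding $[u]_{\kC^{\alpha}(Q_{r_n})}$ and $[D_v u]_{\kC^{\alpha}(Q_{r_n})}$ by $\varepsilon$ times top-order seminorms plus $C(\varepsilon)\|u\|_{L_\infty}$. But Corollary~\ref{corollary 1.6} is a \emph{global} inequality on $\bR^{1+2d}_T$; no localized version on a kinetic cylinder is proved in the paper, and such a localization is not routine for the kinetic quasi-distance (the standard proof of local interpolation would go through a local a priori estimate, which is precisely Corollary~\ref{corollary 1.7} — a potential circularity). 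Applying Corollary~\ref{corollary 1.6} to the globally defined $w_n = u\zeta_n$ instead would be legitimate but reintroduces the same commutator terms you are trying to absorb, so this does not break the loop either.

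The paper sidesteps this by not using the $\lambda$-free estimate \eqref{eq1.5.2} at all. Instead it applies the $\lambda$-dependent estimate \eqref{eq1.5.1} (Theorem~\ref{theorem 1.5}(ii)) to $u\zeta_n$, which puts $\lambda\,\|D_v(u\zeta_n)\|_{\kC^{\alpha}}$ on the left with a free parameter $\lambda$, while on the right the troublesome $\|D_v u\|$-type commutator term only carries the smaller weight $\lambda^{\alpha}$. Choosing $\lambda=2^{\beta n}\lambda_1$ to grow geometrically in $n$ (with $\beta(1-\alpha)>1+\alpha$ and $\lambda_1$ large), the weighted sum over $n$ then absorbs the $D_v u$ term without any local kinetic interpolation. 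The remaining $\|u\|_{L_\infty C^{\alpha/3,\alpha}_{x,v}(Q_{r_{n+1}})}$ term is handled by Lemma~\ref{lemma B.3}, which is a \emph{classical} per-time-slice H\"older interpolation and is valid locally without issues. So the two ingredients you would need to supply to rescue your approach are exactly the ones the paper's $\lambda$-device avoids needing: a localized kinetic interpolation inequality (for $[D_v u]_{\kC^\alpha}$), and a way to generate $[u]_{\kC^\alpha}$, $[D_v u]_{\kC^\alpha}$ on the left-hand side (which \eqref{eq1.5.2} does not give you but \eqref{eq1.5.1} does). As written, your proposal assumes both without justification.
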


\subsection{Related works}
                \label{section 1.4}
In this section, we give a brief overview of the literature related to the Schauder estimates for the second-order   nondegenerate parabolic equations and    KFP  equations.

\textit{Classical Schauder estimates.}
This theory asserts that if all the coefficients and the nonhomogeneous term are  H\"older continuous with respect to  all variables, then so are the second-order (spatial) derivatives of the solution. Such estimates can be proved either by using the integral representation of solutions and the bounds of the higher-order derivatives   of the fundamental solution to the heat equation (see, for example, \cite{F_64}) or by  `kernel-free' methods (see \cite{GM_12}, \cite{Kr_96},   \cite{Sch_96}, \cite{S_97}, \cite{T_86}).

\textit{Partial Schauder estimates for elliptic/parabolic equations.} These are results saying that if  the data are H\"older continuous only with respect to some variables, then so are the second-order derivatives (see  \cite{DK_11}, \cite{DK_19}, \cite{Fi_63},  \cite{TW_10}).

\textit{Schauder estimates for parabolic equations with time irregular coefficients.}
In was showed in \cite{B_69} that if for the nondegenerate parabolic equation, the coefficients and the nonhomogeneous term are of class $L_{\infty, t} C^{\alpha}_x$, then the spatial second-order derivatives of the solution belong to the same space. Later, the author of \cite{Kn_81} improved this result by showing that under the same assumptions, the second-order derivatives are H\"older continuous with respect to the space and time variables.  Both papers \cite{B_69} and \cite{Kn_81} are concerned with the interior Schauder estimate. The global estimate (up to the boundary) was established later in \cite{Li_92}. For the related results for  parabolic PDEs with unbounded nonhomogeneous terms or unbounded lower-order  coefficients, we refer the reader to   \cite{Lo_00} and \cite{KrP_10}, respectively. The parabolic systems with  time irregular coefficients are treated  in \cite{DZ_15} (see also \cite{Boc_13}).

\textit{Schauder estimates for the KFP equations with H\"older continuous coefficients.}
A discussion of the H\"older theory and related results for the KFP equation can be found in \cite{AP_20}.
The global (partial) parabolic Schauder estimate (cf. \cite{B_69}) is established in \cite{L_97} under the additional assumptions that the leading coefficients $a^{i j}$ are independent of time and have a limit at infinity (see also \cite{P_09} and the references therein). In the case when the leading coefficients are H\"older continuous in $t, x$, and $v$, the interior Schauder estimate   was proved in   \cite{DFP_06}, \cite{M_97}, and \cite{HS_20}. Later, the authors of \cite{IM_21}  established the global Schauder estimate in  the H\"older space  $C^{2, \alpha}_l (\bR^{1+2d})$, which is similar to $\kC^{2, \alpha} (\bR^{1+2d})$ (see Remark \ref{remark 1.5}). However, due to the nonequivalence  of the kinetic H\"older spaces and the usual H\"older spaces, the classical theory developed  in \cite{IM_21} does not even  yield the \textit{global} estimate in the case when $d = 1$, $a \equiv 1$, $b \equiv 0, c \equiv 0$, and $f = f (x)$ is smooth, say $f (x) = \sin (x)$. In particular, Theorem 3.5 of \cite{IM_21} requires $f \in C^{\alpha}_l (\bR^{ 3 })$ (see Definition 2.2 therein). It is easy to see that for $\alpha \in (0, 1)$, the  $C^{\alpha}_l (\bR^{3})$ seminorm is equivalent to the one defined in \eqref{1.15}, and, therefore, $\sin (x)$ does not belong to $C^{\alpha}_l (\bR^{3})$. We mention that the authors of \cite{IM_21} used a kernel-free approach inspired by Safonov's proof of the classical Schauder estimate (see the exposition in \cite{Kr_96}).

\textit{Schauder estimates for the KFP equation with  irregular coefficients.}
The partial parabolic Schauder estimates similar to that of \cite{B_69} were investigated in \cite{BB_22}, \cite{CHM_21},  and \cite{HW_22}. Their results can be stated in the following general way:  under the assumptions \ref{assumption 1.1} - \ref{assumption 1.3}, the $L_{\infty} C^{\alpha/3, \alpha}_{x, v}$ seminorm of $D^2_v u$ is controlled by the $L_{\infty} C^{\alpha/3, \alpha}_{x, v}$ norms of $a, b, c$, and $u$. To elaborate,
\begin{itemize}
    \item \cite{HW_22} is concerned with the interior estimate, which is applied to the well-posedness problem for the LE with a `rough' initial datum,

    \item in \cite{BB_22} and \cite{CHM_21}, for $T < \infty$, the global results in $L_{\infty} C^{\alpha/3, \alpha}_{x, v}  (\bR^{1+2d}_T)$  and $L_{\infty} C^{\alpha/3, \alpha}_{x, v} ((0, T) \times \bR^{2d})$, respectively,  were established,

    \item  a certain interior Schauder estimate in all variables $t, x, v$ was proved in \cite{BB_22}, and the authors of \cite{HW_22} also commented on the possibility of deriving such an  estimate  from one of their main results (see the paragraph under the formula $(1.5)$ therein).
    \end{itemize}
We also mention the article \cite{PRS_22}, where the interior Schauder estimate for the operator \eqref{1.2}  was derived under the assumption that the leading coefficients satisfy a  Dini type condition.
A few remarks in order.
\begin{itemize}
\item The papers \cite{BB_22}, \cite{CHM_21},  and \cite{PRS_22}  are concerned with the degenerate Kolmogorov  operators that are more general than \eqref{1.2}.

\item  The arguments of the articles \cite{BB_22},  \cite{CHM_21}, \cite{HW_22} (partial Schauder estimates for the KFP equation) use the explicit form of the fundamental solution of $P$.
\end{itemize}

\textit{Schauder estimate for nonlocal kinetic equations.} For the related results, see \cite{HMZ_20} and \cite{IS_21}, and the references therein.

\subsection{Novelties of the present  work.}
                                        \label{section 1.7}
First, we prove the global and the interior $\kC^{2, \alpha}$  estimates  for Eq. \eqref{1.1} with $a, b, c \in L_{\infty} C^{\alpha/3, \alpha}_{x, v} ( \bR^{1+2d}_T)$ with $T \le \infty$ (see Theorem \ref{theorem 1.5}). 
Since $\kC^{\alpha} \subset L_{\infty} C^{\alpha/3, \alpha}_{x, v}$, our result generalizes those of \cite{BB_22}, \cite{CHM_21}, \cite{HW_22} in the case when the equation considered is \eqref{1.1}.
Second, we show that the constant on the right-hand side of the a priori estimate \eqref{eq1.5.2} grows as $\delta^{-\theta}$ as we decrease the lower eigenvalue bound $\delta$, which is useful when nonlinear equations are considered (cf. \cite{DGY_21}, \cite{DGO_20}).   Third, our approach,  inspired by Campanato's method (see, for example, \cite{GM_12}), is kernel-free and is different from the ones in the existing literature on the KFP equations (cf. \cite{BB_22}, \cite{CHM_21}, \cite{HW_22}, \cite{IM_21}, \cite{PRS_22}).

\subsection{Strategy of the proof.}
                        \label{section 1.3}
The main part of the argument is the proof of the a priori estimate \eqref{eq1.5.2} for  a sufficiently regular function $u$ (see Lemma \ref{lemma 4.1}). We remark that  the $\kC^{\alpha}$ estimate of  $(-\Delta_x)^{1/3} u$ is obtained as a by-product of our argument. Nevertheless, due to Lemma \ref{lemma 3.5},  the mean-oscillation estimate  of $(-\Delta_x)^{1/3} u$ (see Proposition \ref{proposition 3.3}) plays an important role in the proof of $\kC^{\alpha}$ estimate of $D^2_v u$.
To prove \eqref{eq1.5.2}, we follow Campanato's approach (see \cite{GM_12} and \cite{DZ_15}), which enables us to reduce the problem to  estimating a `kinetic' Campanato type seminorm of $D^2_v u$ (see  Lemma \ref{lemma 2.1}) adapted to the symmetries of the KFP operator $P$ (see Lemma \ref{lemma 2.0}).

First, we show how our argument works in the case when the coefficients $a^{i j}$ depend only on the temporal variable.
Our goal is to estimate the mean-oscillation of $(-\Delta_x)^{1/3} u$ and $D^2_v u$ over an arbitrary kinetic cylinder  $Q_r (z_0)$, $z_0 \in \overline{\bR^{1+2d}_T}$.
We split $u$ into a `caloric part' $u_c$ and a remainder $u_{\text{rem}}$ such that
\begin{align*}
  &  P u_c (z) = \chi (t) \quad \text{in} \, (t_0 -  (\nu r)^2, t_0) \times \bR^d \times B_{\nu r} (v_0),\\
  & P u_{\text{rem}}(z) = \big(f (z)  - \chi (t)\big) \phi (t, v) \quad \text{in} \, (t_0 -  (2\nu r)^2, t_0)  \times \bR^{2d},
\end{align*}
Here
\begin{itemize}
\item[--] $f = P u$, $\chi (t) = f (t, x_0 - (t-t_0)v_0, v_0)$,
\item[--] $\phi$ is a suitable cutoff function,
\item[--] $\nu \ge 2$ is a number, which  we will choose later.
\end{itemize}
By using the $S_2$ estimate (see Theorem \ref{theorem A.1}), we bound the $L_2$ average of $D^2_v u_{\text{rem}}$ and $(-\Delta_x)^{1/3} u_{\text{rem}}$ over the cylinder $Q_r (z_0)$.
Furthermore,  by the $S_2$ regularity results and the pointwise formula \eqref{1.9} for the fractional Laplacians, we prove the mean-oscillation estimate for $D^2_v u_{c}$ and $(-\Delta_x)^{1/3} u_{c}$. Combining these bounds, we obtain the mean-oscillation inequality for $D^2_v u$ and $(-\Delta_x)^{1/3} u$ (see Proposition \ref{proposition 3.3}).  Taking $\nu \ge 2$ large and using the equivalence of the Campanato and H\"older seminorms (see Lemma \ref{lemma 2.1}), we prove \eqref{eq1.5.2}. We remark that the choice of  the function $\chi$  is dictated by the specific form of the kinetic cylinder $Q_r (z_0)$. In the spatially homogeneous case, one can take $\chi (t) = f (t, v_0)$ (see \cite{DZ_15}).

In the general case,  we  perturb the mean-oscillation estimates in Proposition \ref{proposition 3.3} by using the method of frozen coefficients (see Lemma \ref{lemma 4.1}) and follow the above argument.

\subsection{Additional notation and remarks}

\textbf{Geometric notation.}
                        \label{section 1.5}
\begin{align}
 &  	B_r (x_0) = \{\xi \in \bR^d: |\xi-x_0| < r\},
 \quad  B_r  = B_r (0), \notag\\
&\label{eq1.10}
	Q_{r,  c r} (z_0)
	=  \{z: -r^2<t - t_0<0, |v-v_0| < r,  |x - x_0 + (t - t_0) v_0|^{1/3} < c r\},\\
& \label{eq1.13}
\widetilde Q_{r, c r} (z_0) = \{z: |t - t_0| < r^2, |v-v_0| < r,  |x - x_0 + (t - t_0) v_0|^{1/3} < c  r\}, \\
&\label{eq1.14}
\widehat{Q}_{r} (z_0) = \{z \in \bR^{1+2d}:   \widehat \rho (z, z_0) < r\},\\
 & Q_{r,  c r}  = Q_{r,  c r} (0), \quad
 \widetilde Q_{r, c r}  = \widetilde Q_{r, c r} (0), \quad \widetilde Q_r (z_0) = \widetilde Q_{r, r} (z_0).  \notag
\end{align}

\textbf{Average.} For a function $f$ on $\bR^d$ and a Lebesgue measurable set $A$ of positive finite measure, we denote its average over $A$ as
$$
    (f)_A = \fint_{A} f \, dx = |A|^{-1} \int_A f \, dx.
$$

\textbf{Functional spaces.}
 For an open set $G \subset \bR^d$, we set $C_b (\overline{G})$ to  be the space of all bounded uniformly continuous functions on $\overline{G}$.
Furthermore, for $k \in \{1, 2, \ldots\}$, we denote  by  $C^k_b (\overline{G})$ the space  of all functions in $C_b (\overline{G})$ such that   all the derivatives up to order $k$ extend continuously to $\overline{G}$.
We also set $C^k_0 (\bR^d)$ to be the subspace of all $C^k_b (\bR^d)$ functions vanishing at infinity along with all the derivatives  up to order $k$.

\textit{Kinetic Sobolev spaces.}
For $p \in [1, \infty]$ and an open set $G \subset \bR^{1+2d}$,
\begin{equation}
            \label{eq1.11}
    S_p (G) := \{u \in L_p (G): \partial_t u - v \cdot D_x u, D_v u, D^2_v u \in L_p (G)\}.
\end{equation}

\textit{Local   kinetic Sobolev spaces.}
By $L_{p; \text{loc}} (G)$ we denote the set of all measurable functions $u$ such that for any $\phi \in C^{\infty}_0 (G)$, $u \phi \in L_p (G)$. Furthermore, we define $S_{p; \text{loc}} (G)$ by \eqref{eq1.11} with $L_{p} (G)$ replaced with $L_{p; \text{loc}} (G)$.

\begin{remark}
            \label{remark 1.1}
Here we give a couple of examples of functions belonging to the spaces $\kC^{\alpha} (\bR^{1+2d}_T)$ and $\kC^{2, \alpha} (\bR^{1+2d}_T)$.

As pointed out in Section \ref{section 1.4}, even if $u = u (x, v)$ is smooth in $x$ and $v$, it might not be of class $\kC^{\alpha} (\bR^{1+2d}_T)$. On the other hand, it is easy to prove directly that for $\zeta, \xi \in C^{\infty}_0 (\bR^d)$, one has  $\zeta (x) \xi (v) \in \kC^{\alpha} (\bR^{1+2d}_T)$. This fact also follows from Lemma \ref{lemma 5.1}. Similarly, one can also show that $\zeta (x) \xi (v) \in \kC^{2, \alpha} (\bR^{1+2d}_T)$.

Here is an example of a function of class $\kC^{2, \alpha} (\bR^{1+2d}_T)$ that depends on all variables $t, x, v$.  Let $\psi \in C^3_b (\bR^d)$ and denote
$$
    \phi (z) = e^{-t^2} \psi (x+tv).
$$
We have
$$
    \partial_t \phi  - v \cdot D_x \phi = -2t e^{-t^2} \psi (x+v t),
$$
$$
    D_{v_i v_j} \phi (z) = t^2 e^{-t^2} (D_{v_i v_j} \psi) (x+t v).
$$
Again, either estimating the $\kC^{\alpha}$ seminorm directly or by using Lemma \ref{lemma 5.1}, we conclude that $u, D^2_v u, \partial_t u - v \cdot D_x u \in \kC^{\alpha} (\bR^{1+2d}_T)$.
\end{remark}

\begin{remark}
            \label{remark 1.2}
It follows from the interpolation inequality in the usual H\"older space (see Lemma \ref{lemma B.3}) that
if $u \in \kC^{2, \alpha} (\bR^{1+2d}_T)$, then for any $\varepsilon > 0$, one has
\begin{align*}
&\|D^2_v u\|_{ L_{\infty} (\bR^{1+2d}_T) } \le N \varepsilon^{\alpha} \sup_{t \in (-\infty, T], x \in \bR^d} [D^2_v u (t, x, \cdot)]_{  C^{\alpha}_v (\bR^d)   } +  N  \varepsilon^{-2} \|u\|_{ L_{\infty} (\bR^{1+2d}_T) }, \\
       & \|D_v u\|_{ L_{\infty} (\bR^{1+2d}_T) }\le N \varepsilon^{1+\alpha} \sup_{t \in (-\infty, T], x \in \bR^d} [D^2_v u (t, x, \cdot)]_{  C^{\alpha}_v (\bR^d)   } +  N  \varepsilon^{-1} \|u\|_{ L_{\infty} (\bR^{1+2d}_T) },
  \end{align*}
  and this is why the suprema of $D_v u$ and $D^2_v u$ are not included in the $C^{2, \alpha} (\bR^{1+2d}_T)$ norm.
\end{remark}

\begin{remark}
                    \label{remark 1.3}
The completeness of $\kC^{\alpha} (\bR^{1+2d}_T)$ and $\kC^{2, \alpha}(\bR^{1+2d}_T)$ follows from that of $L_{\infty} (\bR^{1+2d}_T)$ and the Arzela-Ascoli theorem.
\end{remark}

\begin{remark}
            \label{remark 1.4}
It is easy to see that the following  product rule inequality holds:
$$
    [f g]_{X} \le \|f\|_{ L_{\infty} (\bR^{1+2d}_T)} [g]_{X} + \|g\|_{L_{\infty}(\bR^{1+2d}_T)} [f]_{X},
$$
where $X = \kC^{\alpha} (\bR^{1+2d}_T)$ or $L_{\infty}  C^{\alpha/3, \alpha}_{x, v} (\bR^{1+2d}_T)$.
\end{remark}

\subsection{Organization of the paper}
                    \label{section 1.6}
  In Section \ref{section 2}, we prove some auxiliary results including the equivalence of the kinetic  H\"older and Campanato seminorms.
  In Section \ref{section 3}, we establish the mean-oscillation estimates of $(-\Delta_x)^{1/3} u$ and $D^2_v u$ which constitute the crux of the proof of Theorem \ref{theorem 1.5}. We give a proof of the aforementioned theorem in Section \ref{section 4}. Finally, Corollaries \ref{corollary 1.6} - \ref{corollary 1.7}  are proved in Section \ref{section 5}.

\textbf{Acknowledgements.} The authors would like to thank  Weinan Wang for drawing their attention to the article \cite{HW_22}.

\section{Auxiliary result}
                \label{section 2}

\begin{lemma}
			\label{lemma 2.0}
Let $p\in [1,\infty]$  and
$u \in S_{p, \text{loc} } (\bR^{1+2d}_T)$.
For any $z_0 \in \bR^{1+2d}_T$ and any function $h$ on $\bR^{1+2d}_T$, denote
\begin{align}
            \label{eq2.0.0}
&	\widetilde z = (r^2 t + t_0, r^3 x + x_0 -  r^2 t v_0, r v + v_0),\quad
	\widetilde h (z) = h (\widetilde z),\\
            \label{eq2.0.1}
 &   Y = \partial_t  - v \cdot D_x,
\quad    \widetilde P = \partial_t
     - v \cdot D_x
     - a^{ij} (\widetilde z) D_{   v_i v_j }.
\end{align}
Then,
$$
    Y \widetilde u (z) = r^2 Y u (\widetilde z),
\quad		\widetilde P \widetilde u (z) = r^2 (P u) (\widetilde z).
$$
\end{lemma}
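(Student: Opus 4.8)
The plan is to read both identities as instances of the chain rule: first I would verify them for $u\in C^\infty$, where they reduce to a direct computation, and then extend to $u\in S_{p,\text{loc}}(\bR^{1+2d}_T)$ by approximation. (Throughout, the identities are understood on the set of $z$ for which $\widetilde z\in\bR^{1+2d}_T$; this is all of $\bR^{1+2d}$ when $T=\infty$, and in the applications $r$ is taken small enough that no issue arises.)

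For the smooth case, write $\widetilde z=(\widetilde t,\widetilde x,\widetilde v)$ with $\widetilde t=r^2t+t_0$, $\widetilde x=r^3x+x_0-r^2tv_0$, $\widetilde v=rv+v_0$, and let $(\partial_tu)(\widetilde z)$, $(D_{x_j}u)(\widetilde z)$, $(D_{v_i}u)(\widetilde z)$ denote the partial derivatives of $u$ with respect to its first, its $(1+j)$-th, and its $(1+d+i)$-th slot, evaluated at $\widetilde z$. From $\partial_t\widetilde t=r^2$, $\partial_t\widetilde x=-r^2v_0$, $\partial_t\widetilde v=0$, and $D_{x_j}\widetilde x_i=r^3\delta_{ij}$, $D_{x_j}\widetilde t=0$, $D_{x_j}\widetilde v=0$, the chain rule gives
\begin{equation*}
\partial_t\widetilde u(z)=r^2(\partial_tu)(\widetilde z)-r^2v_0\cdot(D_xu)(\widetilde z),\qquad D_{x_j}\widetilde u(z)=r^3(D_{x_j}u)(\widetilde z),
\end{equation*}
so that, using $\widetilde v=rv+v_0$,
\begin{equation*}
Y\widetilde u(z)=\partial_t\widetilde u(z)-v\cdot D_x\widetilde u(z)=r^2(\partial_tu)(\widetilde z)-r^2(rv+v_0)\cdot(D_xu)(\widetilde z)=r^2(Yu)(\widetilde z).
\end{equation*}
In the same way, $D_{v_j}\widetilde v_i=r\delta_{ij}$ together with $D_{v_j}\widetilde t=D_{v_j}\widetilde x=0$ gives $D_{v_i}\widetilde u(z)=r(D_{v_i}u)(\widetilde z)$ and hence $D_{v_iv_j}\widetilde u(z)=r^2(D_{v_iv_j}u)(\widetilde z)$, whence $a^{ij}(\widetilde z)D_{v_iv_j}\widetilde u(z)=r^2(a^{ij}D_{v_iv_j}u)(\widetilde z)$. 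Subtracting this from the previous display yields $\widetilde P\widetilde u(z)=r^2(Pu)(\widetilde z)$.

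To pass to $u\in S_{p,\text{loc}}$, I would mollify $u$ on a slightly enlarged set, apply the smooth case to the mollifications, and let the mollification parameter go to zero; equivalently, one tests the claimed identities against $\phi\in C^\infty_0$ and changes variables via the affine, block-triangular bijection $z\mapsto\widetilde z$ (with constant Jacobian $r^{2+4d}$), which reduces them to the distributional definitions of $Yu$, $D_vu$, $D^2_vu$. The only delicate point here is that the $x$-component of the map depends on $t$; because $v\cdot D_x$ is divergence-free in $x$ this coupling produces no additional lower-order terms, and since $a$ enters $\widetilde P$ purely as a multiplier its mere measurability causes no trouble. I expect this last verification, rather than the computation itself, to be the (minor) technical obstacle.
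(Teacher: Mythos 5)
The paper states Lemma \ref{lemma 2.0} without proof, treating it as a routine scaling identity for the kinetic operator. Your chain-rule computation in the smooth case is exactly the intended verification — the key point being that the $-r^2v_0\cdot(D_xu)(\widetilde z)$ term produced by the $t$-dependence of the $x$-shift combines with $-v\cdot D_x\widetilde u(z)=-r^3v\cdot(D_xu)(\widetilde z)$ to give $-r^2\widetilde v\cdot(D_xu)(\widetilde z)$, and the $v$-derivatives scale cleanly by $r$ — and your extension to $S_{p,\text{loc}}$ by mollification or by testing against $C^\infty_0$ and changing variables (constant Jacobian $r^{2+4d}$) is the standard way to make this rigorous when only $Yu$, not $\partial_tu$ and $D_xu$ separately, is known to exist. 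The proof is correct and complete.
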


We introduce a kinetic Campanato type seminorm
\begin{equation}
            \label{eqB.1.10}
        [u]_{ \mathcal{L}^{2, \alpha}_{\text{kin}} (\bR^{1+2d}_T) } = \sup_{r > 0, z_0 \in \overline{\bR^{1+2d}_T} }  r^{-\alpha}
    (|u - (u)_{Q_r (z_0)}|^2)^{1/2}_{Q_r (z_0)}
\end{equation}
 (cf. Chapter 5 in \cite{GM_12}).

Here is a version of  Campanato's result (cf. Theorem 5.5 in \cite{GM_12}).
\begin{lemma}
            \label{lemma 2.1}
Let $\alpha \in (0, 1]$ and $u \in L_{2, \text{loc}} (\bR^{1+2d}_T)$ be a function
such that
$$
[u]_{ \mathcal{L}^{2, \alpha}_{\text{kin}} (\bR^{1+2d}_T)} < \infty.
$$
Then, one has
\begin{equation}
    \label{eqB.1.0}
    N [u]_{\kC^{\alpha} (\bR^{1+2d}_T)} \le  [u]_{ \mathcal{L}^{2, \alpha}_{\text{kin}} (\bR^{1+2d}_T)}
  \le  N^{-1}  [u]_{\kC^{\alpha} (\bR^{1+2d}_T)},
\end{equation}
where  $N = N (d, \alpha)$.
\end{lemma}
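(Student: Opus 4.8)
The plan is to prove the two-sided equivalence in \eqref{eqB.1.0} by adapting the classical Campanato characterization (Theorem 5.5 in \cite{GM_12}) to the kinetic setting, the key point being that the family of cylinders $Q_r(z_0)$ is invariant (up to the quasi-distance $\rho$) under the dilations and Galilean shifts built into Lemma \ref{lemma 2.0}, so the usual Euclidean arguments carry over once one works with $\rho$ in place of $|z-z'|$.

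\textbf{The easy inclusion.} First I would prove $[u]_{\mathcal{L}^{2,\alpha}_{\text{kin}}}\le N^{-1}[u]_{\kC^\alpha}$, which is elementary: if $z,z'\in Q_r(z_0)$ then $\rho(z,z')\le N r$ (here one uses the quasi-triangle inequality for $\rho$ and the comparability of $\rho$ and $\widehat\rho$ on such cylinders, cf. Lemma \ref{lemma B.1}), hence $|u(z)-u(z')|\le [u]_{\kC^\alpha}(Nr)^\alpha$. Averaging $|u-(u)_{Q_r(z_0)}|^2=\left|\fint_{Q_r(z_0)}(u(\cdot)-u(z'))\,dz'\right|^2$ over $Q_r(z_0)$ and applying Jensen gives $(|u-(u)_{Q_r(z_0)}|^2)^{1/2}_{Q_r(z_0)}\le N r^\alpha[u]_{\kC^\alpha}$, and taking the supremum over $r>0$, $z_0\in\overline{\bR^{1+2d}_T}$ finishes this direction.

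\textbf{The hard inclusion.} The substantive part is $[u]_{\kC^\alpha}\le N[u]_{\mathcal{L}^{2,\alpha}_{\text{kin}}}$. Set $A:=[u]_{\mathcal{L}^{2,\alpha}_{\text{kin}}}$ and abbreviate $u_{r}(z_0):=(u)_{Q_r(z_0)}$. The classical telescoping argument: fix $z_0$ and compare averages on concentric cylinders $Q_{2^{-k}r}(z_0)$. Using $|Q_{2^{-k-1}r}(z_0)|\sim 2^{-(1+2d+3d)\cdots}|Q_{2^{-k}r}(z_0)|$ — the precise homogeneity exponent coming from the scaling $(t,x,v)\mapsto(r^2t,r^3x,rv)$ in \eqref{eq2.0.0}, so the kinetic cylinder has measure $\sim r^{2+3d+d}$ — together with the inclusion $Q_{2^{-k-1}r}(z_0)\subset Q_{2^{-k}r}(z_0)$, one gets
\begin{equation*}
|u_{2^{-k-1}r}(z_0)-u_{2^{-k}r}(z_0)|\le N\,(|u-u_{2^{-k}r}(z_0)|^2)^{1/2}_{Q_{2^{-k}r}(z_0)}\le N A\,(2^{-k}r)^\alpha,
\end{equation*}
and summing the geometric series shows $u_{2^{-k}r}(z_0)$ is Cauchy with limit $\bar u(z_0)$ satisfying $|u_r(z_0)-\bar u(z_0)|\le N A r^\alpha$; a Lebesgue-point argument (the cylinders $Q_r(z_0)$ shrink nicely to $z_0$, again after symmetrizing via $\widehat\rho$) identifies $\bar u(z_0)=u(z_0)$ a.e. Then for $z_0,z_0'$ with $R:=\widehat\rho(z_0,z_0')$ one compares $u(z_0)$ and $u(z_0')$ through a common cylinder $Q_{NR}(z_1)$ containing both $Q_R(z_0)$ and $Q_R(z_0')$ (this is where one needs a \emph{covering/engulfing} property of the kinetic cylinders, which follows from the quasi-triangle inequality of $\rho$ and the relation \eqref{1.4}): writing
\begin{equation*}
|u(z_0)-u(z_0')|\le |u(z_0)-u_{NR}(z_1)|+|u_{NR}(z_1)-u(z_0')|\le N A\,R^\alpha,
\end{equation*}
where each term is bounded by combining the telescoping estimate above with $(|u-u_{NR}(z_1)|^2)^{1/2}_{Q_R(z_0)}\le N(|u-u_{NR}(z_1)|^2)^{1/2}_{Q_{NR}(z_1)}\le NAR^\alpha$ (using $|Q_R|\sim N^{-1}|Q_{NR}|$). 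Dividing by $R^\alpha\ge\rho^\alpha(z_0,z_0')$ gives the claim.

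\textbf{Main obstacle.} The genuine difficulty, compared to the Euclidean case in \cite{GM_12}, is the lack of symmetry of $\rho$ and the Galilean-shift structure of the cylinders $Q_r(z_0)$ defined in \eqref{eq1.10}: the map $z_0\mapsto Q_r(z_0)$ does not simply translate, so "engulfing" — given $z_0'\in Q_r(z_0)$, find a comparable cylinder centered appropriately containing both — must be checked by hand using the quasi-metric properties of $\rho$ and the symmetrization $\widehat\rho$ (Lemma \ref{lemma B.1}). Once that geometric lemma is in place, the measure-homogeneity of $Q_r$ under the intrinsic scaling and the standard telescoping/Lebesgue-differentiation machinery go through verbatim. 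I would isolate the engulfing property as a short preliminary step (or cite it from Lemma \ref{lemma B.1}) and then present the telescoping argument in the two displays above, keeping all constants tracked as $N=N(d,\alpha)$.
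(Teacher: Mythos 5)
Your proposal is correct and follows essentially the same route as the paper's proof: the easy direction by Jensen, and the hard direction via the classical Campanato telescoping on concentric cylinders, identification of the limit of averages with $u$ by Lebesgue differentiation in the space of homogeneous type, and comparison of two points through a common engulfing cylinder; you also correctly identify the asymmetry of $\rho$ and the Galilean structure of $Q_r(z_0)$ as the genuine technical obstacle, which the paper likewise resolves by passing to the symmetrized balls $\widehat Q_r$ and invoking the quasi-metric/doubling properties of Lemma \ref{lemma B.1}. The only detail you leave implicit is the handling of the temporal boundary when $T<\infty$ (the paper reduces $\widehat Q_r(z_0)\cap\bR^{1+2d}_T$ to a genuine cylinder $Q_{2r}$ with a shifted center), but this is a minor technical point rather than a gap in the argument.
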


\begin{proof}
The second estimate follows from the  definitions of the seminorms.
The proof of the first bound is split into three steps.

\textit{Step 1: replacing $\rho$ with its symmetrization $\widehat \rho$ (see \eqref{1.3} -\eqref{1.4}).}
We claim that to prove \eqref{eqB.1.0}, it suffices to show that for any $z_1, z_2 \in \bR^{1+2d}_T$,
\begin{equation}
\begin{aligned}
    \label{eqB.1.2}
    &|u (z_1) - u (z_2)| \le \widehat \rho^{\alpha} (z_1, z_2) \sup_{r > 0} r^{-\alpha}
     (|u - (u)_{ \widehat Q_r (z_0) \cap \bR^{1+2d}_T}|^2)^{1/2}_{\widehat Q_r (z_0) \cap \bR^{1+2d}_T}.
\end{aligned}
\end{equation}
Assuming \eqref{eqB.1.2},  by Lemma \ref{lemma B.1} $(ii)$, we only need to demonstrate that
\begin{equation}
\begin{aligned}
    \label{eqB.1.4}
     \sup_{r > 0} r^{-\alpha}
     (|u - (u)_{ \widehat Q_r (z_0) \cap \bR^{1+2d}_T}|^2)^{1/2}_{\widehat Q_r (z_0) \cap \bR^{1+2d}_T}
    \le N (d, \alpha) [u]_{ \mathcal{L}^{2, \alpha}_{\text{kin}} (\bR^{1+2d}_T)}.
\end{aligned}
\end{equation}
 Indeed, by Lemma \ref{lemma B.1}  $(iv)$,
\begin{equation}
\label{eqB.1.7}
\begin{aligned}
&
   (|u - (u)_{ \widehat Q_r (z_0) \cap \bR^{1+2d}_T}|^2)^{1/2}_{\widehat Q_r (z_0) \cap \bR^{1+2d}_T}\\
 &
   \le N \frac{|\widetilde Q_r (z_0) \cap \bR^{1+2d}_T|^2}{|\widehat Q_r (z_0) \cap \bR^{1+2d}_T|^2} (|u - (u)_{ \widetilde Q_r (z_0) \cap \bR^{1+2d}_T}|^2)^{1/2}_{\widetilde Q_r (z_0) \cap \bR^{1+2d}_T}.
\end{aligned}
\end{equation}
By the doubling property (see Lemma \ref{lemma B.1} $(v)$) and Lemma \ref{lemma B.1} $(iv)$,
$$
     \frac{|\widetilde Q_r (z_0) \cap \bR^{1+2d}_T|}{|\widehat Q_r (z_0) \cap \bR^{1+2d}_T|}
     \le  \frac{|\widehat Q_{3r} (z_0) \cap \bR^{1+2d}_T|}{|\widehat Q_r (z_0) \cap \bR^{1+2d}_T|}  \frac{|\widetilde Q_{r} (z_0) \cap \bR^{1+2d}_T|}{|\widehat Q_{3r} (z_0) \cap \bR^{1+2d}_T|} \le N (d).
$$
Hence, we may replace the left-hand side of \eqref{eqB.1.4} with
$$
  (|u - (u)_{ \widetilde Q_r (z_0) \cap \bR^{1+2d}_T}|^2)^{1/2}_{\widetilde Q_r (z_0) \cap \bR^{1+2d}_T}.
$$

Next, we will consider the case $T < \infty$ and assume that $T = 0$, for the sake of simplicity.
Note that if $t_0 < - r^2$, one has
$$
	 \tQ_{r} (z_0)  \subset Q_{2r} (t_0 + r^2, x_0 -  r^2 v_0, v_0) \subset \bR^{1+2d}_0.
$$
If $t_0 \ge -r^2$, then,
$$
	\tQ_{r} (z_0) \cap \bR^{1+2d}_0 \subset \overline{ Q_{2 r}} (0, x_0 + t_0 v_0, v_0).
$$
Thus,
$$
   \sup_{r > 0, z_0 \in \bR^{1+2d}_T} r^{-\alpha}  (|u - (u)_{ \widetilde Q_r (z_0) \cap \bR^{1+2d}_T}|^2)^{1/2}_{\widetilde Q_r (z_0) \cap \bR^{1+2d}_T} \le N (d, \alpha) [u]_{ \mathcal{L}^{2, \alpha}_{\text{kin}} (\bR^{1+2d}_T) },
$$
so that \eqref{eqB.1.4} holds.

\textit{Step 2: estimate of the deviation of $u$ from its  average.}
In the remaining steps, we follow the argument of Theorem 5.5 in \cite{GM_12} closely.
Here we prove that for a.e. $z_0 \in \bR^{1+2d}_T$, and $r > 0$,
\begin{equation}
            \label{eqB.1.1}
|u (z_0) - (u)_{\widehat Q_r (z_0) \cap \bR^{1+2d}_T }|  \le r^{\alpha} [u]_{ \mathcal{L}^{2, \alpha}_{\text{kin}} (\bR^{1+2d}_T) }.
\end{equation}

First, let $r_n = 2^{-n} r$ and denote $\mathcal{Q}_n (z_0) = \widehat Q_{r_n} (z_0) \cap \bR^{1+2d}_T$. We claim that
\begin{equation}
            \label{eqB.1.5}
    |(u)_{ \mathcal{Q}_{n} (z_0) } - (u)_{ \mathcal{Q}_{n+1} (z_0) }|
    \le N (d) \, r_{n+1}^{\alpha} [u]_{ \mathcal{L}^{2, \alpha}_{\text{kin}} (\bR^{1+2d}_T) }.
\end{equation}
To prove this, we note that for any Lebesgue measurable sets of finite measure $A \subset A'$,
\begin{equation}
            \label{eqB.1.3}
    |(f)_{A'} - (f)_A| \le \frac{|A'|}{|A|} (|f - (f)_{A'}|)_{A'} \le \frac{|A'|}{|A|} (|f - (f)_{A'}|^2)^{1/2}_{A'}.
\end{equation}
This combined with the doubling property (see Lemma \ref{lemma B.1} $(v)$) yields \eqref{eqB.1.5}.
Then, by  using telescoping series and \eqref{eqB.1.5}, we obtain
\begin{equation}
    \label{eqB.1.6}
\begin{aligned}
  &  |(u)_{ \widehat {Q}_{r} (z_0) \cap \bR^{1+2d}_T} - (u)_{ \mathcal{Q}_{n+1} (z_0) }| \le \sum_{j=0}^n |(u)_{ \mathcal{Q}_{j} (z_0)} - (u)_{ \mathcal{Q}_{j+1} (z_0) }|\\
   & \le N (d, \alpha)r^{\alpha}  [u]_{ \mathcal{L}^{2, \alpha}_{\text{kin}} (\bR^{1+2d}_T) } \sum_{j=0}^n 2^{-\alpha (j+1)} \le N (d, \alpha)\, r^{\alpha} [u]_{ \mathcal{L}^{2, \alpha}_{\text{kin}} (\bR^{1+2d}_T)}.
\end{aligned}
\end{equation}
Furthermore, by the Lebesgue differentiation theorem in spaces of homogeneous type (see Lemma 7 in \cite{C_76}) and  Lemma \ref{lemma B.1} $(v)$,
$$
    \lim_{R \to 0} (u)_{ \widehat {Q}_{R} (z_0) \cap \bR^{1+2d}_T } = u (z_0) \quad \text{for a.e.} \, \, z_0 \in \bR^{1+2d}_T.
$$
Then, passing to the limit in \eqref{eqB.1.6} as $n \to \infty$, we prove \eqref{eqB.1.1}.

\textit{Step 3: proof of \eqref{eqB.1.2}.}
We fix any two points $z_1, z_2 \in \bR^{1+2d}_T$ satisfying \eqref{eqB.1.1}  and  denote $r = \widehat \rho (z_1, z_2)$. In view of Lemma \ref{lemma B.1}, we have $\widehat {Q}_{r} (z_1)\subset \widehat {Q}_{4r} (z_2)$.
Then, by the triangle inequality,
\begin{equation}
\begin{aligned}
            \label{eqB.1}
        |u (z_1) - u (z_2)| &\le |u (z_1) - (u)_{ \widehat {Q}_{r} (z_1) \cap \bR^{1+2d}_T}|+ |u (z_2) - (u)_{\widehat {Q}_{ 4 r} (z_2) \cap \bR^{1+2d}_T}|\\
       &\quad  + |(u)_{\widehat {Q}_{ 4 r} (z_2) \cap \bR^{1+2d}_T} - (u)_{\widehat {Q}_{r} (z_1) \cap \bR^{1+2d}_T}| =: J_1+J_2+J_3.
        \end{aligned}
\end{equation}
By \eqref{eqB.1.1}, we have
\begin{equation}
            \label{eqB.2}
    J_1+J_2 \le N (d, \alpha)\,  r^{\alpha} [u]_{ \mathcal{L}^{2, \alpha}_{\text{kin}} (\bR^{1+2d}_T) }.
\end{equation}

Next, to estimate $J_3$, we use an argument similar to that of \eqref{eqB.1.7}. By Lemma \ref{lemma B.1}, \eqref{eqB.1.3}, and the doubling property (Lemma \ref{lemma B.1} $(v)$), we obtain
\begin{equation}
        \label{eqB.3}
\begin{aligned}
    J_3 &\le N (d)   \frac{|\widehat Q_{  4 r} (z_{ 2 }) \cap \bR^{1+2d}_T|^2}{|\widehat Q_r (z_{1}) \cap \bR^{1+2d}_T|^2} (|u - (u)_{\widehat Q_{ 4 r} (z_{ 2  })  \cap \bR^{1+2d}_T }|^2)^{1/2}_{\widehat Q_{  4 r} (z_{2})   \cap \bR^{1+2d}_T } \\
  & \le  N (d)  \,    r^{\alpha} [u]_{ \mathcal{L}^{2, \alpha}_{\text{kin}} (\bR^{1+2d}_T) }.
\end{aligned}
\end{equation}
Combining \eqref{eqB.1} - \eqref{eqB.3}, we prove \eqref{eqB.1.2} for a.e. $z_1, z_2 \in \bR^{1+2d}_T$. By continuity argument, \eqref{eqB.1.2} holds for all $z_1, z_2$.
\end{proof}

\section{Estimate for the model equation}
                               \label{section 3}
In this section, we assume that the coefficients $a^{i j}$ are independent of $x, v$ and satisfy Assumption  \ref{assumption 1.1}. We denote
\begin{equation}
            \label{eq3.0}
    P_0  = \partial_t - v \cdot D_x - a^{i j} (t) D_{v_i v_j}.
\end{equation}
Our goal  is to prove a mean-oscillation estimate for $(-\Delta_x)^{1/3} u$ and $D^2_v u$ (see Proposition \ref{proposition 3.3}).
As explained in Section \ref{section 1.3}, we split $u$ into a `caloric part' $u_c$ and a remainder $u_{\text{rem}}$.
The mean-square estimate of $u_{\text{rem}}$  is proved via Lemma \ref{lemma 3.2}.
To estimate the mean-square oscillation of $u_c$, we need to modify  the argument of Section 5 in \cite{DY_21a}.

\begin{proposition}
            \label{proposition 3.3}
Let $\nu \geq 2, \alpha \in (0, 1),  r > 0$ be   numbers, $\chi = \chi (t) \in L_{2, \text{loc}} (\bR_T)$, and $u \in S_2 (\bR^{1+2d}_T)$ (see \eqref{eq1.11}).
Then, there exists  $\theta = \theta (d) > 0$ and $N = N (d) > 0$ such for any  $z_0 \in \overline{\bR^{1+2d}_T}$,
  \begin{align}
    \label{eq3.3.0}
    I_1 &:=
    \bigg(|(-\Delta_x)^{1/3} u -  ((-\Delta_x)^{1/3} u)_{ Q_r (z_0) }|^2\bigg)^{1/2}_{Q_r (z_0)}\\
&\leq N  \nu^{-1} \delta^{-\theta}
    (|(-\Delta_x)^{1/3} u - ((-\Delta_x)^{1/3} u)_{Q_{\nu r} (z_0)}|^2)^{1/2}_{ Q_{\nu r } (z_0)}\notag\\
    &\quad + N  \nu^{1+2d}  \delta^{-\theta}  \sum_{k=0}^\infty 2^{-2k} \big(|P_0 u - \chi|^2\big)^{1/2}_{Q_{ 2\nu r, 2^{k+1}/\delta^2 (2\nu r)} (z_0)},\notag\\
       	I_2 &:=
 	\bigg(|D_v^2 u -  (D_v^2 u)_{ Q_r (z_0) }|^2\bigg)^{1/2}_{ Q_r (z_0) } \notag\\
&
  \leq
N   \nu^{-1} \delta^{-\theta}
    (|D_v^2 u - (D_v^2 u)_{Q_{\nu r} (z_0)}|^2)^{1/2}_{ Q_{\nu r} (z_0)  }\notag\\
&\quad    + N \nu^{-1} \delta^{-\theta}
    \sum_{k = 0}^{\infty}
	 2^{-k}
	 (|(-\Delta_x)^{1/3} u - ((-\Delta_x)^{1/3} u)_{ Q_{\nu r, 2^k \nu r} (z_0)}|^2)^{1/2}_{ Q_{\nu r, 2^k \nu r} (z_0)}\notag\\
	 &\quad  + N  \nu^{1+2d}  \delta^{-\theta}  \sum_{k=0}^\infty 2^{-k} \big(|P_0 u - \chi|^2\big)^{1/2}_{Q_{ 2\nu r, 2^{k+1}/\delta^2 (2\nu r)} (z_0)}\notag.
    \end{align}
\end{proposition}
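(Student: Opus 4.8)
The plan is to follow the splitting strategy announced in Section \ref{section 1.3}: freeze the cylinder $Q_r(z_0)$, decompose $u$ on a larger cylinder $Q_{\nu r}(z_0)$ (or rather $Q_{2\nu r}(z_0)$) into a caloric part $u_c$ solving $P_0 u_c = \chi(t)$ and a remainder $u_{\text{rem}} = u - u_c$ solving $P_0 u_{\text{rem}} = (P_0 u - \chi)\phi$ for a suitable cutoff $\phi = \phi(t,v)$ supported in $Q_{2\nu r}(z_0)$ and equal to $1$ on $Q_{\nu r}(z_0)$. For the remainder, I would invoke the $S_2$ estimate (Theorem \ref{theorem A.1}) to bound the $L_2(Q_r(z_0))$-norms of $D_v^2 u_{\text{rem}}$ and $(-\Delta_x)^{1/3} u_{\text{rem}}$ by the $L_2$-norm of $(P_0 u - \chi)\phi$ over the support of $\phi$; here the nonlocal nature of $(-\Delta_x)^{1/3}$ forces one to control the $x$-tails, which is exactly why the sums over $k$ with the cylinders $Q_{2\nu r,\, 2^{k+1}\delta^{-2}(2\nu r)}(z_0)$ (enlarged only in the $x$-direction) and the geometric weights $2^{-2k}$ resp.\ $2^{-k}$ appear. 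Tracking the dependence on $\delta$ through the $S_2$ estimate and the pointwise formula \eqref{1.9} produces the factor $\delta^{-\theta}$, and tracking the scaling (via Lemma \ref{lemma 2.0}, which records the exact symmetry $\widetilde P \widetilde u = r^2 (P u)(\widetilde z)$) produces the powers of $\nu$.

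For the caloric part $u_c$, the key point is that $P_0 u_c$ depends only on $t$, so $D_v^2 u_c$ and higher $v$-derivatives, as well as $(-\Delta_x)^{1/3} u_c$, satisfy good interior regularity: one can differentiate the equation in $x$ and $v$ freely (the equation is translation-invariant in those variables up to the Galilean shift built into $\rho$), and then a Poincar\'e-type inequality on the kinetic cylinder converts the $L_2$ bound on one more derivative into a decay estimate for the mean oscillation. Concretely, I expect to show
\[
\big(|D_v^2 u_c - (D_v^2 u_c)_{Q_r(z_0)}|^2\big)^{1/2}_{Q_r(z_0)} \le N\,\nu^{-1}\,\delta^{-\theta}\Big(\text{averages of } D_v^2 u_c \text{ and } (-\Delta_x)^{1/3}u_c \text{ over } Q_{\nu r}(z_0)\Big),
\]
the factor $\nu^{-1}$ coming from the gain of one scaling unit (the cylinder shrinks by $\nu$ and the extra derivative carries weight $r$). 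This is the step adapted from Section 5 of \cite{DY_21a}; the presence of $(-\Delta_x)^{1/3} u_c$ on the right for the $D_v^2$ estimate reflects the coupling between the $x$- and $v$-directions inherent in the transport term $-v\cdot D_x$, and is where one uses the pointwise formula \eqref{1.9} together with the $S_2$-regularity of $u_c$ to estimate $(-\Delta_x)^{1/3} u_c$ by local quantities plus the nonlocal tail sum.

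Finally I would recombine: write $u = u_c + u_{\text{rem}}$, so $D_v^2 u - (D_v^2 u)_{Q_r(z_0)}$ splits into the caloric oscillation (handled above, with $D_v^2 u_c$ and $(-\Delta_x)^{1/3} u_c$ averages over $Q_{\nu r}$ re-expressed via $u$ minus $u_{\text{rem}}$, absorbing the latter into the remainder terms) plus $D_v^2 u_{\text{rem}} - (D_v^2 u_{\text{rem}})_{Q_r(z_0)}$, which is dominated by $2\,(|D_v^2 u_{\text{rem}}|^2)^{1/2}_{Q_r(z_0)}$ and hence by the $S_2$ bound. The same bookkeeping handles $(-\Delta_x)^{1/3} u$, which is a bit cleaner since no $D_v^2$ term enters on the right. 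The main obstacle, I expect, is the careful treatment of the nonlocal operator $(-\Delta_x)^{1/3}$: one must split $(-\Delta_x)^{1/3}(u\phi$-type truncations$)$ into a near part (controlled by $S_2$) and a far part (controlled by the geometric tail sums), and one must verify that the scaling in Lemma \ref{lemma 2.0} interacts correctly with $(-\Delta_x)^{1/3}$ — namely that $(-\Delta_x)^{1/3}$ scales like $r^{-2}$ under $\widetilde z$, matching $D_v^2$ — so that the two quantities can be estimated on the same footing. Keeping the $\delta$-dependence explicit throughout (rather than absorbing it into a generic constant) is the other bookkeeping burden, but it follows mechanically from the corresponding dependence in Theorem \ref{theorem A.1}.
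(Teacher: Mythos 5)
Your proposal follows essentially the same architecture as the paper's proof: the caloric/remainder split, the remainder controlled by a weighted $S_2$-type estimate with geometric tails, and the caloric part controlled by interior regularity for $P_0 u_c = \chi(t)$ together with a nonlocal estimate that converts $D_x u_c$ (entering through the transport term) into oscillations of $(-\Delta_x)^{1/3} u_c$. The scaling via Lemma \ref{lemma 2.0} and the $\delta$-tracking are also as in the paper.

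There is, however, a genuine gap in how you set up $u_{\text{rem}}$. Writing $P_0 u_{\text{rem}} = (P_0 u - \chi)\phi$ and invoking the $S_2$ theory on $\bR^{1+2d}$ does not directly make sense: the term $\chi(t)\phi(t,v)$ is independent of $x$, hence not in $L_2(\bR^{1+2d})$, so Theorem \ref{theorem A.1} (iii) cannot be applied to the forcing $(P_0 u - \chi)\phi$ as a single datum. The paper avoids this by splitting $u_{\text{rem}} = u_1 + u_2$, where $u_1$ solves the genuine $S_2$ Cauchy problem with data $(P_0 u)\phi \in L_2$, and $u_2 = u_2(t,v)$ solves the purely parabolic (no transport) Cauchy problem $\partial_t u_2 - a^{ij}(t) D_{v_iv_j} u_2 = -\chi\phi$; since $u_2$ is $x$-independent, it trivially satisfies $P_0 u_2 = -\chi\phi$ without ever being $L_2$ in $x$. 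The same structural split must then be carried into the caloric part, $\overline{u}_c = \mathsf{u}_1 + \mathsf{u}_2(t,v)$ with $\mathsf{u}_1 \in S_2$, because the nonlocal lemma (Lemma \ref{lemma 3.5}) requires $S_2$-global integrability for the commutator/tail argument; the hypothesis of Lemma \ref{lemma 3.8} is stated precisely for such a decomposition. Without this, your caloric oscillation estimate has no rigorous footing.

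A second, smaller imprecision: the remainder is controlled not by the global $S_2$ estimate in Theorem \ref{theorem A.1}, but by Lemma \ref{lemma 3.2}, a local variant adapted from Lemma 5.2 of \cite{DY_21a} that works with data vanishing outside a bounded $(t,v)$-region and produces exactly the geometrically weighted cylinders $Q_{1, 2^{k+1}R/\delta^2}$ in the $x$-tails. Similarly, your "Poincar\'e-type inequality" heuristic for the caloric decay is really a Sobolev-embedding bootstrap ($L_2$ on higher derivatives to $L_\infty$, Lemma \ref{lemma 3.6}) together with the commutator estimate of Lemma \ref{lemma 3.5}(ii) that bounds $\|D_x u_c\|_{L_2}$ by the $(-\Delta_x)^{1/3} u_c$ oscillation series; your near-far split of the fractional Laplacian is a fair description of the latter, but it is worth recognizing that the $D_x u_c$ control (not merely the $(-\Delta_x)^{1/3}u_c$ control) is where that lemma earns its keep.
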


\begin{definition}
                \label{definition 3.0}
For  $-\infty \le T_1 < T_2\le \infty$, we write $u \in \llocxv ((T_1, T_2) \times \bR^{2d})$ if for any $\zeta = \zeta (x, v) \in C^{\infty}_0 (\bR^{2d})$, we have $u \zeta \in L_2 ((T_1, T_2) \times \bR^{2d})$. We define
$\slocxv ((T_1, T_2) \times \bR^{2d})$ in the same way as we defined $S_2 (G)$ (see \eqref{eq1.11}).
\end{definition}

\begin{lemma}[cf. Lemma 5.2 in \cite{DY_21a}]
         \label{lemma 3.2}
Let
\begin{itemize}
\item[--] $R\ge 1$
be a number,\\
\item[--]  $u\in \slocxv ((-1,0)\times \bR^{2d})$ be a function  such that  $u 1_{t < -1} \equiv 0$, and
\begin{equation}
                \label{eq3.2.0}
  \sum_{k=0}^\infty  2^{-2k -(3d/2)k}
\||u| + |D_v u|\|_{L_2(Q_{1,2^{k+1}R/\delta^2})} < \infty,
\end{equation}
\item[--] $f \in \llocxv ((-1,0)\times \bR^{2d})$ be a function vanishing outside $(-1,0)\times \bR^d\times B_1$ and
$(-\Delta_x)^{1/3} u \in \llocxv ((-1,0)\times \bR^{2d})$,\\
\item[--] $u$  satisfy $P_0 u  = f$ in $(-1,0)\times \bR^{2d}$.
\end{itemize}
Then, one has
\begin{equation}
\begin{aligned}
                \label{eq3.2.1}
&\||u|+|D_v u|+|D_v^2 u|\|_{L_2((-1,0)\times B_{R^3}\times B_R)}\\
&\le N (d)  \delta^{-1} \sum_{k=0}^\infty  2^{-k(k-1)/4}R^{-k}
\|f\|_{L_2(Q_{1,2^{k+1}R/\delta^2})},
\end{aligned}
\end{equation}
and, furthermore, there exists $\theta = \theta (d) > 0$ such that
\begin{equation}
\begin{aligned}
   \label{eq3.2.2}
&
(|(-\Delta_x)^{1/3} u|^2)^{1/2}_{Q_{1,R}}
\le N (d) \delta^{-\theta} \sum_{k=0}^\infty  2^{-2k}(f^2)^{1/2}_{Q_{1,2^{k}R/\delta^2}}.
\end{aligned}
\end{equation}
\end{lemma}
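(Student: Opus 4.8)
The plan is to treat the two conclusions \eqref{eq3.2.1} and \eqref{eq3.2.2} separately, but both via a dyadic decomposition of the forcing term in the $x$-variable combined with the $S_2$ a priori estimate of Theorem~\ref{theorem A.1}. The key point is that $f$ is supported in $|v|<1$ but can be spread out in $x$; since $P_0$ propagates along the drift $-v\cdot D_x$ on a unit time interval, mass starting in $B_1$ (in $v$) cannot travel more than a bounded distance in $x$, which is what produces the fast-decaying geometric series on the right-hand side. Concretely, I would write $f=\sum_{k\ge 0} f_k$ where $f_k$ localizes $f$ to the $x$-annulus $\{|x|\sim 2^k R/\delta^2\}$ (with $f_0$ the innermost ball), solve $P_0 u_k=f_k$ with zero initial data, and estimate each $u_k$ on the target cylinder $(-1,0)\times B_{R^3}\times B_R$. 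For $k=0$ the $S_2$ estimate gives the bound directly. For $k\ge 1$, the crucial gain comes from a Poincar\'e/energy-type inequality or a direct caloric-estimate argument: because the data of $u_k$ sits at $x$-distance $\gtrsim 2^k R/\delta^2$ from the region where we measure, and finite-speed-of-propagation-type reasoning along the Langevin flow forces a loss of order $2^{-k(k-1)/4}R^{-k}$ (this super-geometric factor is exactly the source of the unusual exponent in \eqref{eq3.2.1}). I expect this step — quantifying the decay rate of $u_k$ away from the support of $f_k$ — to be the main obstacle, and it is presumably where the argument has to follow "Lemma 5.2 in \cite{DY_21a}" closely, adapting the geometry of the kinetic cylinders $Q_{1,2^{k+1}R/\delta^2}$.

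For the first estimate \eqref{eq3.2.1}, I would combine: (a) the $S_2$ estimate applied to $u_k$ on a slightly enlarged cylinder, which controls $\||u_k|+|D_v u_k|+|D_v^2 u_k|\|_{L_2}$ by $\delta^{-1}\|f_k\|_{L_2}$ on the support cylinder plus a lower-order term involving $\||u_k|+|D_v u_k|\|_{L_2}$ on a larger cylinder; (b) the decay of that lower-order term, controlled by the summability hypothesis \eqref{eq3.2.0} together with the propagation bound, so that after summing in $k$ the lower-order contributions are absorbed or dominated by the right-hand side of \eqref{eq3.2.1}. The hypothesis $u 1_{t<-1}\equiv 0$ is used to apply the $S_2$ theory on $(-1,0)$ with trivial "initial" data, and the condition \eqref{eq3.2.0} guarantees all the series converge. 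The factor $\delta^{-1}$ (rather than $\delta^{-\theta}$) in \eqref{eq3.2.1} should come from a single application of the $S_2$ estimate whose constant is linear in $\delta^{-1}$, while the appearance of $\delta^{-2}$ inside the cylinder radii reflects the scaling $a^{ij}\ge\delta$ in the $D_{vv}$ term.

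For the second estimate \eqref{eq3.2.2}, the $L_2$-average of $(-\Delta_x)^{1/3}u$ on the unit cylinder, I would use the representation/mapping properties of $(-\Delta_x)^{1/3}$ together with the $S_2$ regularity of $u$: since $u$ solves $P_0 u=f$, differentiating the equation in $x$ (or rather applying $(-\Delta_x)^{1/3}$, which commutes with $P_0$ because the coefficients $a^{ij}(t)$ are $x$-independent) shows $(-\Delta_x)^{1/3}u$ solves $P_0\big((-\Delta_x)^{1/3}u\big)=(-\Delta_x)^{1/3}f$, but that forcing is not localized, so instead I would exploit the gain in $x$-regularity encoded in the $S_2$ scale (the "$1/3$" matching the homogeneity $|x|\sim |v|^3$) and interpolate between the $L_2$ bound on $u$ and the $L_2$ bound on $D_v^2 u$, or invoke the relevant hypoelliptic gain-of-derivative estimate. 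The pointwise formula \eqref{1.9} for $(-\Delta_x)^s$ with $s=1/3>1/2$ does not apply directly, so one must work with the duality definition; splitting $(-\Delta_x)^{1/3}$ into a near-field piece (handled by the local $S_2$ estimate, contributing the $f_0$ term and the $\delta^{-\theta}$) and a far-field tail (handled by the decay of the kernel $|y|^{-d-2/3}$ against the dyadically-decaying $L_2$ norms of $u$, contributing the $k\ge1$ terms with their $2^{-2k}$ weights) is the natural route, and the summability \eqref{eq3.2.0} again ensures convergence. I would expect \eqref{eq3.2.2} to be somewhat more delicate than \eqref{eq3.2.1} precisely because of this nonlocal term, but the decay mechanism is the same one already established for \eqref{eq3.2.1}.
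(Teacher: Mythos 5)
Your proposal goes in a genuinely different direction from what the paper does, and it has a real gap. The paper's proof of Lemma~\ref{lemma 3.2} is a short \emph{approximation} argument, not a from-scratch derivation: the quantitative estimates \eqref{eq3.2.1} and \eqref{eq3.2.2} are already established in Lemma~5.2 of \cite{DY_21a} for $u\in S_2$, and the entire content of the present lemma is to relax that membership to the local class $\slocxv$ under the summability hypothesis \eqref{eq3.2.0}. The paper does this by multiplying $u$ by cutoffs $\phi_n$ (with $\phi_n = 1$ on $\widetilde Q_n$ and controlled derivatives as in \eqref{eq3.2.3}), so that $u_n := u\phi_n\in S_2$, applies the \cite{DY_21a} lemma to $u_n$ with $f_n = f\phi_n + u P_0\phi_n - 2(aD_v u)\cdot D_v\phi_n$, estimates the commutator terms in $f_n$ by $O(n^{-1})$ times the weighted norms of $u$ and $D_vu$, and then passes to the limit as $n\to\infty$. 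Hypothesis \eqref{eq3.2.0} is precisely what makes those commutator contributions vanish in the limit. In your proposal, \eqref{eq3.2.0} plays no identifiable role — you cite it only as a blanket ``guarantees all the series converge,'' but your series involve $f_k$, not $u$, so this hypothesis is unused. That signals that you are solving a different problem.

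The more concrete gap is in the reconstruction step. You propose to write $f=\sum_k f_k$ as a dyadic decomposition in $x$, solve $P_0 u_k = f_k$ with zero initial data, and then estimate $u$ via $\sum_k u_k$. But nothing guarantees $u=\sum_k u_k$: each $u_k$ is the unique $S_2$ solution to its problem (Theorem~\ref{theorem A.1}), while $u$ is only assumed to lie in $\slocxv$. Uniqueness of the equation $P_0 u=f$, $u1_{t<-1}\equiv 0$ is not available in that larger class, so the identification fails precisely in the regime the lemma is designed to cover. You would also need $\sum_k u_k$ to converge and to lie in a space where such uniqueness holds, neither of which is supplied by your argument. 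Finally, a small but real slip: you write that the pointwise formula \eqref{1.9} does not apply to $(-\Delta_x)^{1/3}$ because ``$s=1/3>1/2$,'' but in fact $1/3<1/2$, so \eqref{1.9} is applicable; the paper uses a duality argument for $(-\Delta_x)^{1/3}u$ not because the exponent is too large, but because $u$ itself is not globally in $L_2$, so $(-\Delta_x)^{1/3}u$ must be handled distributionally until the cutoff limit is taken. The super-geometric factor $2^{-k(k-1)/4}R^{-k}$ that you correctly identify as ``the main obstacle'' is indeed established inside Lemma~5.2 of \cite{DY_21a} by a propagation-type argument; the present lemma is not the place to reprove it.
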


\begin{proof}
We may assume that the right-hand side of \eqref{eq3.2.2} is finite.
Let  $\phi_n, n \ge 1,$ be a sequence of $C^{\infty}_0 (\bR^{2d})$ functions satisfying $\phi_n = 1$ in $\widetilde Q_n$ and the bounds
 \begin{equation}
                \label{eq3.2.3}
        |\phi_n| \le N, \quad |D_v \phi_n| \le N/n, \quad
        |\partial_t  \phi_n| \le N/n^2, \quad |D_x \phi_n| \le N/n^3
 \end{equation}
with $N$ independent of $n$.

Note that $u_n: = u \phi_n \in S_2 ((-1, 0) \times \bR^{2d})$ satisfies the identities
$$
    P_0 u_n = f \phi_n + u P_0 \phi_n - 2 (a D_v u) \cdot D_v \phi_n =: f_n, \quad u_n 1_{t < -1} \equiv 0.
$$
Then, by Lemma 5.2 in \cite{DY_21a}, one has
\begin{align}
            \label{eq3.2.4}
&\||u_n|+|D_v u_n|+|D_v^2 u_n|\|_{L_2((-1,0)\times B_{R^3}\times B_R)}\\
&\le N (d)  \delta^{-1} \sum_{k=0}^\infty  2^{-k(k-1)/4}R^{-k}
\|f_n\|_{L_2(Q_{1,2^{k+1}R/\delta^2})}\notag
\end{align}
and
\begin{equation}
  \label{eq3.2.5}
(|(-\Delta_x)^{1/3} u_n|^2)^{1/2}_{Q_{1,R}}
\le N (d) \delta^{-\theta} \sum_{k=0}^\infty  2^{-2k}(f_n^2)^{1/2}_{Q_{1,2^{k}R/\delta^2}}.
\end{equation}

 By \eqref{eq3.2.3}, for any $r > 0$,
\begin{equation}
\begin{aligned}
                \label{eq3.2.6}
    \|f_n\|_{L_2(Q_{1, r})}& \le   \|f\|_{L_2(Q_{1, r})} + N (d, \delta) n^{-1} \||u|+|D_v u|\|_{L_2(Q_{1, r})}. 
    \end{aligned}
\end{equation}
Then, by using this and  \eqref{eq3.2.0}, and passing to the limit as $n \to \infty$  in \eqref{eq3.2.4}, we prove \eqref{eq3.2.1}.

Next, we prove the bound for $(-\Delta_x)^{1/3} u$.
For any   smooth cutoff function $\xi$ supported in $Q_{1, R}$, we have
\begin{align*}
&    \bigg| \int u \big((-\Delta_x)^{1/3} \xi \big) \, dz \bigg|
  =\lim_{n \to \infty}\bigg| \int u_n \big((-\Delta_x)^{1/3} \xi \big) \, dz \bigg|\\
&    \le \nlimsup_{n \to \infty} \|(-\Delta_x)^{1/3} u_n \|_{ L_2 (Q_{1, R})  } \|\xi\|_{ L_2 (Q_{1, R})  }.
\end{align*}
Finally,  due to the last inequality and a duality argument, the left-hand side of \eqref{eq3.2.2} is bounded by the limit  supremum of the right-hand side of \eqref{eq3.2.5} as $n \to \infty$.  Now \eqref{eq3.2.2} follows from the above, \eqref{eq3.2.6}, and \eqref{eq3.2.0}.
\end{proof}

The following `nonlocal' lemma is similar to  Lemma 5.5 of \cite{DY_21a} and Lemma 3.8 in \cite{DY_21b}. In the present authors' opinion, such `nonlocal' lemmas are the  technical novelties of the papers \cite{DY_21a} - \cite{DY_21b}, and the current article.
 \begin{lemma}
			\label{lemma 3.5}
   Let
  $
	u \in S_2((-4, 0)\times \bR^{2d})
  $
  be a function satisfying
$P_0 u = 0$  a.e.  in
  $(-1, 0) \times \bR^d \times B_1$.
   Then, the following assertions hold.

   $(i)$ We have $(-\Delta_x)^{1/3} u \in S_{2, \text{loc}} ((-1, 0) \times \bR^d \times B_1)$,
   and
   $$
    P_0 (-\Delta_x)^{1/3} u = 0 \quad \text{a.e. in} \, \,   (-1, 0) \times \bR^d \times B_1.
   $$

   $(ii)$ For any $r \in (0, 1)$,
   \begin{equation}
				\label{eq3.5.0}
	\begin{aligned}
     &	\| D_x u \|_{ L_2 (Q_r)} \\
   &\leq
	 N (d, r) \delta^{-4}  \sum_{k = 0}^{\infty}
	  2^{-k}	  (|(-\Delta_x)^{1/3} u - ((-\Delta_x)^{1/3} u)_{  Q_{1, 2^{k +2} }  }|^2)_{ Q_{1, 2^{k+2}}}^{ 1/2   },
	 \end{aligned}
   \end{equation}
   where $Q_{1, 2^k}$ is defined in \eqref{eq1.10}.
   \end{lemma}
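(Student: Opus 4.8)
The plan is to derive both assertions from one fact: since $a^{ij}=a^{ij}(t)$ depends neither on $x$ nor on $v$, the $x$-Fourier multiplier $(-\Delta_x)^{1/3}$ commutes with every term of $P_0$. This is combined with the $S_2$ estimates of Theorem~\ref{theorem A.1} and with the explicit convolution kernel of the operator $D_{x}(-\Delta_x)^{-1/3}$.

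For part $(i)$, I would mollify $u$ in the $x$-variable only, $u^{(\epsilon)}:=u*_{x}\eta_\epsilon$ with $\eta_\epsilon$ a standard mollifier on $\bR^{d}$. Mollification in $x$ commutes with $\partial_t$, with $v\cdot D_x$, and with $a^{ij}(t)D_{v_iv_j}$, so $u^{(\epsilon)}\in S_2((-4,0)\times\bR^{2d})$ and $P_0u^{(\epsilon)}=0$ in $(-1,0)\times\bR^d\times B_1$, while $(-\Delta_x)^{1/3}u^{(\epsilon)}\in L_2$ because $\widehat{u^{(\epsilon)}}$ decays rapidly in $\xi$. Since $(-\Delta_x)^{1/3}$ is an $x$-Fourier multiplier it commutes with $P_0$, so $w^{(\epsilon)}:=(-\Delta_x)^{1/3}u^{(\epsilon)}=\big((-\Delta_x)^{1/3}u\big)*_x\eta_\epsilon$ solves $P_0w^{(\epsilon)}=0$ in $(-1,0)\times\bR^d\times B_1$. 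Theorem~\ref{theorem A.1} applied to $u^{(\epsilon)}$ bounds $\|(-\Delta_x)^{1/3}u^{(\epsilon)}\|_{L_2}$ on compact subsets uniformly in $\epsilon$ (as $\|u^{(\epsilon)}\|_{L_2}\le\|u\|_{L_2}$), and Theorem~\ref{theorem A.1} applied to $w^{(\epsilon)}$ then gives a uniform bound for $\|w^{(\epsilon)}\|_{S_2}$ on slightly smaller cylinders. Since $w^{(\epsilon)}\to(-\Delta_x)^{1/3}u$ in $\mathcal{D}'$, a weak-compactness argument identifies the limit, giving $(-\Delta_x)^{1/3}u\in S_{2,\text{loc}}((-1,0)\times\bR^d\times B_1)$ together with $P_0(-\Delta_x)^{1/3}u=0$ there.

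For part $(ii)$, set $w=(-\Delta_x)^{1/3}u$. In the $x$-variable one has $\widehat{D_{x_k}u}(\xi)=i\xi_k|\xi|^{-2/3}\widehat{w}(\xi)$, i.e.
\begin{equation*}
    D_{x_k}u(t,x,v)=\lim_{\epsilon\to0}\int_{|y|>\epsilon}\Phi_k(y)\,w(t,x+y,v)\,dy,
\end{equation*}
where $\Phi_k$ is odd, homogeneous of degree $-d-\tfrac13$, and $|\Phi_k(y)|\le N(d)|y|^{-d-1/3}$. I would split this integral into an inner part ($|y|\lesssim1$) and an outer part, decomposing the latter into dyadic shells $\{2^{3k}\lesssim|y|<2^{3(k+1)}\}$, $k\ge0$. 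Using oddness of $\Phi_k$ to subtract $w(t,x,v)$ in the inner part and the constants $c_k:=(w)_{Q_{1,2^{k+2}}}$ in the $k$-th shell, and noting that for $z=(t,x,v)\in Q_r$ with $r<1$ the point $(t,x+y,v)$ runs, up to a fixed dilation of the radius, over $Q_{1,2^{k+2}}$ while $(2^{3k})^{-d-1/3}$ times the shell volume is comparable to $2^{-k}$, a Minkowski and Cauchy--Schwarz argument bounds the outer part in $L_2(Q_r)$ by $N(d,r)\sum_{k\ge0}2^{-k}\big(|w-(w)_{Q_{1,2^{k+2}}}|^2\big)^{1/2}_{Q_{1,2^{k+2}}}$. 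The inner part is a \emph{nonlocal operator of positive order} $\tfrac13$; I would note it annihilates constants, hence equals the same operator applied to $w-(w)_{Q_{1,4}}$, and invoke the interior regularity of $w$ — legitimate since $P_0w=0$ by part $(i)$ — so that Theorem~\ref{theorem A.1}, applied to $w$ (and once more to $(-\Delta_x)^{1/3}w$ if needed) together with an interpolation in $x$, controls the inner part by $\delta^{-\theta}\big(|w-(w)_{Q_{1,4}}|^2\big)^{1/2}_{Q_{1,4}}$, which is absorbed into the $k=0$ term. Collecting the powers of $\delta$ coming from these applications of Theorem~\ref{theorem A.1} produces the factor $\delta^{-4}$.

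The main obstacle is the inner part of $(ii)$: as an operator of positive order $\tfrac13$ it is not bounded on $L_2$, so it cannot be estimated by $\|w\|_{L_2}$ alone. One must genuinely use the extra $x$-regularity of $w$ supplied by part $(i)$ (that $(-\Delta_x)^{1/3}w=(-\Delta_x)^{2/3}u$ is again locally square integrable because $w$ solves the equation) and then re-express this gain as an oscillation of $w$ through Theorem~\ref{theorem A.1}. The remaining work — matching the dyadic shells to the cylinders $Q_{1,2^{k+2}}$ with the correct weights $2^{-k}$ (the $x$-variable scaling like the cube of the radius) and tracking constants — is routine once the decomposition is set up.
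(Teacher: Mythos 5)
Your part $(i)$ follows essentially the same route as the paper: mollify in $x$ only, exploit that $(-\Delta_x)^{1/3}$ commutes with each term of $P_0$ when $a=a(t)$, deduce $P_0(-\Delta_x)^{1/3}u_\varepsilon=0$, obtain uniform local $S_2$ bounds, and pass to the limit. (The paper applies the interior estimate Lemma~\ref{lemma A.3} at this stage, which is the estimate actually available for $S_{2,\text{loc}}$ data; otherwise your plan matches.)

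For part $(ii)$ you take a genuinely different decomposition. You truncate the kernel of $\mathcal{A}:=D_x(-\Delta_x)^{-1/3}$ at $|y|\sim 1$, producing an inner operator of positive order $1/3$ applied to $w-(w)_{Q_{1,4}}$ and an outer dyadic sum; your treatment of the outer part (oddness of the kernel, Cauchy--Schwarz on shells, weights $2^{-k}$ from the homogeneity $(2^{3k})^{-d-1/3}\cdot 2^{3dk}$) matches the paper's estimate of $J_2$ almost verbatim. The paper instead writes $\eta^2 D_x u_\varepsilon=\eta\,\mathcal{A}(\eta g)+\eta\,[\eta,\mathcal{A}]g$ for a cutoff $\eta$ and $g=(-\Delta_x)^{1/3}u_\varepsilon-((-\Delta_x)^{1/3}u_\varepsilon)_{Q_{1,4}}$: the piece $\mathcal{A}(\eta g)=\mathcal{R}_x(-\Delta_x)^{1/6}(\eta g)$ is a genuine Fourier multiplier of order $1/3$ applied to a \emph{localized} function, so it is controlled via H\"ormander--Mikhlin together with Theorem~\ref{theorem A.1} applied to $\eta g$ (using $P_0(\eta g)=gP_0\eta-2(aD_v\eta)\cdot D_v g$ and Lemma~\ref{lemma A.3} for $D_v g$), while the commutator kernel is non-singular near the diagonal because of the extra factor $\eta(x)-\eta(x-y)\lesssim|y|$. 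The two singular pieces carry the same information; the paper's commutator trick simply builds the localization directly into the decomposition, so that the resulting multiplier acts on a globally $S_2$ function and Theorem~\ref{theorem A.1} applies with no further ado.

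That last point is where your plan has a real, though fillable, gap: you say you would invoke Theorem~\ref{theorem A.1} ``applied to $w$ (and once more to $(-\Delta_x)^{1/3}w$ if needed),'' but $w$ and $(-\Delta_x)^{1/3}w$ are only in $S_{2,\text{loc}}$ of the strip, not in the global space $S_2(\bR^{1+2d}_T)$ that Theorem~\ref{theorem A.1} requires. You must first cut off. Concretely, take $\zeta=\zeta(t,v)$ smooth, equal to $1$ on a neighborhood of $\overline{(-r^2,0)\times B_r}$, supported in $(-1,0)\times B_1$; since $T_{\mathrm{inn}}$ acts only in $x$, it commutes with multiplication by $\zeta$ and annihilates $x$-constants, so $T_{\mathrm{inn}}\big(w-(w)_{Q_{1,4}}\big)=T_{\mathrm{inn}}\big(\zeta(w-(w)_{Q_{1,4}})\big)$ on $Q_r$. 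Then $P_0\big(\zeta(w-(w)_{Q_{1,4}})\big)=\big(w-(w)_{Q_{1,4}}\big)P_0\zeta-2(aD_v\zeta)\cdot D_v w$, Theorem~\ref{theorem A.1} gives the $(-\Delta_x)^{1/3}$ bound with a factor $\delta^{-1}\cdot|a|\lesssim\delta^{-2}$, and Lemma~\ref{lemma A.3} replaces $\|D_v w\|$ by $\delta^{-2}\|w-(w)_{Q_{1,4}}\|_{L_2(Q_1)}$; stacking these gives the stated $\delta^{-4}$. Once this cutoff step is made explicit, your proposal is complete and reproduces the lemma.
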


   \begin{proof}
   First, multiplying $u$ by a suitable cutoff function $\phi = \phi (t)$ and using Corollary  \ref{corollary A.4},  we conclude that $(-\Delta_x)^{1/3} u \in L_2 ((-1, 0) \times \bR^{2d})$,
   and hence, the series on the right-hand side of \eqref{eq3.5.0} converges.

  $(i)$    Let $u_{\varepsilon}$ be the mollification of $u$ in the $x$ variable with the standard mollifier and note that $\partial_t u_{\varepsilon} \in \llocxv ((-4, 0) \times \bR^{2d})$.
  Furthermore, let $\zeta$ be either $u_{\varepsilon}$ or $\partial_t  u_{\varepsilon}$, or $D^2_v u_{\varepsilon}$.
   Then, by the  formula \eqref{1.9}, for a.e. $t, v \in (-1, 0) \times B_1$,
   \begin{itemize}
   \item[--]
   $
         \zeta (t, \cdot, v) \in C^k_b (\bR^d), \quad k  \in \{1, 2 \ldots\},
   $
   \item[--]  $(-\Delta_x)^{1/3} \zeta$ is a well defined function given by \eqref{1.9} with $u$ replaced with $\zeta$,
   \item[--]
   $
         (-\Delta_x)^{1/3} A u_\varepsilon (t, \cdot, v) \equiv A (-\Delta_x)^{1/3} u_\varepsilon (t, \cdot, v), \quad A = \partial_t, D^2_v.
   $
   \end{itemize}
   By  the above facts, we conclude
   \begin{equation}
            \label{eq2.1.1}
        P_0 (-\Delta_x)^{1/3} u_{\varepsilon} = 0 \quad \text{a.e. in} \, \, (-1, 0) \times \bR^d \times B_1.
   \end{equation}
   Consequently, by the interior $S_2$ estimate (see Lemma \ref{lemma A.3}), for any $0 < r < 1$,
    $$
        \||(\partial_t  - v \cdot D_x) (-\Delta_x)^{1/3} u_{\varepsilon}| + |D^2_v  (-\Delta_x)^{1/3} u_{\varepsilon}|\|_{L_2 (Q_r) } \le N  \|(-\Delta_x)^{1/3} u\|_{L_2 (Q_1) },
    $$
    where $N = N (d, \delta, r)$.
   Passing to the limit as $\varepsilon \to 0$ in the above inequality and in \eqref{eq2.1.1}, we prove the assertion $(i)$.

$(ii)$
 We inspect the argument of Lemma 5.5 in \cite{DY_21a}. In the sequel, $N = N (d, r)$.
Let  $\eta \in C^{\infty}_0 (\widetilde Q_{(r+1)/2})$ be a function such that $\eta = 1$ in $Q_r$ and denote
$$
    g = (-\Delta_x)^{1/3} u_{\varepsilon} - ((-\Delta_x)^{1/3} u_{\varepsilon})_{Q_{1, 4}}.
$$
We decompose $\eta^2 D_x u$ in the following way:
\begin{align*}
    	\eta^2 D_x u_{\varepsilon}
    = \eta  (\mathcal{L} g + \text{Comm}),
\end{align*}
where
\begin{align*}
	\mathcal{L} g = \cR_x  (-\Delta_x)^{1/6} (g \eta),\quad
\text{Comm} =  \eta D_x u_{\varepsilon} - \cR_x  (-\Delta_x)^{1/6} (g \eta),
\end{align*}
and  $\cR_x = D_x (-\Delta_x)^{-1/2}$ is the Riesz transform.

\textit{Estimate of $\mathcal{L}  g$.} By \eqref{eq2.1.1},
$$
    P_0 (g \eta) = g P_0 \eta - 2 (a D_v \eta) \cdot D_v g \quad \text{in} \, \, (-1, 0) \times \bR^d \times B_1.
$$
Then, by Theorem \ref{theorem A.1} and the fact that $|a| \le \delta^{-1}$, we have
\begin{align*}
   & \|(-\Delta_x)^{1/3} (g \eta)\|_{ L_2 (\bR^{1+2d}_0)} \le N \delta^{-1} \||g P_0 \eta| + |(a D_v \eta) \cdot D_v g| \|_{ L_2 (\bR^{1+2d}_0)} \\
    & \le N \delta^{-2} \||g|+|D_v g|\|_{ L_2 (Q_{(r + 1)/2})  }.
\end{align*}
Furthermore, by \eqref{eq2.1.1} and the interior $S_2$ estimate in Lemma  \ref{lemma A.3}, the last term is bounded by
$$
    N \delta^{-4} \|g\|_{ L_2 (Q_{1})  }.
$$
Finally, due to the $L_p$-boundedness of the Riesz transform and the H\"ormander-Mikhlin inequality, we have
\begin{equation}
\begin{aligned}
            \label{eq3.5.1}
    \|\mathcal{L} g\|_{ L_2 (Q_{r})  } & \le N (d) \||(-\Delta_x)^{1/3} (\eta g)| + |\eta g|\|_{ L_2 (\bR^{1+2d}_0)}\\
    & \le  N \delta^{-4} \|g\|_{ L_2 (Q_{1})  }.
\end{aligned}
\end{equation}

\textit{Estimate of $\text{Comm}$.} We denote
$
    \mathcal{A} = D_x (-\Delta_x)^{-1/3}.
$
Since $u_{\varepsilon} \in C^2_0 (\bR^d)$ (see Section \ref{section 1.5}) for a.e. $t, v \in (-1, 0) \times B_1$ and $x \in \bR^d$, by Lemma \ref{lemma B.5} $(ii)$,
$$
    D_x g (z) \equiv \mathcal{A} (-\Delta_x)^{1/3} g (z).
$$
Hence, we have
$$
    \text{Comm} =   \eta (\mathcal{A} g) - \mathcal{A} (\eta g).
$$
By  the explicit representation of $\mathcal{A}$ (see Lemma \ref{lemma B.5} $(i)$) and the oddness of the kernel $y |y|^{-d- 4/3}$,
  for any $z  \in Q_r$,  we have
\begin{align*}
      &  \text{Comm} (z) = \int \big(\eta (t, x, v) - \eta (t, x-y, v)\big)
    g (t, x - y, v) \frac{y}{ |y|^{d+4/3}} \, dy = J_1 + J_2\\
    &: =\int_{ |y| < 8  } \big(\eta (t, x, v) - \eta (t, x-y, v)\big)
    g (t, x - y, v) \frac{y}{ |y|^{d+4/3}} \, dy+ \eta (t, x, v)\\
    &\quad \cdot \sum_{k=2}^{\infty} \int_{ 2^{3(k-1)} < |y| < 2^{3k} }
    \big((-\Delta_x)^{1/3} u_{\varepsilon} (t, x - y, v) - ((-\Delta_x)^{1/3} u_{\varepsilon})_{ Q_{1, 2^k}    } \big) \frac{y}{ |y|^{d+4/3}} \, dy.
\end{align*}
By the Minkowski inequality,
\begin{equation}
\begin{aligned}
            \label{eq3.5.2}
    \|J_1\|_{ L_2 (Q_r)  } \le   N (d, r) \|g\|_{ L_2 (Q_{1, 4 }) }.
\end{aligned}
\end{equation}
By the Cauchy-Schwartz inequality, for any $z \in Q_r$,
\begin{align*}
    |J_2 (z)| &\le N (d) \sum_{k=2}^{\infty} 2^{-k} \bigg(\fint_{ 2^{3(k-1)} < |y| < 2^{3k} }
    \big|(-\Delta_x)^{1/3}u_{\varepsilon} (t, x - y, v) \\
    &\qquad\qquad- ((-\Delta_x)^{1/3} u_{\varepsilon})_{ Q_{1, 2^k}    }\big|^2 \, dy\bigg)^{1/2}.
\end{align*}
Then, by using Minkowski inequality again, we get
\begin{equation}
\begin{aligned}
            \label{eq3.5.3}
    \|J_2\|_{ L_2 (Q_r)  } \le N (d) \sum_{k =  2 }^{\infty} 2^{-k}
    \big(|(-\Delta_x)^{1/3}u_{\varepsilon}
    - ((-\Delta_x)^{1/3} u_{\varepsilon})_{ Q_{1, 2^k}    }|^2\big)^{1/2}_{Q_{1, 2^k}}.
\end{aligned}
\end{equation}

 Finally, combining \eqref{eq3.5.1} - \eqref{eq3.5.3}, we obtain \eqref{eq3.5.0} with $u$ replaced with $u_{\varepsilon}$. Passing to the limit as $\varepsilon \to 0$, we prove \eqref{eq3.5.0}.
   \end{proof}

\begin{lemma}[Lemma 5.6 $(i)$  in \cite{DY_21a}]
            \label{lemma 3.7}
Let  $u\in S_{2,\text{loc}}((-1, 0) \times \bR^{2d})$
be a function such that
$P_0 u   = 0$
in $(-1, 0) \times \bR^d \times B_1$.
Then  for any $m, l \ge 0$ and $j=0,1$, there exists   $\theta = \theta (d, j, l, m) > 0$ such that for any $R \in (1/2, 1]$,
$$ \|\partial_t^j D_x^l D_v^{m}  u\|_{ L_{\infty} (Q_{1/2})  }
\leq	
N (d, j, l, m,  R)  \delta^{-\theta} \|u\|_{ L_2 (Q_R) }.
$$
\end{lemma}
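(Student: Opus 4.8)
The plan is to treat this as an interior regularity statement for the hypoelliptic operator $P_0$ with time-measurable coefficients, obtained by bootstrapping the $S_2$ estimate of Lemma \ref{lemma A.3} together with the fact (Lemma \ref{lemma 3.5}) that $(-\Delta_x)^{1/3}$ commutes with $P_0$ and preserves the property $P_0 w = 0$ on the smaller domain. The starting observation is that since $v \cdot D_x u = \partial_t u - a^{ij} D^2_v u$ already lives in $L_2$ on interior cylinders (by the definition of $S_{2,\text{loc}}$ and $a \in L_\infty$), and since $u$ is caloric for $P_0$, one should expect infinitely many derivatives in $v$ (from the ellipticity in $v$), and then, via the transport term $v \cdot D_x$ and the commutation with the fractional Laplacian, also derivatives in $x$ and $t$. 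So the strategy is: (1) upgrade $L_2$ control to $L_\infty$ control of $u$ itself on slightly shrunk cylinders; (2) iterate to get $L_\infty$ bounds on $D_v^m u$; (3) use Lemma \ref{lemma 3.5} to transfer regularity to $(-\Delta_x)^{1/3} u$, hence (after $l/2$ applications, with an integer/non-integer bookkeeping) to $D_x^l u$; (4) differentiate the equation to pick up $\partial_t^j$.

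First I would establish the case $j=l=m=0$: for $R \in (1/2,1]$, choose an intermediate radius and use the interior $S_2$ estimate of Lemma \ref{lemma A.3} to bound $\|D_v u\| + \|D_v^2 u\| + \|\partial_t u - v\cdot D_x u\|$ on the intermediate cylinder by $\|u\|_{L_2(Q_R)}$; since the equation gives $v\cdot D_x u$ in terms of these, one gets $u \in S_2$ with all the natural kinetic derivatives controlled, and then a Sobolev-type embedding for the kinetic Sobolev space $S_2$ into $L_\infty$ on a still smaller cylinder yields $\|u\|_{L_\infty(Q_{1/2})} \le N\delta^{-\theta}\|u\|_{L_2(Q_R)}$. (If no such embedding is available off-the-shelf one iterates $S_2 \to S_p$ for larger $p$ by repeatedly reusing Lemma \ref{lemma A.3} with $L_p$ exponents, then embeds; the $\delta$-dependence stays polynomial because each application of the $S_2$/$S_p$ estimate contributes a fixed negative power of $\delta$, and the number of steps depends only on $d$.) To handle $D_v^m u$ one notes that if $P_0 u = 0$ on a cylinder then, since $a^{ij}=a^{ij}(t)$ does not depend on $v$ or $x$, $P_0 (D_{v}^\beta u) = 0$ on a slightly smaller cylinder for every multi-index $\beta$ — here $D_{v}^\beta$ commutes with $\partial_t$ and with $a^{ij}D^2_v$, and $D_v(v\cdot D_x u) = v\cdot D_x(D_v u) + D_x u$, so there is a lower-order term $D_x u$, but that is itself controlled by the $S_2$ estimate applied to $u$ — so an induction on $|\beta|$, shrinking the cylinder a little at each step, gives the $L_\infty$ bound on $D_v^m u$ with a $\delta$-power depending on $d$ and $m$.

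Next, for the $x$-derivatives I would use Lemma \ref{lemma 3.5}: $P_0 u = 0$ on $(-1,0)\times\bR^d\times B_1$ implies $P_0\big((-\Delta_x)^{1/3}u\big)=0$ on the same domain and $\|D_x u\|_{L_2(Q_r)}$ is controlled by a convergent series in the oscillations of $(-\Delta_x)^{1/3}u$ over dilated cylinders, hence (after using part (i) of that lemma to iterate, together with Corollary \ref{corollary A.4} to get $(-\Delta_x)^{1/3}u \in L_2$ globally in a cylinder) by $\|u\|_{L_2(Q_R)}$; combining with steps (1)–(2) applied to $(-\Delta_x)^{1/3}u$ in place of $u$ (legitimate since it is again caloric) promotes this to an $L_\infty$ bound on $D_x u$ and, iterating $\lceil l/2\rceil$ more times, on $D_x^l u$. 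Finally $\partial_t u = v\cdot D_x u + a^{ij}D^2_v u$ pointwise a.e.; since the coefficients $a^{ij}$ are only measurable in $t$ one cannot differentiate this identity in $t$ directly, but for $j=1$ the right-hand side is already under control from the bounds on $D_x u$ and $D^2_v u$ just obtained (with $v$ bounded on $Q_{1/2}$), and the statement only asks for $j\in\{0,1\}$, so this closes the estimate. All $\delta$-exponents are finite sums of the finitely many $\theta$'s produced along the way, so the final $\theta=\theta(d,j,l,m)$ is well defined.

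The main obstacle is the bookkeeping that keeps the constant's dependence polynomial in $\delta^{-1}$ while passing from $L_2$ to $L_\infty$ and from $v$-derivatives to $x$- and $t$-derivatives: each invocation of Lemma \ref{lemma A.3}, Lemma \ref{lemma 3.5}, and the Sobolev embedding must be tracked so that the number of iterations depends only on $d$ (and $j,l,m$), not on $\delta$. A secondary technical point is justifying the commutation $P_0(-\Delta_x)^{1/3} = (-\Delta_x)^{1/3}P_0$ and the differentiation $P_0 D_v^\beta u = (\text{l.o.t.})$ at the level of the possibly non-smooth $u$ — this is handled, as in the proof of Lemma \ref{lemma 3.5}, by mollifying in $x$ (and, if needed, in $v$), proving the estimate for the mollifications, and passing to the limit.
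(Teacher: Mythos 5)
The paper itself offers no proof of this lemma --- it is quoted verbatim as Lemma 5.6$(i)$ of \cite{DY_21a}, and the body of the paper only uses it. So there is no internal argument to compare against; I can only judge your sketch on its own terms. Your overall plan (ellipticity in $v$ for $D_v^m u$, the $v$--$x$ commutator and a Bouchut-type fractional gain for $D_x^l u$, and the equation itself for $\partial_t$, with $L_2\to L_\infty$ by iterated interior estimates and Sobolev embedding) captures the right spirit, but there are genuine gaps around the $x$-derivatives, and I do not think the sketch as written closes them.

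First, in setting up the $D_v^m$ induction you claim the commutator term $D_x u$ ``is itself controlled by the $S_2$ estimate applied to $u$.'' It is not. The interior estimate in Lemma \ref{lemma A.3} controls $D_v u$, $D^2_v u$, and $\partial_t u - v\cdot D_x u$ on a smaller cylinder in terms of $\|u\|_{L_2(Q_R)}$, but it gives no control whatsoever of $D_x u$; obtaining \emph{any} local $L_2$ bound on $D_x u$ from local information about $u$ is precisely the nontrivial hypoelliptic content of the lemma. As written, step 2 uses $D_x u$ before step 3 produces it, so the bootstrap is circular; even after reordering, you still need an argument for $D_x u$ that you do not have.

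Second, the appeal to Lemma \ref{lemma 3.5} does not apply under the present hypotheses. That lemma assumes $u \in S_2((-4,0)\times\bR^{2d})$ \emph{globally} (the time interval is longer, and the $L_2$ norm is over all of $\bR^{2d}$), whereas Lemma \ref{lemma 3.7} only assumes $u \in S_{2,\text{loc}}((-1,0)\times\bR^{2d})$. Moreover the bound \eqref{eq3.5.0} is a series over the cylinders $Q_{1,2^{k+2}}$, whose $x$-radius $2^{3(k+2)}$ grows without bound, so even when the series converges it produces a constant involving the \emph{global} $L_2$ norm of $(-\Delta_x)^{1/3}u$ on $(-1,0)\times\bR^{2d}$, not $\|u\|_{L_2(Q_R)}$. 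Passing from that to a bound by $\|u\|_{L_2(Q_R)}$ would require a real localization argument: cutting $u$ off in $x$ as well as in $t,v$, controlling the commutators of the cutoff with the nonlocal operator $(-\Delta_x)^{1/3}$, and iterating --- none of which is sketched, and this is where the actual work lies. (As a minor bookkeeping point, $(-\Delta_x)^{1/3}$ has symbol $|\xi|^{2/3}$, so you need roughly $\lceil 3l/2\rceil$ iterations, not $\lceil l/2\rceil$, to reach $D_x^l$.) The $L_2\to L_\infty$ upgrade and the treatment of $\partial_t u$ via the equation are fine in outline, but the heart of the interior hypoelliptic estimate with only local $L_2$ data --- the local control of $D_x u$ --- is not established by your argument.
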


\begin{lemma}
            \label{lemma 3.6}
Let  $u\in S_{2,\text{loc}}((-4, 0) \times \bR^{2d})$
be a function such that
$P_0 u (z)  = \chi$
in $(-1, 0) \times \bR^d \times B_1$, where $\chi = \chi (t)$.
Then, for any $l, m \ge 0$ and $j=0,1$ such that $j+l+m \ge 1$, there exists $\theta = \theta (d, j, l, m) > 0$ such that
\begin{equation}
    \begin{aligned}
           \label{eq3.6.0}
   \, & \|\partial_t^j D^l_x D_v^{m+2}   u\|_{ L_{\infty} (Q_{1/2})  }
    \\
  &  \leq N (d, j,  l, m)   \delta^{-\theta}  \big( \|D_v^2 u   - (D_v^2 u)_{Q_1} \|_{L_2 (Q_1)} + \|D_x u\|_{L_2 (Q_1)}\big).\\
\end{aligned}
\end{equation}
\end{lemma}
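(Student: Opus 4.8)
\textbf{Proof plan for Lemma \ref{lemma 3.6}.}
The plan is to reduce every derivative appearing on the left of \eqref{eq3.6.0} to a derivative of a function that solves the \emph{homogeneous} equation $P_0(\cdot)=0$ in $(-1,0)\times\bR^d\times B_1$, and then quote Lemma \ref{lemma 3.7}. A preliminary (routine) point: $S_{2,\text{loc}}$ only controls $D_v$, $D_v^2$ and $\partial_t-v\cdot D_x$, so $D_xu$, $D_v^3u$, etc. are not a priori locally $L_2$; mollifying $u$ in the $x$ variable as in the proof of Lemma \ref{lemma 3.5}, using that this mollification commutes with $P_0$ and with $\chi(t)$, and invoking the interior $S_2$ regularity for $P_0$ (Lemma \ref{lemma A.3}) applied to $D_{x_p}u_\varepsilon$, lets us assume $u$ is smooth in $(x,v)$ on the interior of $(-1,0)\times\bR^d\times B_1$; the estimate for $u$ then follows by passing to the limit (the right side of \eqref{eq3.6.0} being finite is exactly what makes $D_xu_\varepsilon\to D_xu$ and $D_v^2u_\varepsilon-(D_v^2u_\varepsilon)_{Q_1}\to D_v^2u-(D_v^2u)_{Q_1}$ meaningful in $L_2(Q_1)$). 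Now \textbf{Step 1} (terms with an $x$-derivative): since $a=a(t)$, $\chi=\chi(t)$, differentiating $P_0u=\chi$ in $x_p$ gives $P_0(D_{x_p}u)=0$; Lemma \ref{lemma 3.7} applied to $D_{x_p}u$, rescaled via Lemma \ref{lemma 2.0} so it holds between any nested cylinders $Q_{r'}\subset Q_r\subset Q_1$, yields $\|\partial_t^jD_x^lD_v^m D_{x_p}u\|_{L_\infty(Q_{3/4})}\le N(d,j,l,m)\delta^{-\theta}\|D_xu\|_{L_2(Q_1)}$ for all $j,l,m$. In particular every term on the left of \eqref{eq3.6.0} with $l\ge1$ equals $\partial_t^jD_x^{l-1}D_v^{m+2}(D_xu)$ and is already controlled by $N\delta^{-\theta}\|D_xu\|_{L_2(Q_1)}$, and the same is true for $D_{v_l}D_{x_k}u$, $D_{x_kx_l}u$ and all their derivatives on $Q_{3/4}$.

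\textbf{Step 2} (purely $v$-derivatives, via an explicit corrector): it remains to bound $\partial_t^jD_v^{m+2}u$ on $Q_{1/2}$ for $l=0$. Fix $k,l$, set $c_{kl}=(D_{v_kv_l}u)_{Q_1}$ and $\tilde w=D_{v_kv_l}u-c_{kl}$. From the commutation identity $P_0(D_{v_p}f)=D_{v_p}(P_0f)+D_{x_p}f$ and $P_0(\text{const})=0$ one gets $P_0\tilde w=g^{kl}:=D_{v_l}D_{x_k}u+D_{v_k}D_{x_l}u$ in $(-1,0)\times\bR^d\times B_1$. The key observation is that $g^{kl}$ can be \emph{integrated}: the corrector
\[
   \psi^{kl}:=t\,(D_{v_l}D_{x_k}u+D_{v_k}D_{x_l}u)-t^2\,D_{x_kx_l}u
\]
satisfies $P_0\psi^{kl}=g^{kl}$ there. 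Indeed, using $P_0(D_{v_l}D_{x_k}u)=D_{x_kx_l}u$ (Step 1's commutation identity, since $P_0(D_{x_k}u)=0$) and $P_0(D_{x_kx_l}u)=0$, a direct computation gives $P_0(t\,D_{v_l}D_{x_k}u)=D_{v_l}D_{x_k}u+t\,D_{x_kx_l}u$ and $P_0(t^2D_{x_kx_l}u)=2t\,D_{x_kx_l}u$, so the $t\,D_{x_kx_l}u$ terms cancel. Hence $v^{kl}:=\tilde w-\psi^{kl}$ solves $P_0v^{kl}=0$ in $(-1,0)\times\bR^d\times B_1$. By Step 1, $\psi^{kl}$ and all its derivatives are bounded on $Q_{3/4}$ by $N\delta^{-\theta}\|D_xu\|_{L_2(Q_1)}$, while $\|\tilde w\|_{L_2(Q_1)}\le\|D_v^2u-(D_v^2u)_{Q_1}\|_{L_2(Q_1)}$; thus $\|v^{kl}\|_{L_2(Q_{3/4})}\le N\delta^{-\theta}M$, $M$ being the right side of \eqref{eq3.6.0}. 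Lemma \ref{lemma 3.7} applied to $v^{kl}$ (rescaled to $Q_{1/2}\subset Q_{3/4}$) bounds $\partial_t^jD_x^lD_v^m v^{kl}$ on $Q_{1/2}$ by $N\delta^{-\theta}M$ for all $j,l,m$. Since $D_{v_kv_l}u-c_{kl}=v^{kl}+\psi^{kl}$ and $c_{kl}$ is constant, for every $m\ge1$ and $j\in\{0,1\}$ we obtain $\|\partial_t^jD_v^mD_{v_kv_l}u\|_{L_\infty(Q_{1/2})}\le N\delta^{-\theta}M$ (this is the $l=0$, $m\ge1$ case of \eqref{eq3.6.0}), and for $j=1$, $m=0$, $\|\partial_tD_{v_kv_l}u\|_{L_\infty(Q_{1/2})}=\|\partial_t(v^{kl}+\psi^{kl})\|_{L_\infty(Q_{1/2})}\le N\delta^{-\theta}M$ in the same way. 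Summing over $k,l$ completes the proof.

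I expect the main obstacle to be the regularity bookkeeping in the preliminary reduction rather than the algebra: one must carefully justify, through the $x$-mollification together with interior $S_2$ regularity for the homogeneous operator $P_0$, that all the derivatives $D_xu$, $D_{v_l}D_{x_k}u$, $D_v^3u$ and so on are genuine interior-smooth functions and that the limit $\varepsilon\to0$ transfers the bounds back to $u$. Once this is set up, both steps are short, the engine being Lemma \ref{lemma 3.7} and the elementary identity $P_0\psi^{kl}=g^{kl}$. (It is worth noting that the exclusion $j+l+m\ge1$ is necessary: the argument controls only the oscillation $\|D_v^2u-(D_v^2u)_{Q_1}\|_{L_2(Q_1)}$, not $D_v^2u$ itself, so $\|D_v^2u\|_{L_\infty(Q_{1/2})}$ is not estimated.)
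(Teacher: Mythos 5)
Your proposal is correct, and it takes a genuinely different route from the paper while landing in the same place through Lemma \ref{lemma 3.7}. The paper's proof has three steps: it first derives, by cases and a bootstrap through the interior $S_2$ estimate (Lemma \ref{lemma A.3}), $L_2$ bounds for all $\partial_t^j D_x^l D_v^m u$ with $j\le 1$, $j+l+m\ge 1$ in terms of $\|D_v u\|_{L_2}+\|D_x u\|_{L_2}$; it then upgrades to $L_\infty$ via Sobolev embedding; and only at the end does it convert the right-hand side into the oscillation $\|D_v^2 u-(D_v^2 u)_{Q_1}\|_{L_2}$, by subtracting from $u$ the polynomial corrections $v\cdot(D_v u)_{Q_1}$ (combined with Poincar\'e) and $\tfrac12 v^T(D_v^2 u)_{Q_1}v$, both of which are annihilated or absorbed into the data by $P_0$. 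Your proof instead splits immediately according to whether an $x$-derivative is present. For $l\ge 1$ you apply Lemma \ref{lemma 3.7} directly to the $P_0$-harmonic function $D_{x_p}u$. For $l=0$ you bypass the polynomial-subtraction machinery by working directly with the oscillation $\tilde w=D_{v_kv_l}u-(D_{v_kv_l}u)_{Q_1}$, which solves $P_0\tilde w=D_{v_l}D_{x_k}u+D_{v_k}D_{x_l}u$, and you construct the explicit time-dependent corrector $\psi^{kl}=t(D_{v_l}D_{x_k}u+D_{v_k}D_{x_l}u)-t^2 D_{x_kx_l}u$, whose algebra (via $P_0(th)=h+tP_0 h$) makes $v^{kl}=\tilde w-\psi^{kl}$ $P_0$-harmonic, so that Lemma \ref{lemma 3.7} applies again. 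I verified the commutator identities and the cancellation of the $tD_{x_kx_l}u$ terms; the algebra is sound, and the corrector $\psi^{kl}$ is built only from quantities already controlled on $Q_{3/4}$ by your Step 1. This corrector-based route is structurally cleaner in that it treats the $D_v^2 u$ oscillation on its own terms rather than pushing it back onto $u$, and it removes the need for a separate Poincar\'e step. What the paper's route buys is the intermediate $L_2$ derivative bounds, which are slightly more informative and reused later. Both proofs rely on the same preliminary reduction (mollification in $x$ or finite-difference quotients plus an interior $S_2$ bootstrap to make all the auxiliary functions genuinely $S_{2,\text{loc}}$), which you correctly flag as the bookkeeping-heavy part. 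The only place I would tighten wording is the claim that Lemma \ref{lemma 3.7} ``rescaled via Lemma \ref{lemma 2.0} \ldots holds between any nested cylinders'': since $3/4>1/2$, passing from the interior radius $1/2$ of Lemma \ref{lemma 3.7} to $3/4$ requires a finite covering by translated cylinders $Q_\rho(z_i)\subset(-1,0)\times\bR^d\times B_1$ in addition to the kinetic dilation, and it is worth noting that the kinetic scaling in Lemma \ref{lemma 2.0} keeps the $x$-domain equal to $\bR^d$, which is needed for the hypotheses of Lemma \ref{lemma 3.7} to persist; but this is standard and does not affect the argument.
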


\begin{proof}
\textbf{Step 1: $L_2$ estimate of derivatives.}
Here we  will show that for  $j \in \{0, 1\}$ and $l+m \ge 1$, and $1/2 \le r <  R \le 1$,
\begin{equation}
            \label{eq2.5.8}
   \|\partial_t^j D^l_x D_v^{m}   u\|_{L_2 (Q_{r}) } \le N  \delta^{-\theta} (\|D_v u\|_{L_2 (Q_{R}) } +  \|D_x u\|_{L_2 (Q_{R}) }).
\end{equation}
To do that, we  follow the argument of Lemma 5.6 in \cite{DY_21a}. By mollifying $u$ in  the $x$ variable, we may assume that $u$ is smooth as a function of $x$.

\textit{Case 1: $j = 0 =l, m \ge 1$.} We will show  that  for any $m \ge 1$,
\begin{equation}
                \label{eq2.5.1}
    \|D^{m}_v  u\|_{L_2 (Q_{r}) }  \le
     N  \delta^{-\theta}  (\|D_x u\|_{L_2 (Q_{R}) }  +  \|D_v u\|_{L_2 (Q_{R}) }),
\end{equation}
where $N = N (d,  r, R)$. We prove this inequality by induction.
Obviously, the  estimate  holds for $m  = 1$.
Furthermore, for any multi-index $\alpha$ of order $m \ge 1$, one has
\begin{equation}
                \label{eq2.5.6}
     P_0 (D_{v}^{\alpha}  u)  =  \sum_{  \widetilde \alpha:  \, \widetilde \alpha < \alpha, |\widetilde \alpha| = m - 1 } c_{\widetilde \alpha}
	  D^{\widetilde \alpha}_v D^{\alpha - \widetilde \alpha}_{x} u.
\end{equation}
By the interior $S_2$ estimate in Lemma \ref{lemma A.3}, for $r < r_1 < R$,
\begin{equation}
                \label{eq2.5.2}
    \|D^{m+1}_v u\|_{L_2 (Q_{r}) } \le
       N  \delta^{-2}  (\|D^{m}_v  u\|_{L_2 (Q_{r_1}) }
   +  \|D^{m-1}_v D_x  u\|_{L_2 (Q_{r_1}) }).
\end{equation}
Note that the first term on the right-hand side of \eqref{eq2.5.2} is bounded by the right-hand side in the equality \eqref{eq2.5.1} by the induction hypothesis.
To handle the second term, note that for any nonempty multi-index $\beta$,
\begin{equation}
            \label{eq2.5.3}
    P_0 (D_x^{\beta} u) = 0 \quad  \text{in} \, \, (-1, 0) \times \bR^d \times B_1.
\end{equation}
Then, by Lemma \ref{lemma 3.7},  for some $r_1 < r_2 < 1$,
\begin{equation}
            \label{eq2.5.6.1}
    \|D^{m-1}_v D_x  u\|_{L_2 (Q_{r_1}) } \le N  \delta^{-\theta} \|D_x u\|_{L_2 (Q_{r_2}) }.
\end{equation}
Thus, the inequality \eqref{eq2.5.1} is valid. To make this  argument rigorous, one  can use the method of finite difference quotients.

\textit{Case $j = 0, l \ge 1, m \ge 0$. } Arguing as in \eqref{eq2.5.6.1} and using \eqref{eq2.5.3} and Lemma \ref{lemma 3.7}, we get
\begin{equation}
                \label{eq2.5.4}
        \|D^{m}_v D_x^l u\|_{  L_2 (Q_r) } \le
   N  \delta^{-\theta} \| D_x  u\|_{  L_2 (Q_R) }.
\end{equation}

\textit{Case 3: $j = 1$, $l + m \ge  1$.}
Note that the function $U = D_x^{\beta} D_{v}^{\alpha} u$, where $|\alpha|=m$ and $|\beta|=l$, satisfies the identity (see \eqref{eq2.5.6})
\begin{equation}
                \label{eq2.5.7}
    \begin{aligned}
   & \partial_t U = v \cdot D_x U + a^{i j} D_{v_i v_j} U \\
   &\quad+   1_{m \ge 1} \sum_{  \widetilde \alpha:  \, \widetilde \alpha < \alpha, |\widetilde \alpha| = m - 1 } c_{\widetilde \alpha}
	  D^{\widetilde \alpha}_v D^{\alpha - \widetilde \alpha + \beta}_{x} u  \quad  \text{in} \, \, (-1, 0) \times \bR^d \times B_1.
	  \end{aligned}
\end{equation}
 The above formula combined with  \eqref{eq2.5.1} and \eqref{eq2.5.4} yields
\begin{equation*}
    \|\partial_t D_v^m D_x^l u\|_{  L_2 (Q_r) } \le  N  \delta^{-\theta} (\|D_v  u\|_{  L_2 (Q_R) }
   +  \| D_x  u\|_{  L_2 (Q_R) }).
\end{equation*}
Thus, \eqref{eq2.5.8} holds.

\textbf{Step 2: $L_{\infty}$ estimate of derivatives.}
By \eqref{eq2.5.8} and the Sobolev embedding theorem, for any $l,m\ge 0$ such that $l+m\ge 1$,
\begin{equation}
        \label{eq2.5.10}
\|D^l_x D_v^{m}   u\|_{L_{\infty} (Q_{r}) } \le N  \delta^{-\theta} (\|D_v u\|_{L_2 (Q_{R}) } +  \|D_x u\|_{L_2 (Q_{R}) }).
\end{equation}
To estimate $\partial_t D_x^l D^m_v u$, we use \eqref{eq2.5.7} and \eqref{eq2.5.10}:
\begin{equation}
    \label{eq2.5.9}
\begin{aligned}
&\|\partial_t^{j}  D^l_x D_v^{m}   u\|_{L_{\infty} (Q_{r}) } & \\
&\le N  \delta^{-\theta} (\|D_v u\|_{L_2 (Q_{R}) } +  \|D_x u\|_{L_2 (Q_{R}) }),  \, \, j \in \{0, 1\}, l + m \ge 1.
\end{aligned}
\end{equation}

\textbf{Step 3: proof of \eqref{eq3.6.0}}.
Observe that
$$
    P_0 (u - v \cdot (D_v u)_{Q_1})  = \chi \quad \text{in} \, \, (-1, 0) \times \bR^d \times B_1.
$$
Then, by \eqref{eq2.5.9} and the Poincar\'e  inequality,
\begin{equation}
                                    \label{eq7.56}
    \|\partial_t^j D_x^l D_v^m  u\|_{ L_{\infty} (Q_{1/2})  } \le  N  \delta^{-\theta} (\|D^2_v  u\|_{ L_2 (Q_R) }
   +  \| D_x  u\|_{ L_2 (Q_R) }),
\end{equation}
where $j \in \{0, 1\}$ and either $m\ge 2$ or $l\ge 1$.
Finally, we denote
$$
    U_{1} = u - (1/2)v^T (D^2_v u)_{Q_1} v
$$
and observe that
\begin{align*}
      &  D^2_v  U_1  = D^2_v  u  -  ( D^2_v  u)_{Q_1}, \, \, \partial_t^j D_x^l D_v^{m+2}  U_1 = \partial_t^j D_x^l D^{m+2}_v u, \quad j+m + l\ge 1, \\
       &  P_0 U_{ 1 } (z) = \chi (t) +  a^{ i j} (t) (D_{v_i v_j} u)_{Q_1},
      \, \, \quad z \in (-1, 0) \times \bR^d \times B_1.
\end{align*}
By the above identities, the desired estimate \eqref{eq3.6.0} follows from
 \eqref{eq7.56} with $U_1$ in place of $u$.
\end{proof}

\begin{lemma}
            \label{lemma 3.8}
Invoke the assumptions of Lemma \ref{lemma 3.6} and assume, additionally, that
$u (z)  = \mathsf{u}_1 (z) + \mathsf{u}_2 (t, v)$, where
\begin{itemize}
\item[--] $\mathsf{u}_1 \in S_2 ((-4, 0) \times \bR^{2d})$ satisfies $P_0 \mathsf{u}_1 = 0$ in $(-1, 0) \times \bR^d \times B_1$,
\item[--] $\mathsf{u}_2, \partial_t \mathsf{u}_2, D^2_v \mathsf{u}_2 \in L_{2, \text{loc} }((-4, 0) \times \bR^{2d})$, and $\mathsf{u}_2$ satisfies
$$
\partial_t \mathsf{u}_2 - a^{i j} (t) D_{v_i v_j}\mathsf{u}_2  = \chi (t)\quad \text{in}\, (-1, 0) \times B_1.
$$
\end{itemize}
Then, for any $j \in \{0, 1\}$ and $l, m \ge 0$ such that $j+l+m \ge 1$, there exists $\theta = \theta (d, j, l, m) > 0$ such that
\begin{equation}
            \label{eq3.8.0}
    \begin{aligned}
   \, &  \|\partial_t^j D^l_x D_v^{m+2}   u\|_{ L_{\infty} (Q_{1/2}) }
    \\
  &  \leq N   \delta^{-\theta}  \|D_v^2 u   - (D_v^2 u)_{Q_1} \|_{L_2 (Q_1)}\\
  & \quad + N \delta^{-\theta} \sum_{k = 0}^{\infty}
	  2^{-k}	  (|(-\Delta_x)^{1/3} u - ((-\Delta_x)^{1/3} u)_{  Q_{1, 2^k}  }|^2)_{ Q_{1, 2^k}}^{ 1/2   },
\end{aligned}
\end{equation}
where $N = N (d, j,  l, m)$.
\end{lemma}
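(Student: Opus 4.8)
The plan is to combine Lemma \ref{lemma 3.6} with Lemma \ref{lemma 3.5} $(ii)$, exploiting crucially that the summand $\mathsf{u}_2$ in the decomposition $u=\mathsf{u}_1+\mathsf{u}_2$ does not depend on $x$. First I would apply Lemma \ref{lemma 3.6} --- more precisely, the version of \eqref{eq3.6.0} in which the cylinder $Q_1$ on the right-hand side is replaced by $Q_{3/4}$, which follows verbatim from the same proof since every step there is an interior $S_2$/Sobolev/Poincar\'e estimate on nested cylinders --- to obtain
\[
\|\partial_t^j D^l_x D_v^{m+2} u\|_{L_\infty(Q_{1/2})} \le N\delta^{-\theta}\big(\|D^2_v u-(D^2_v u)_{Q_1}\|_{L_2(Q_1)}+\|D_x u\|_{L_2(Q_{3/4})}\big),
\]
where I use that the average minimizes the $L_2$-distance, so that $\|D^2_v u-(D^2_v u)_{Q_{3/4}}\|_{L_2(Q_{3/4})}\le\|D^2_v u-(D^2_v u)_{Q_1}\|_{L_2(Q_1)}$. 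It then remains to dominate $\|D_x u\|_{L_2(Q_{3/4})}$ by the series in \eqref{eq3.8.0}.

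This is where the structure of the splitting enters. Since $\mathsf{u}_2=\mathsf{u}_2(t,v)$ is independent of $x$, one has $D_x u=D_x\mathsf{u}_1$; and by the pointwise formula \eqref{1.9} (valid since $1/3\in(0,1/2)$) applied to the $x$-constant function $\mathsf{u}_2$, $(-\Delta_x)^{1/3}\mathsf{u}_2\equiv0$, so that $(-\Delta_x)^{1/3}u=(-\Delta_x)^{1/3}\mathsf{u}_1$ on $(-1,0)\times\bR^d\times B_1$ (in particular, $(-\Delta_x)^{1/3}u$ is understood as the well-defined $S_{2,\text{loc}}$ function furnished by Lemma \ref{lemma 3.5} $(i)$ applied to $\mathsf{u}_1$). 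Now $\mathsf{u}_1\in S_2((-4,0)\times\bR^{2d})$ and $P_0\mathsf{u}_1=0$ in $(-1,0)\times\bR^d\times B_1$, so I can invoke Lemma \ref{lemma 3.5} $(ii)$ with $r=3/4$ applied to $\mathsf{u}_1$; combined with the two identities just noted, this gives
\[
\|D_x u\|_{L_2(Q_{3/4})}\le N\delta^{-4}\sum_{k=0}^\infty 2^{-k}\big(|(-\Delta_x)^{1/3}u-((-\Delta_x)^{1/3}u)_{Q_{1,2^{k+2}}}|^2\big)^{1/2}_{Q_{1,2^{k+2}}}.
\]
Reindexing $k\mapsto k+2$ (which costs only a fixed factor, the series being geometric) bounds the right-hand side by a multiple of $\sum_{k=0}^\infty 2^{-k}(|(-\Delta_x)^{1/3}u-((-\Delta_x)^{1/3}u)_{Q_{1,2^k}}|^2)^{1/2}_{Q_{1,2^k}}$, and plugging this into the first display yields \eqref{eq3.8.0} with $N=N(d,j,l,m)$ and $\theta=\theta(d,j,l,m)$.

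I do not expect a genuine obstacle: the argument is essentially a bookkeeping combination of Lemmas \ref{lemma 3.6} and \ref{lemma 3.5}, with the only substantive input being that $(-\Delta_x)^{1/3}$ annihilates functions independent of $x$. The one point requiring a little care is the radius mismatch --- Lemma \ref{lemma 3.6} is phrased with $Q_1$ on the right, while Lemma \ref{lemma 3.5} $(ii)$ controls $D_x u$ only on $Q_r$ for $r<1$ --- which is why I pass to an intermediate radius such as $3/4$ and rely on the interior character of \eqref{eq3.6.0}. One should also note that the summability condition underlying the proof of Lemma \ref{lemma 3.5} is met automatically because $\mathsf{u}_1\in S_2((-4,0)\times\bR^{2d})$.
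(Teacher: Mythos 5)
Your proof is correct and takes essentially the same route as the paper's: apply Lemma \ref{lemma 3.6} to control the higher derivatives by $\|D^2_v u - (D^2_v u)_{Q_1}\|_{L_2}$ and $\|D_x u\|_{L_2}$, then feed the latter into Lemma \ref{lemma 3.5} $(ii)$ applied to $\mathsf{u}_1$, using that $\mathsf{u}_2$ is $x$-independent so $D_x u = D_x\mathsf{u}_1$ and $(-\Delta_x)^{1/3}u = (-\Delta_x)^{1/3}\mathsf{u}_1$. The paper's proof is only three sentences long and silently glosses over the radius mismatch you flag (Lemma \ref{lemma 3.5} $(ii)$ gives $\|D_x u\|_{L_2(Q_r)}$ only for $r<1$, while \eqref{eq3.6.0} is stated with $Q_1$), whereas you handle it explicitly by passing to the intermediate radius $3/4$ and noting that Lemma \ref{lemma 3.6}'s proof is an interior argument valid on nested cylinders; this is exactly what the paper's formula \eqref{eq7.56} (stated with a free $R\in(1/2,1]$) licenses. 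You also make explicit the harmless reindexing $Q_{1,2^{k+2}}\mapsto Q_{1,2^k}$ in the geometric series, which the paper again suppresses. In short: same proof, with the bookkeeping that the paper leaves implicit spelled out.
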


\begin{proof}
By Lemma \ref{lemma 3.6}, \eqref{eq3.6.0} holds. Furthermore,  by Lemma \ref{lemma 3.5}, \eqref{eq3.5.0} is valid with $u$ replaced with $\mathsf{u}_1$.
Note that since $\mathsf{u}_2$ is independent of $x$, \eqref{eq3.5.0} is also true  for $u$. The lemma is proved.
\end{proof}

\begin{proof}[Proof of Proposition \ref{proposition 3.3}]
We may assume that the series involving $P_0 u$  in \eqref{eq3.3.0} converges.
Denote $f = P_0 u$.
We split $u$ into the  `caloric' part and a remainder and estimate each of the terms (see Section \ref{section 1.3}). After that, we prove the desired bounds of $I_1$ and $I_2$.

    \textit{`Remainder' term.}
Let $\phi = \phi (t, v) \in C^{\infty}_0 ((t_0 - ( 2\nu r)^2, t_0  + (2\nu r)^2) \times B_{2\nu r} (v_0))$ be a function
such that $\phi = 1$ on
$
    (t_0 - (\nu r)^2, t_0) \times B_{\nu r} (v_0),
$
\begin{itemize}
    \item[--] $u_1$   be the unique $S_2 ((t_0 - (2\nu r)^2) \times \bR^{2d})$ solution to the Cauchy problem
\begin{equation}
    \label{eq3.3.4}
    P_0 u_1 (z)= f (z) \phi (t, v), \quad u (t_0 - (2\nu r)^2, \cdot) = 0
\end{equation}
(see Definition \ref{definition A.5} and Theorem \ref{theorem A.1} $(iii)$),
\item[--]   $u_2 = u_2 (t, v)$ be the unique solution in the usual parabolic Sobolev space $W^{1, 2}_2 ((t_0 - (2\nu r)^2, t_0) \times \bR^d)$  to the initial-value problem
\begin{equation}
    \label{eq3.3.5}
    \partial_t u_2 (t, v)
    - a^{i j} (t) D_{v_i v_j} u_2  (t, v) = - \chi (t) \phi (t, v), \, \,  u_2 (t_0 - (2\nu r)^2, \cdot) \equiv 0
\end{equation}
\end{itemize}
(see, for example, Theorem 2.5.2 in \cite{Kr_08}).
We set
$$
    u_{\text{rem}} (z) = u_1 (z) + u_2 (t, v).
$$

  Next, we use a scaling argument. By $\widetilde u_{\text{rem}}, \widetilde f, \widetilde \phi$, and $\widetilde P_0$ we denote the functions
    and the operator defined by \eqref{eq2.0.0} and \eqref{eq2.0.1}, respectively, with $2\nu r$ in place of $r$.
Then, by  Lemma \ref{lemma 2.0}, $\widetilde u_{\text{rem}} \in \slocxv ((-1, 0) \times \bR^{2d})$ (see Definition \ref{definition 3.0}) solves the Cauchy problem
$$
   \widetilde P_0 \widetilde  u_{\text{rem}} (z) = (2\nu r)^2 \big(\widetilde f (z) - \widetilde \chi (t) \big) \widetilde \phi (t, v), \quad \widetilde u_{\text{rem}} (-1, \cdot) \equiv 0.
$$
Furthermore, by Lemma \ref{lemma 3.2}, there exists some $\theta = \theta (d) > 0$ such that for any $R \ge 1$,
\begin{align}
\label{eq3.3.7}
(|D_v^2 \widetilde  u_{\text{rem}}|^2)_{Q_{ 1, R}}^{1/2}
&\le N (2\nu r)^2 \delta^{-\theta} \sum_{k=0}^\infty 2^{- k^2/8 } (|\widetilde  f - \widetilde \chi|^2)^{1/2}_{Q_{1, (2^{k+1}/\delta^2) R } },\\
\label{eq3.3.8}
 (|(-\Delta_x)^{1/3} \widetilde u_{\text{rem}}|^2)^{1/2}_{Q_{ 1, R } }
&\le N (2\nu r)^2 \delta^{-\theta} \sum_{k=0}^\infty 2^{-2k}(|\widetilde f - \widetilde \chi|^2)^{1/2}_{Q_{1, (2^{k +1 }/\delta^2) R }}.
\end{align}
Next, note that  for any $\varkappa, c > 0$ and $A = (-\Delta_x)^{1/3}$ or $D^2_v u$,
\begin{align*}
& (|A u_{\text{rem}}|^2)^{1/2}_{Q_{ \varkappa, c \varkappa} (z_0)}
	 = (2\nu r)^{-2}	  (|A \widetilde u_{\text{rem}}|^2)^{1/2}_{Q_{\varkappa/(2\nu r), c \varkappa/(2\nu r)}}.
 \end{align*}	
Combining \eqref{eq3.3.7} - \eqref{eq3.3.8} with the above identity, we obtain for any $R \ge 1$,
 \begin{align}
 \label{eq3.3.2}
&(|D_v^2  u_{\text{rem}}|^2)_{Q_{ 2\nu r, (2\nu r) R} (z_0)}^{1/2} \le N  \delta^{-\theta} \sum_{k=0}^\infty 2^{- k^2/8 }  F_k (R), \\
    \label{eq3.3.3}
   & (|(-\Delta_x)^{1/3} u_{\text{rem}}|^2)^{1/2}_{Q_{ 2\nu r,  (2\nu r) R} (z_0)}  \le N  \delta^{-\theta} \sum_{k=0}^\infty 2^{-2k} F_k (R),
   \end{align}
where
$$
    F_k (R) = (|f - \chi|^2)^{1/2}_{Q_{2\nu r, (2^{k +1 }/\delta^2) R (2\nu r) } (z_0)}.
$$

\textit{`Caloric' term.}
Denote $u_c = u - u_{\text{rem}} \in S_{2,\text{loc}} ((-4, 0) \times \bR^{2d})$.
Let $\overline{P}_0$ be   the operator given by \eqref{eq2.0.1} with $\nu r$ in place of $r$. For a function $h$ on $\bR^{1+2d}$,
by $\overline{h}$ we denote the function defined by \eqref{eq2.0.0} with $\nu r$ in place of $r$. Then, by Lemma \ref{lemma 2.0},
\begin{equation}
                \label{eq3.3.6}
    \overline{P}_0 \overline{u}_c (z) = (\nu r)^2 \overline{\chi} (t) \quad \text{in} \, \, (-1, 0) \times \bR^d \times B_1.
\end{equation}
Note that
\begin{itemize}
\item[--]
$\overline{u}_c (z) = \mathsf{u}_1 (z) + \mathsf{u}_2 (t, v)$, where $\mathsf{u}_1  =  \overline{u} -  \overline{u}_1$, $\mathsf{u}_2  = -  \overline{u}_2$,
 and $u_1$ and $u_2$ are defined by \eqref{eq3.3.4} and \eqref{eq3.3.5}, respectively;
\item[--]
 the conditions of  Lemma \ref{lemma 3.8} are satisfied due to \eqref{eq3.3.6} and the facts that $\mathsf{u}_1 \in S_2 ((-4, 0) \times \bR^{2d})$, and $\mathsf{u}_2 \in W^{1, 2}_2 ((-4, 0) \times \bR^d)$.
 \end{itemize}
Then, by this lemma, the bound \eqref{eq3.8.0} holds with $u$ replaced with $\overline{u}_c$.
Consequently, for any $\nu \ge 2$, we have
\begin{align}
\label{eq3.37}
  & (|D^2_v \overline{ u}_c -  (D_v^2 \overline{ u}_c)_{ Q_{1/\nu}  }|^2)^{1/2}_{ _{ Q_{1/\nu}  } }\leq  \sup_{ z_1, z_2 \in Q_{1/\nu} } |D^2_v \overline{ u}_c (z_1)  - D^2_v \overline{ u}_c (z_2)|\notag\\
  &  \leq N   \nu^{-1} \delta^{-\theta}  (|D_v^2 \overline{ u}_c   - (D_v^2 \overline{ u}_c)_{Q_1}|^2)^{1/2}_{Q_1}\\
  &\quad  + N  \nu^{-1}\delta^{-\theta} \sum_{k = 0}^{\infty}
	  2^{-k} 	  (|(-\Delta_x)^{1/3} \overline{ u}_c - ((-\Delta_x)^{1/3} \overline{ u}_c)_{  Q_{1, 2^k}  }|^2)_{ Q_{1, 2^k}}^{ 1/2   }\notag.
\end{align}
Furthermore, by \eqref{eq3.3.6} and Lemma \ref{lemma 3.5} $(i)$, we have $(-\Delta_x)^{1/3} \overline{u}_c \in S_{2, \text{loc}} ((-1, 0) \times \bR^{d} \times B_1)$, and the identity
$$
     \overline{ P}_0   (-\Delta_x)^{1/3} \overline{ u} = 0 \quad \text{in} \, \, (-1, 0) \times \bR^d \times B_1
$$
is valid.
Hence, by Lemma \ref{lemma 3.7},
\begin{align}
\label{eq3.38}
    &(| (-\Delta_x)^{1/3} \overline{ u}_c -  ((-\Delta_x)^{1/3} \overline{ u}_c)_{ Q_{1/\nu}  }|^2)^{1/2}_{ _{ Q_{1/\nu}  } }\notag\\
   & \le \sup_{ z_1, z_2 \in Q_{1/\nu} } |(-\Delta_x)^{1/3} \overline{ u}_c (z_1)  - (-\Delta_x)^{1/3}\overline{ u}_c (z_2)|\\
   & \le N  \nu^{-1} \delta^{-\theta}  (|(-\Delta_x)^{1/3} \overline{ u}_c   - ((-\Delta_x)^{1/3} \overline{u}_c)_{Q_1}|^2)^{1/2}_{Q_1}\notag.
\end{align}
 Combining \eqref{eq3.37} - \eqref{eq3.38} with the identity
 \begin{align*}
 &(|A u_c -  (A u_c)_{ Q_{ \varkappa, c \varkappa} (z_0)}|^2)^{1/2}\\
&	= (\nu r)^{-2}
\big(|A \overline{ u}_{c} - (A \overline{ u}_{c})_{ Q_{\varkappa/(2\nu r), c \varkappa/(2\nu r)} }|^2\big)^{1/2}_{ Q_{\varkappa/(2\nu r), c \varkappa/(2\nu r)} }, \quad A = (-\Delta_x)^{1/3}, D^2_v,
\end{align*}
 we obtain
\begin{align}
\label{eq3.3.12}
 & (|D^2_v  u_c -  (D_v^2 u_c)_{ Q_r (z_0) }|^2)^{1/2}_{ Q_r (z_0) }\\
&\le	N \nu^{-1} \delta^{-\theta}   (|D_v^2  u_c - (D_v^2 u_c)_{ Q_{\nu r} (z_0) }|^2)^{1/2}_{ Q_{\nu r} (z_0) }\notag\\
	& 	\,\, + N \nu^{-1} \delta^{-\theta} \sum_{k=0}^\infty 2^{-k} (|(-\Delta_x)^{1/3} u_c - ((-\Delta_x)^{1/3} u_c)_{Q_{\nu r,2^{k} \nu r } (z_0)}|^2)^{1/2}_{Q_{\nu r,2^{k} \nu r } (z_0)},\notag\\
\label{eq3.3.13}
 & (|(-\Delta_x)^{1/3}  u_c -  ((-\Delta_x)^{1/3} u_c)_{ Q_{ r} (z_0) }|^2)^{1/2}_{ Q_{r} (z_0) }\\
&\le
	N \nu^{-1} \delta^{-\theta}   (|(-\Delta_x)^{1/3}  u_c - ((-\Delta_x)^{1/3} u_c)_{ Q_{\nu r} (z_0) }|^2)^{1/2}_{ Q_{\nu r} (z_0) }\notag.
	\end{align}

\textbf{Estimate of $I_1$.}
First, note that by \eqref{eq3.3.3} with $R = 1$,
\begin{align*}
      (|(-\Delta_x)^{1/3} u_{\text{rem}}|^2)^{1/2}_{Q_r (z_0)}
&\le N  \nu^{1+2d} (|(-\Delta_x)^{1/3} u_{\text{rem}}|^2)^{1/2}_{Q_{ 2  \nu r} (z_0)}\\
&
\le N \nu^{1+2d}  \delta^{-\theta} \sum_{k=0}^\infty 2^{-2k} F_k (1)\notag.\, \,
\end{align*}
 This combined with  \eqref{eq3.3.13} and the triangle inequality give the desired estimate:
\begin{align*}
&     \big(|(-\Delta_x)^{1/3} u -  ((-\Delta_x)^{1/3} u)_{ Q_r (z_0) }|^2\big)^{1/2}_{ Q_r (z_0) }\\
&     \leq N \nu^{-1} \delta^{-\theta} (|(-\Delta_x)^{1/3}  u -  ((-\Delta_x)^{1/3} u)_{ Q_{\nu r} (z_0) }|^2)^{1/2}_{ Q_{\nu r} (z_0) }\\
 &\quad    + N  \nu^{1+2d}\delta^{-\theta}  (|(-\Delta_x)^{1/3} u_{\text{rem}}|^2)^{1/2}_{ Q_{2\nu r} (z_0) }\\
&    \leq N \delta^{-\theta}    (|(-\Delta_x)^{1/3}  u -  ((-\Delta_x)^{1/3} u)_{ Q_{\nu r} (z_0) }|^2)^{1/2}_{ Q_{\nu r} (z_0) }
+ N \nu^{1+2d} \delta^{-\theta} \sum_{k=0}^\infty 2^{-2k} F_k (1).
\end{align*}

\textbf{Estimate of $I_2$.}
By \eqref{eq3.3.2} with $R = 1$,
$$
(|D_v^2  u_{\text{rem}}|^2)_{Q_{ r} (z_0)}^{1/2}
\le N    \delta^{-\theta} \nu^{ 1+2d }\sum_{k=0}^\infty 2^{- k^2/8 } F_k (R),
$$
and hence, by the triangle inequality, we only need to estimate $I_2$ with $u$ replaced with $u_{c}$.

Next, by using  \eqref{eq3.3.12}, we get
\begin{equation}
             \label{eq3.3.14}
\begin{aligned}
 & (|D^2_v  u_c -  (D_v^2 u_c)_{ Q_r (z_0) }|^2)^{1/2}_{ Q_r (z_0) }\\
&\le
	N \nu^{-1} \delta^{-\theta}   (|D_v^2  u - (D_v^2 u)_{ Q_{ \nu r} (z_0) }|^2)^{1/2}_{ Q_{\nu r} (z_0) } \\
&
	\quad + N \nu^{-1} \delta^{-\theta} \sum_{k=0}^\infty 2^{-k} (|(-\Delta_x)^{1/3}u - ((-\Delta_x)^{1/3}u)_{Q_{\nu r,2^{k} \nu r } (z_0)}|^2)^{1/2}_{Q_{\nu r,2^{k} \nu r } (z_0)}\\
&\quad	+ N  \nu^{-1} \delta^{-\theta} (J_1+J_2),\\
 \end{aligned}
 \end{equation}
 where
 $$
    J_1 = (|D_v^2  u_{\text{rem}}|^2)^{1/2}_{ Q_{\nu r} (z_0) },
    \quad J_2 = \sum_{k=0}^\infty 2^{-k} (|(-\Delta_x)^{1/3} u_{\text{rem}}|^2)^{1/2}_{Q_{\nu r,2^{k} \nu r } (z_0)}.
 $$
 The term $J_1$ is estimated in \eqref{eq3.3.2} with $R = 1$. Furthermore, using \eqref{eq3.3.3} with $R = 2^k$ gives
\begin{align*}
&J_2 \le   N (d)  \sum_{l  =0}^\infty 2^{  -2 l  }   \sum_{k   = 0}^\infty 2^{-k} F_l (2^k).
\end{align*}
Noticing that $F_l (2^k) = F_{l+k} (1)$ and changing the index of summation $k \to k+l$, we obtain
\begin{equation}
               \label{eq3.3.15}
        J_2 \le N (d) \sum_{k   = 0}^\infty 2^{-k} F_k (1).
\end{equation}
Combining the  inequalities \eqref{eq3.3.14} - \eqref{eq3.3.15}, \eqref{eq3.3.2}, we prove the estimate of $I_{2}$ with $u$ replaced with $u_c$. As was mentioned above, this implies the desired bound of $I_2$.
\end{proof}

\section{Proof of Theorem \ref{theorem 1.5}}
                                \label{section 4}
In this section, we first show a few intermediate results and then prove Theorem \ref{theorem 1.5}.
\begin{lemma}
                \label{lemma 4.1}
For any $\alpha \in (0, 1)$ and $u \in \mathbb{C}^{\alpha} (\bR^{1+2d}_T) \cap S_2 (\bR^{1+2d}_T)$ (see \eqref{1.7} and \eqref{eq1.11}),
we have
\begin{equation}
            \label{eq4.1.0}
\begin{aligned}
 & [D^2_v u] +  [(-\Delta_x)^{1/3} u] \\
&\le
  N  \delta^{-\theta} \big([P u]_{ L_{\infty} C^{\alpha/3, \alpha}_{x, v} (\bR^{1+2d}_T)} + \|u\|_{ L_{\infty} (\bR^{1+2d}_T) }\big),
\end{aligned}
\end{equation}
where $N = N (d, \alpha, K)>0$ and $\theta = \theta (d, \alpha) > 0$.
\end{lemma}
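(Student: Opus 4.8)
The plan is to perturb the model mean-oscillation estimate of Proposition \ref{proposition 3.3} by freezing the leading coefficients along the characteristic through the center of each cylinder, and then run Campanato's iteration via Lemma \ref{lemma 2.1}. Fix $z_0=(t_0,x_0,v_0)\in\overline{\bR^{1+2d}_T}$ and $r>0$, and let $\gamma_{z_0}(t):=(t,\,x_0-(t-t_0)v_0,\,v_0)$ be the characteristic curve through $z_0$. Set $\bar a^{ij}(t):=a^{ij}(\gamma_{z_0}(t))$, $P_0:=\partial_t-v\cdot D_x-\bar a^{ij}(t)D_{v_iv_j}$, and $f:=P_0u=Pu+(a^{ij}-\bar a^{ij})D_{v_iv_j}u$. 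By Assumption \ref{assumption 1.1}, $\bar a$ is measurable in $t$, bounded by $\delta^{-1}$ and uniformly elliptic with constant $\delta$, and $f\in L_2(\bR^{1+2d}_T)$ since $u\in S_2(\bR^{1+2d}_T)$. Choosing $\chi(t):=f(\gamma_{z_0}(t))$ and observing that $a^{ij}-\bar a^{ij}$ vanishes on $\gamma_{z_0}$, we get $\chi(t)=(Pu)(\gamma_{z_0}(t))$. We then apply Proposition \ref{proposition 3.3} with this $P_0$, this $\chi$, and a parameter $\nu\ge2$ to be chosen at the very end as a power of $\delta^{-1}$.

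\textbf{Perturbation estimate.} The crux is to bound $F_k:=\big(|f-\chi|^2\big)^{1/2}_{Q_{2\nu r,\,(2^{k+1}/\delta^2)(2\nu r)}(z_0)}$, which is the genuinely new term on the right of \eqref{eq3.3.0}. For $z=(t,x,v)$ in that (long) cylinder,
\[
 f(z)-\chi(t)=\big[(Pu)(z)-(Pu)(\gamma_{z_0}(t))\big]+\big(a^{ij}(z)-\bar a^{ij}(t)\big)D_{v_iv_j}u(z),
\]
and in both brackets the two arguments lie on the \emph{same time slice} $t$; hence the merely measurable $t$-dependence of $a$ and $Pu$ plays no role, and by the definition of $L_\infty C^{\alpha/3,\alpha}_{x,v}$ and Assumptions \ref{assumption 1.1}--\ref{assumption 1.2} one gets, for such $z$,
\[
 |f(z)-\chi(t)|\le N\,2^{k\alpha}\delta^{-2\alpha}(\nu r)^{\alpha}\Big([Pu]_{L_\infty C^{\alpha/3,\alpha}_{x,v}(\bR^{1+2d}_T)}+K\delta^{-1}\|D^2_vu\|_{L_\infty(\bR^{1+2d}_T)}\Big),
\]
since on that cylinder $|x-x_0+(t-t_0)v_0|^{1/3}<(2^{k+1}/\delta^2)(2\nu r)$ and $|v-v_0|<2\nu r$, so the relevant $(x,v)$-scale is $\le N2^k\delta^{-2}(\nu r)$. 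Summing the geometric series yields $\sum_k2^{-2k}F_k$ and $\sum_k2^{-k}F_k$ both $\le N\delta^{-2\alpha}(\nu r)^{\alpha}(\cdots)$, the second sum converging precisely because $\alpha<1$. The only other non-``caloric'' terms in \eqref{eq3.3.0}, the long-cylinder oscillations $\sum_k2^{-k}(|(-\Delta_x)^{1/3}u-(\cdot)_{Q_{\nu r,2^k\nu r}(z_0)}|^2)^{1/2}$, are controlled by $\sum_k2^{-k}[(-\Delta_x)^{1/3}u]_{\kC^{\alpha}(\bR^{1+2d}_T)}(N2^k\nu r)^{\alpha}=N(\nu r)^{\alpha}[(-\Delta_x)^{1/3}u]_{\kC^{\alpha}(\bR^{1+2d}_T)}$, using that the $\rho$-diameter of $Q_{\nu r,2^k\nu r}(z_0)$ is $\le N2^k\nu r$ and again $\alpha<1$.

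\textbf{Iteration and conclusion.} Plug these bounds into \eqref{eq3.3.0}, divide by $r^{\alpha}$, and take the supremum over $r\ge r_0$ (arbitrary fixed $r_0>0$) and over $z_0$. Denoting by $M_i^{r_0}$ the resulting truncated $\mathcal{L}^{2,\alpha}_{\text{kin}}$-quantities of $(-\Delta_x)^{1/3}u$ (for $i=1$) and $D^2_vu$ (for $i=2$), which are finite for every $r_0>0$ because $u\in S_2$ and Theorem \ref{theorem A.1} give $D^2_vu,(-\Delta_x)^{1/3}u\in L_2(\bR^{1+2d}_T)$, and using Lemma \ref{lemma 2.1} to replace $[(-\Delta_x)^{1/3}u]_{\kC^{\alpha}}$ by a constant times $M_1:=\sup_{r_0>0}M_1^{r_0}$, one obtains, with $\theta=\theta(d)$ and $D:=N\nu^{1+2d+\alpha}\delta^{-\theta-2\alpha}\big([Pu]_{L_\infty C^{\alpha/3,\alpha}_{x,v}}+K\delta^{-1}\|D^2_vu\|_{L_\infty}\big)$,
\[
 M_1^{r_0}\le N\nu^{\alpha-1}\delta^{-\theta}M_1^{r_0}+D,\qquad M_2^{r_0}\le N\nu^{\alpha-1}\delta^{-\theta}\big(M_2^{r_0}+M_1\big)+D.
\]
Choosing $\nu=\nu(d,\alpha)\,\delta^{-\theta'}\ge2$ so large that $N\nu^{\alpha-1}\delta^{-\theta}\le1/2$ (possible since $\alpha<1$), absorbing, and letting $r_0\downarrow0$ gives $M_1+M_2\le N\delta^{-\theta_1}\big([Pu]_{L_\infty C^{\alpha/3,\alpha}_{x,v}}+K\delta^{-1}\|D^2_vu\|_{L_\infty}\big)$ with $N=N(d,\alpha)$ and $\theta_1=\theta_1(d,\alpha)$. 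Finally, the interpolation inequality of Remark \ref{remark 1.2} together with Lemma \ref{lemma 2.1} (and $[D^2_vu(t,x,\cdot)]_{C^{\alpha}_v(\bR^d)}\le[D^2_vu]_{\kC^{\alpha}(\bR^{1+2d}_T)}\le NM_2$) yields $\|D^2_vu\|_{L_\infty}\le N\varepsilon^{\alpha}M_2+N\varepsilon^{-2}\|u\|_{L_\infty}$ for all $\varepsilon>0$; picking $\varepsilon$ a suitable power of $\delta$ so that $NK\delta^{-\theta_1-1}\varepsilon^{\alpha}\le1/2$, absorbing $M_2$, and applying Lemma \ref{lemma 2.1} once more to pass from $\mathcal{L}^{2,\alpha}_{\text{kin}}$ back to $\kC^{\alpha}$, we arrive at \eqref{eq4.1.0}.

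\textbf{Main obstacle.} The delicate step is the perturbation estimate over the \emph{long} cylinders $Q_{2\nu r,\,(2^{k+1}/\delta^2)(2\nu r)}(z_0)$: the aspect ratio $2^{k+1}/\delta^2$ forces the frozen-coefficient error to grow like $2^{k\alpha}\delta^{-2\alpha}$ in $k$, so summability against the $2^{-k}$ weight genuinely needs $\alpha<1$ (this is where the hypothesis is sharp), and one must then balance the choice of $\nu$ as a power of $\delta^{-1}$ — large enough to absorb the leading Campanato terms, yet leaving the final constant a fixed negative power of $\delta$ — while carefully tracking which Campanato/H\"older seminorms are known to be finite before the iteration closes. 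The role of choosing $\chi(t)=(Pu)(\gamma_{z_0}(t))$ (rather than $\chi(t)=(Pu)(t,v_0)$ as in the spatially homogeneous case) is exactly to make all perturbation comparisons time-slice comparisons, neutralizing the time-irregularity of $a$ and $Pu$.
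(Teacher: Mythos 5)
Your proof is correct and follows essentially the same strategy as the paper: freeze $a^{ij}$ along the characteristic through $z_0$, take $\chi=\overline{Pu}$, apply Proposition \ref{proposition 3.3}, control the frozen-coefficient errors via Assumption \ref{assumption 1.2} and the $L_\infty C^{\alpha/3,\alpha}_{x,v}$ regularity of $Pu$ (this is precisely the content of Lemma \ref{lemma B.4}, which the paper cites where you rederive the bounds directly), and then remove $\|D^2_v u\|_{L_\infty}$ by the interpolation of Remark \ref{remark 1.2}. The only cosmetic difference is how you close the Campanato step: you divide by $r^{\alpha}$, take truncated suprema $M_i^{r_0}$ (finite by Corollary \ref{corollary A.4}) and absorb after picking $\nu\sim\delta^{-\theta'}$, whereas the paper multiplies by $|Q_r|^{1/2}$ to form the monotone quantity $\psi_i(r)$ and invokes the standard iteration lemma with the auxiliary exponent $\tilde\alpha=(1+\alpha)/2$; the two closings are interchangeable.
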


\begin{proof}
The idea is to perturb the mean-oscillation estimates  in Proposition \ref{proposition 3.3} to bound the Campanato's  seminorms  (see \eqref{eqB.1.10}) of $(-\Delta_x)^{1/3} u$ and $D^2_v u$.
In this  proof, if not specified, we assume $N = N (d, \alpha, K)$.

\textbf{Step 1: freezing the coefficients.}
We fix some $z_0 \in \overline{\bR^{1+2d}_T}$. For any function $h$ on $\bR^{1+2d}_T$,  denote
$$
 \overline{h} (t) = h (t, x_0 - (t-t_0)v_0, v_0),\quad
 \mathcal{P}_0 = \partial_t  - v \cdot D_x - \overline{a}^{i j} (t) D_{v_i v_j}.
$$
By the identity
$$
   \mathcal{P}_0 u -   \overline{ P u} = P u  - \overline{P u} - (\overline{a}^{i j} - a^{i j}) D_{v_i v_j} u
$$
and Proposition \ref{proposition 3.3} with $a$ replaced with $\overline{a}$ and $\chi = \overline{P u}$, there exists $\theta_0 = \theta_0 (d) > 0$ such that
\begin{align}
            \label{eq4.1}
 &   \bigg(|(-\Delta_x)^{1/3} u -  ((-\Delta_x)^{1/3} u)_{ Q_r (z_0) }|^2\bigg)^{1/2}_{Q_r (z_0)}\\
&\leq N  \nu^{-1} \delta^{-\theta_0}
   (|(-\Delta_x)^{1/3} u - ((-\Delta_x)^{1/3} u)_{Q_{\nu r} (z_0)}|^2)^{1/2}_{ Q_{\nu r } (z_0)}\notag\\
    &\quad + N  \nu^{1+2d}  \delta^{-\theta_0} (J_1+J_2),\notag
    \\
                \label{eq4.2}
    &	\bigg(|D_v^2 u -  (D_v^2 u)_{ Q_r (z_0) }|^2\bigg)^{1/2}_{ Q_r (z_0) } \\
&
  \leq
N   \nu^{-1} \delta^{-\theta_0}
    (|D_v^2 u - (D_v^2 u)_{Q_{\nu r} (z_0)}|^2)^{1/2}_{ Q_{\nu r} (z_0)  }\notag\\
&\quad    + N \nu^{-1} \delta^{-\theta_0}
    \sum_{k = 0}^{\infty}
	 2^{-k}
	 (|(-\Delta_x)^{1/3} u - ((-\Delta_x)^{1/3} u)_{ Q_{\nu r, 2^k \nu r} (z_0)}|^2)^{1/2}_{ Q_{\nu r, 2^k \nu r} (z_0)}\notag\\
	 &\quad  + N  \nu^{1+2d}  \delta^{-\theta_0}  (J_1+J_2),\notag
	 \end{align}
	 where $N = N (d)$, and
	 \begin{align*}
	    &J_1 = \sum_{k=0}^\infty 2^{-k}
	    \big(|P u - \overline{P u}|^2\big)^{1/2}_{Q_{ 2\nu r, (2^{k+1}/\delta^2) (2\nu r)} (z_0)},\\
	    & J_2 = \sum_{k=0}^\infty 2^{-k}
	    \big(|(a^{i j} - \overline{a}^{i j}) D_{v_i v_j} u|^2\big)^{1/2}_{Q_{ 2\nu r, (2^{k+1}/\delta^2) (2\nu r)} (z_0)}.
	 \end{align*}

Next, by Lemma \ref{lemma B.4} $(i)$ and Assumption \ref{assumption 1.2},
\begin{align}
\label{eq4.3}
    J_1 &\le N [P u]_{L_{\infty} C^{\alpha/3, \alpha}_{x, v} (\bR^{1+2d}_T) }  \delta^{-2 \alpha} (\nu r)^{\alpha},\\
   \label{eq4.4}
    J_2 &\le N \delta^{-2 \alpha} [a]_{ _{L_{\infty} C^{\alpha/3, \alpha}_{x, v} (\bR^{1+2d}_T) } } \|D^2_v u\|_{ L_{\infty} (\bR^{1+2d}_T) } (\nu r)^{\alpha}\\
   & \le N \delta^{-1-2\alpha} \|D^2_v u\|_{ L_{\infty} (\bR^{1+2d}_T) }  (\nu r)^{\alpha}\notag.
\end{align}

	 \textbf{Step 2: Campanato type argument.}
	\textit{Estimate of $(-\Delta_x)^{1/3} u$.} Denote
	 	 \begin{equation}
	 	                \label{eq4.5}
	    \psi_1 (r) = \bigg(\int_{Q_r (z_0)} |(-\Delta_x)^{1/3} u - ((-\Delta_x)^{1/3} u)_{ Q_r (z_0) }|^2 \, dz\bigg)^{1/2}.
	  \end{equation}
	   Note that $\psi_1$ is a nondecreasing function bounded by $\|(-\Delta_x)^{1/3} u\|_{ L_2 (\bR^{1+2d}_T) }$, which is finite due to Corollary \ref{corollary A.4} and the fact that $u \in S_2 (\bR^{1+2d}_T)$.
	 Multiplying \eqref{eq4.1} by $|Q_r|^{1/2} = c_d r^{1+2d}$ and  using \eqref{eq4.3}  - \eqref{eq4.4} give
	 \begin{align*}
	    \psi_1 (r) \le N \delta^{-\theta_0} \nu^{-2-2d} \psi_1 (\nu r) + N  \delta^{-\theta} (\nu r)^{1+2d+\alpha} (A+B),
	 \end{align*}
	 where $\theta =\theta (d) > 0$,
	 $$
	    A = [P u]_{L_{\infty} C^{\alpha/3, \alpha}_{x, v} (\bR^{1+2d}_T) }, \quad B = \|D_v^2 u\|_{ L_2 (\bR^{1+2d}_T) }.
	 $$
Let $\tilde \alpha=(1+\alpha)/2\in (\alpha,1)$.	 Taking $\nu$ large so that
$N  \nu^{\tilde\alpha-1} \delta^{-\theta_0} = 1$,     we have
    \begin{align*}
        \psi_1 (r) \le  \nu^{-(1+2d+\tilde\alpha)} \psi_1 (\nu r) + N \delta^{-\theta} (\nu r)^{1+2d+\alpha} (A+B).
    \end{align*}
By a standard iteration argument (cf. Lemma 5.13 of \cite{GM_12}),  we get
    $$
       \psi_1 (r) \le   N \delta^{-\theta}  r^{1+2d+\alpha} (A+B).
    $$
    The latter combined with  Lemma \ref{lemma 2.1} yields
    \begin{equation}
                \label{eq4.6}
        [(-\Delta_x)^{1/3} u]_{\kC^{\alpha} (\bR^{1+2d}_T)} \le  N \delta^{-\theta} (A+B).
        \end{equation}

    \textit{Estimate of $D^2_v u$.} Let $\psi_2$ be the function defined by \eqref{eq4.5} with  $(-\Delta_x)^{1/3} u$ replaced with $D^2_v u$.
    Note that by Lemma \ref{lemma B.4} $(ii)$ and \eqref{eq4.6}, the second term on the right-hand side of \eqref{eq4.2} is bounded by
    $$
        N \nu^{-1+\alpha} \delta^{-\theta} r^{\alpha}  [(-\Delta_x)^{1/3} u]_{\kC^{\alpha} (\bR^{1+2d}_T)} \le N \delta^{-\theta} (\nu r)^{\alpha} (A+B).
    $$
    Then, multiplying \eqref{eq4.2} by $|Q_r|^{1/2}$ and using the above inequality combined with \eqref{eq4.3} - \eqref{eq4.4}, we get
    $$
        \psi_2 (r) \le N \delta^{-\theta_0} \nu^{-2-2d} \psi_2 (\nu r) + N  \delta^{-\theta} (\nu r)^{1+2d+\alpha} (A+B).
    $$
    As above, we conclude that
$$
    [D^2_v u]_{\kC^{\alpha} (\bR^{1+2d}_T)} \le  N \delta^{-\theta} (A+B).
$$
Adding the last inequality to \eqref{eq4.6} gives
$$
     [(-\Delta_x)^{1/3} u]_{\kC^{\alpha} (\bR^{1+2d}_T)} +  [D^2_v u]_{\kC^{\alpha} (\bR^{1+2d}_T)} \le  N \delta^{-\theta} (A+B).
$$
By using the interpolation inequality in Remark \ref{remark 1.2}, we may replace $B$  with $\|u\|_{L_{\infty} (\bR^{1+2d}_T)}$ in the last estimate, which proves \eqref{eq4.1.0}.
\end{proof}

\begin{lemma}
        \label{lemma 4.2}
For any $\alpha \in (0, 1)$, there exists $\lambda_0$  as in \eqref{eq1.5.0}
such that for any $\lambda \ge \lambda_0$ and $u \in \mathbb{C}^{\alpha} (\bR^{1+2d}_T) \cap S_2 (\bR^{1+2d}_T)$,
\eqref{eq1.5.1} holds.
\end{lemma}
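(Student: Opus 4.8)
The plan is to reduce \eqref{eq1.5.1} to its $\lambda=1$ version by the parabolic scaling of Lemma \ref{lemma 2.0}, and then to close that estimate by combining Lemma \ref{lemma 4.1} with the kinetic interpolation inequalities and a maximum principle, absorbing all lower-order contributions into the left-hand side.

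\textit{Scaling.} After translating in time we may assume $T\in\{0,\infty\}$, so that the scaling of Lemma \ref{lemma 2.0} with center $z_0=0$ preserves $\bR^{1+2d}_T$. Fix $\lambda\ge 1$, put $r=\lambda^{-1}$, and let $w:=\widetilde u$, $g:=r^2\widetilde f$ in the notation of Lemma \ref{lemma 2.0} with this $r$ and $z_0=0$, where $f:=Pu+b\cdot D_v u+(c+\lambda^2)u$. By Lemma \ref{lemma 2.0}, $w$ solves $\widetilde P w+\beta\cdot D_v w+(\gamma+1)w=g$ on $\bR^{1+2d}_T$, where $\widetilde P$ is the operator in \eqref{eq2.0.1}, $\beta:=r\,\widetilde b$, $\gamma:=r^2\,\widetilde c$; since $r\le 1$, $\widetilde P$ still satisfies Assumptions \ref{assumption 1.1}--\ref{assumption 1.2} with the same $\delta$ and $K$, and $\|\beta\|_{L_{\infty} C^{\alpha/3,\alpha}_{x,v}(\bR^{1+2d}_T)}+\|\gamma\|_{L_{\infty} C^{\alpha/3,\alpha}_{x,v}(\bR^{1+2d}_T)}\le 2rL$. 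Tracking how the seminorms transform under $z\mapsto\widetilde z$ (e.g.\ $[u]_{\kC^\alpha}=\lambda^{\alpha}[w]_{\kC^\alpha}$, $\|D_v u\|_{L_\infty}=\lambda\|D_v w\|_{L_\infty}$, $\|(-\Delta_x)^{1/3}u\|_{L_\infty}=\lambda^{2}\|(-\Delta_x)^{1/3}w\|_{L_\infty}$, and so on), one checks that every term on the left of \eqref{eq1.5.1} equals $\lambda^{2+\alpha}$ times the corresponding $\lambda=1$ quantity for $w$, while $[f]_{L_{\infty} C^{\alpha/3,\alpha}_{x,v}}+\lambda^{\alpha}\|f\|_{L_\infty}=\lambda^{2+\alpha}\big([g]_{L_{\infty} C^{\alpha/3,\alpha}_{x,v}}+\|g\|_{L_\infty}\big)$. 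Hence it is enough to bound the sum $M$ of all the $\lambda=1$ left-hand quantities for $w$ by $N\delta^{-\theta}\big([g]_{L_{\infty} C^{\alpha/3,\alpha}_{x,v}}+\|g\|_{L_\infty}\big)$ with $N=N(d,\alpha,K)$.

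\textit{Maximum principle and bootstrap.} Requiring $\lambda_0^2\ge 2L$ forces $\gamma+1\ge 1/2$; since $\beta\cdot D_v w$ vanishes at interior extrema of $w$, the maximum principle for the degenerate parabolic operator $\widetilde P$ on $\bR^{1+2d}_T$ (for bounded solutions, via a barrier at infinity) gives $\|w\|_{L_\infty}\le 2\|g\|_{L_\infty}$. Next, apply Lemma \ref{lemma 4.1} to $w$ with the operator $\widetilde P$, inserting $\widetilde P w=g-\beta\cdot D_v w-(\gamma+1)w$ and the product rule of Remark \ref{remark 1.4}. Combining this with the elementary bound $[h]_{L_{\infty} C^{\alpha/3,\alpha}_{x,v}}\le 2^{\alpha}[h]_{\kC^\alpha}$ (valid since $\rho(z,z')\ge\tfrac12(|x-x'|^{1/3}+|v-v'|)$ when $t=t'$), the kinetic interpolation inequalities of Corollary \ref{corollary 1.6}, the interpolation inequalities of Remark \ref{remark 1.2}, the identity $\partial_t w-v\cdot D_x w=\widetilde P w+\widetilde a^{ij}D_{v_i v_j}w$ with $\|\widetilde a\|_{L_\infty}\le\delta^{-1}$ and $[\widetilde a]_{L_{\infty} C^{\alpha/3,\alpha}_{x,v}}\le K\delta^{-1}$, and controlling the two nonlocal quantities $\|(-\Delta_x)^{1/3}w\|_{L_\infty}$ and $\sup_{(t,v)}[w(t,\cdot,v)]_{C^{(2+\alpha)/3}(\bR^d)}$ through the pointwise formula \eqref{1.9} and the embedding lemmas of the appendix, one arrives at
\[
   M\le N\big(\varepsilon^{c}+rL\big)\delta^{-\theta}M+N_\varepsilon\,\delta^{-\theta}\big([g]_{L_{\infty} C^{\alpha/3,\alpha}_{x,v}}+\|g\|_{L_\infty}+\|w\|_{L_\infty}\big)
\]
for some fixed $c=c(\alpha)>0$. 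Choosing first $\varepsilon=\varepsilon(d,\alpha,K,\delta)$ so that $N\varepsilon^{c}\delta^{-\theta}\le 1/4$, and then $\lambda_0=\delta^{-\theta}\widetilde\lambda_0(d,\alpha,K,L)$ large enough that $\lambda\ge\lambda_0$ forces $NrL\delta^{-\theta}\le 1/4$ and $\lambda_0^2\ge 2L$, one absorbs the $M$-term, inserts $\|w\|_{L_\infty}\le 2\|g\|_{L_\infty}$, and obtains $M\le N\delta^{-\theta}\big([g]_{L_{\infty} C^{\alpha/3,\alpha}_{x,v}}+\|g\|_{L_\infty}\big)$. Unwinding the scaling gives \eqref{eq1.5.1}, with the constant depending only on $d,\alpha,K$ since $L$ enters only through $\lambda_0$.

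\textit{Main obstacle.} The delicate point is the bootstrap: one must organize the interpolation inequalities so that \emph{every} intermediate norm --- notably the nonlocal quantities $\|(-\Delta_x)^{1/3}w\|_{L_\infty}$ and $\sup_{(t,v)}[w(t,\cdot,v)]_{C^{(2+\alpha)/3}}$, which are not ordinary Hölder norms --- is either absorbed into the left side or traded against $\|w\|_{L_\infty}$, while keeping $\lambda_0$ of the prescribed form $\delta^{-\theta}\widetilde\lambda_0(d,\alpha,K,L)$ and the final constant free of $L$.
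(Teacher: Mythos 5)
Your strategy (scale by $r=\lambda^{-1}$ to reduce to $\lambda=1$, then bootstrap via Lemma \ref{lemma 4.1} and interpolation) is genuinely different from the paper's, which proves the $\lambda$-weighted bounds directly by Agmon's method of adding an auxiliary variable $v_{d+1}$ and applying Lemma \ref{lemma 4.1} in $\bR^{1+2(d+1)}_T$. The scaling computation itself is correct: every term on the left of \eqref{eq1.5.1} scales by $\lambda^{2+\alpha}$, which is a clean observation. However, the closing bootstrap has a genuine gap, so the argument as written does not go through.

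The central problem is circularity in how you control $[D_v w]_{\kC^\alpha(\bR^{1+2d}_T)}$ (and to a lesser extent $[w]_{\kC^\alpha}$). Lemma \ref{lemma 4.1} produces only $[D^2_v w]_{\kC^\alpha}$ and $[(-\Delta_x)^{1/3}w]_{\kC^\alpha}$; to populate the $[D_v w]_{\kC^\alpha}$ slot on the left-hand side you invoke Corollary \ref{corollary 1.6}, but that corollary is proved \emph{from} Theorem \ref{theorem 1.5} $(ii)$ (via Remark \ref{remark 1.8}), which in turn depends on the very Lemma \ref{lemma 4.2} you are proving. This is not a gap you can sidestep by the elementary tools in the paper: Lemma \ref{lemma 5.1} applied to $D_v w$ would require $\|\partial_t D_v w - v\cdot D_x D_v w\|_{L_\infty} = \|D_v(\partial_t w - v\cdot D_x w) + D_x w\|_{L_\infty}$, i.e.\ a bound on $\|D_x w\|_{L_\infty}$, which is not controlled by any quantity appearing in \eqref{eq1.5.1} (since $(2+\alpha)/3 < 1$), and Remark \ref{remark 1.2} only interpolates in $v$ at fixed $(t,x)$. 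This is precisely the obstruction that Agmon's method in the paper is designed to remove: writing $\lambda D_{v_i} u\sin(\lambda v_{d+1}+\pi/4)\zeta\zeta$ as $-D_{v_{d+1}v_i}\widehat U$ plus a harmless error turns the first-order $D_v$ term into a \emph{second-order} $\widehat v$-derivative of the extended function $\widehat U$, to which Lemma \ref{lemma 4.1} applies directly. You have not provided an alternative mechanism for this.

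Two further weak points, secondary but worth flagging. First, the maximum principle step $\|w\|_{L_\infty}\le 2\|g\|_{L_\infty}$ is asserted ``via a barrier at infinity'' but not proved; for the KFP operator with merely measurable-in-$t$ coefficients, unbounded drift $v\cdot D_x$, a possibly infinite time interval $(-\infty,T)$, and a solution only in $\mathbb{C}^\alpha\cap S_2$, this is not a routine citation and the paper deliberately avoids it (the $L_\infty$ bound on $u$ falls out of Agmon's method for free). Second, you assert that the two nonlocal quantities $\|(-\Delta_x)^{1/3}w\|_{L_\infty}$ and $\sup_{(t,v)}[w(t,\cdot,v)]_{C^{(2+\alpha)/3}}$ can be ``controlled through \eqref{1.9} and the embedding lemmas of the appendix,'' but this is exactly the step the paper handles carefully via \eqref{eq1.9.1} and the boundedness of $(1+(-\Delta_x)^{1/3})^{-1}:C^{\alpha/3}\to C^{(2+\alpha)/3}$, \emph{after} the $[(-\Delta_x)^{1/3}u]_{\kC^\alpha}$ and $[u]$ bounds are in hand; inserting them into a self-improving inequality as you propose needs to be shown, not asserted.
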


\begin{proof}
\textbf{Step 1: case when $b \equiv 0$, $c \equiv 0$.}
We use S. Agmon's method to  derive \eqref{eq1.5.1} from \eqref{eq4.1.0}. In particular, by this method, we are able to prove the bounds of $D^k_v u, k = 0, 1, 2$.
These estimates imply the validity of \eqref{eq1.5.1} for
$(-\Delta_x)^{1/3} u$ and $\partial_t u - v \cdot D_x u$.

\textit{Agmon's method} (cf. Lemma 6.3.8 in \cite{Kr_08}).
Denote
 \begin{align*}
	& 
	\widehat x = (x_1, \ldots, x_{d+1}),
	 \quad  \widehat v = (v_1, \ldots,  v_{d+1}),
	 \quad  \widehat z = (t, \hat x, \hat v),\\
& 
\widehat P (\widehat z) = \partial_t - \sum_{i = 1}^{d+1} v_i D_{x_i} - \sum_{i, j = 1}^{d} a^{i j} (z) D_{v_i v_j}  - D_{v_{d+1} v_{d+1}}.
\end{align*}

Let  $\zeta$ be a smooth cutoff function on $\bR$ such that  $\zeta (y) = 1$ for $y\in (-1,1)$ and
 denote for $k\ge 1$,
$$
	\widehat U (\widehat z) = u (z)  \cos (\lambda v_{d+1} + \pi/4) \zeta (v_{d+1}/k) \zeta (x_{d+1}/k^3).
$$
We choose such $\widehat U$  due to the following technical reasons:
\begin{itemize}
\item[--]
$
    \widehat U   \in \mathbb{C}^{\alpha} (\bR^{1+2(d+1)}_T) \cap S_2 (\bR^{1+2(d+1)}_T),
$
so that Lemma \ref{lemma 4.1} can be applied to $\widehat U$.

\item[--] $\zeta (x_{d+1}/k^3) \zeta (v_{d+1}/k)$ and all its partial derivatives are of class $\kC^{\alpha} (\bR^{1+2(d+1)}_T)$ (see Remark \ref{remark 1.1}).
This fact is used in the estimate \eqref{eq4.14} below.
\end{itemize}
Computing directly, we get
\begin{align}
                    \label{eq4.7}
&\lambda^2 \widehat U (\widehat z) =	\lambda^2 u (z) \zeta (v_{d+1}/k) \cos (\lambda v_{d+1} + \pi/4) \zeta (x_{d+1}/k^3) \\
 &
= - D_{v_{d+1} v_{d+1}} \widehat U (\widehat z)+ u (z)   \zeta (x_{d+1}/k^3) \big(k^{-2}\zeta'' (v_{d+1}/k) \cos (\lambda v_{d+1} + \pi/4) \notag\\
&\quad - 2 \lambda  k^{-1}\zeta' (v_{d+1}/k) \sin (\lambda v_{d+1} + \pi/4)\big),\notag\\
          \label{eq4.8}
J&:=\lambda D_{v_i} u (z) \sin (\lambda v_{d+1} + \pi/4) \zeta (v_{d+1}/k) \zeta (x_{d+1}/k^3) \\
 &= - D_{v_{d+1} v_i }  \widehat U (\widehat z)
+ k^{-1} D_{v_i} u (z) \zeta' (v_{d+1}/k) \zeta (x_{d+1}/k^3) \cos (\lambda v_{d+1} + \pi/4)  \notag.
\end{align}	
We will extract the estimates of $u$ and $D_v u$ from the above identities.

\textit{Estimate of $u,  D_{v} u$.}
 By the product rule inequality in Remark \ref{remark 1.4}, for any $h_1, h_2   \in \kC^{\alpha}  (\bR^{1+2d}_T)$ or $L_{\infty} C^{\alpha/3, \alpha}_{x, v} (\bR^{1+2d}_T)$,
 and  any $\lambda > 1$, we have
  \begin{equation}
            \label{eq4.11}
   [h_1 (\lambda^2  \cdot, \lambda^3 \cdot, \lambda \cdot) h_2]_X \le N (h_1 , \alpha) ( [h_2]_X + \lambda^{\alpha} \|h_2\|_{ L_{\infty} (\bR^{1+2d}_T) }),
\end{equation}
where $X$ is either $\kC^{\alpha} (\bR^{1+2d}_T)$ or $L_{\infty} C^{\alpha/3, \alpha}_{x, v} (\bR^{1+2d}_T)$.
Furthermore, for $k, \lambda  \ge 1$, one has
\begin{equation}
                \label{eq4.13}
        N_1 \lambda^{\alpha} \le [\cos (\lambda \cdot + \pi/4) \zeta (\cdot/k)]_{ C^{\alpha} (\bR) } \le N_1^{-1} \lambda^{\alpha}
\end{equation}
and a similar bound with sine instead of cosine,
where $N_1 = N_1 (\alpha, \zeta)$.
Combining \eqref{eq4.7} - \eqref{eq4.13} gives
\begin{align}
\label{eq4.14}
  &   \lambda^2 [u]_{ \kC^{\alpha} (\bR^{1+2d}_T) } + \lambda [D_v u]_{ \kC^{\alpha} (\bR^{1+2d}_T) }  + \lambda^{\alpha} \|\lambda^{2} |u| + \lambda |D_v u|\|_{ L_{\infty} (\bR^{1+2d}_T) }  \\
  &\le  N \lambda^2 [\widehat U]_{ \kC^{\alpha} (\bR^{1+2(d+1)}_T) }
  +   N  [J]_{ \kC^{\alpha} (\bR^{1+2(d+1)}_T) }
  \notag\\
  & \le  N [D^2_{\widehat v} \,  \widehat U]_{ \kC^{\alpha} (\bR^{1+2(d+1)}_T) }
  + N \lambda^{\alpha} k^{-1} (\|u\|_{\kC^{\alpha} (\bR^{1+2d}_T) } + \|D_v u\|_{\kC^{\alpha} (\bR^{1+2d}_T) })\notag,
   \end{align}
where $N = N (d, \alpha)$.

\textit{Estimate of $D^2_{\widehat v} \widehat U$.}
  Since $\widehat U \in \mathbb{C}^{\alpha} (\bR^{1+2(d+1)}_T) \cap S_2 (\bR^{1+2(d+1)}_T)$, by Lemma \ref{lemma 4.1},
\begin{equation}
            \label{eq4.10}
    \begin{aligned}
         &   [D^2_{\widehat v} \,  \widehat U]_{ \kC^{\alpha} (\bR^{1+2(d+1)}_T) } \\
   & \le N \delta^{-\theta} \big([\widehat P \widehat U (\widehat z)]_{ L_{\infty} C^{\alpha/3, \alpha}_{x, v} (\bR^{1+2(d+1)}_T)} + \|\widehat U\|_{ L_{\infty} (\bR^{1+2(d+1)}_T) }),
        \end{aligned}
    \end{equation}
where
\begin{align}
         \label{eq4.9}
&   \widehat P \widehat U (\widehat z)  = \zeta (v_{d+1}/k)  \zeta (x_{d+1}/k^3) \cos (\lambda v_{d+1} + \pi/4)
(P u (z) + \lambda^2 u (z)) \\
	&-  u (z) \zeta (x_{d+1}/k^3) \big(k^{-2}\zeta'' (v_{d+1}/k) \cos (\lambda v_{d+1} + \pi/4) \notag\\
&\quad - 2 k^{-1}\lambda \zeta' (v_{d+1}/k) \sin (\lambda v_{d+1}+ \pi/4)\big)\notag\\
	&  - u (z) (v_{d+1} \zeta (v_{d+1}/k)  k^{-3} \zeta' (x_{d+1}/k^3)) \cos (\lambda^{1/2} v_{d+1} + \pi/4)\notag.
   \end{align}
By  \eqref{eq4.10} - \eqref{eq4.9} and \eqref{eq4.11}, for $\lambda , k \ge 1$,
  \begin{equation}
              \label{eq4.12}
  \begin{aligned}
  & \lambda^{\alpha} \|D_v^2 u\|_{ L_{\infty} (\bR^{1+2d}_T) }  + [D^2_v u]_{ \kC^{\alpha} (\bR^{1+2d}_T) } \le N   [D^2_{\widehat v} \,  \widehat U]_{ \kC^{\alpha} (\bR^{1+2(d+1)}_T) } \\
 &   \le
 N \delta^{-\theta} ([P u + \lambda^2 u]_{ L_{\infty}  C^{\alpha/3, \alpha}_{x, v} (\bR^{1+2d}_T)}+\|u\|_{L_\infty(\bR^{1+2d}_T)})\\
  &\quad +   N \delta^{-\theta} \lambda^{\alpha} (\|P u + \lambda^2 u\|_{ L_{\infty}  (\bR^{1+2d}_T)} + k^{-1}\|u\|_{ L_{\infty}  C^{\alpha/3, \alpha}_{x, v} (\bR^{  1+2d}_T)}),
\end{aligned}
\end{equation}
where $N = N (d, \alpha) > 0$.

Combining \eqref{eq4.14} with \eqref{eq4.12} and sending $k\to \infty$, we get
\begin{equation*}
\begin{aligned}
    &     \lambda^2 [u]_{ \kC^{\alpha} (\bR^{1+2d}_T) } + \lambda [D_v u]_{ \kC^{\alpha} (\bR^{1+2d}_T) } + [D^2_v u]_{ \kC^{\alpha} (\bR^{1+2d}_T) } \\
&
  \quad  +\lambda^{2+\alpha} \|u\|_{L_\infty(\bR^{1+2d}_T)} + \lambda^{1+\alpha} \|D_v u\|_{L_\infty(\bR^{1+2d}_T)}
    + \lambda^{\alpha} \|D_v^2 u\|_{L_\infty(\bR^{1+2d}_T)} \\
&  \le N  \delta^{-\theta} ( [P u + \lambda^2 u]_{ L_{\infty}  C^{\alpha/3, \alpha}_{x, v} (\bR^{1+2d}_T)}+\|u\|_{L_\infty(\bR^{1+2d}_T)})\\
&\quad +  N  \delta^{-\theta} \lambda^{\alpha}
 \|P u + \lambda^2 u\|_{ L_{\infty}  (\bR^{1+2d}_T)}\notag.
\end{aligned}
\end{equation*}
 By taking $\lambda_0\ge \max\{1,  2N\delta^{-\theta}\}$, we obtain the bounds for $u, D_v u$, and $D^2_v u$.

\textit{Estimates of the transport term.}
By the identity
\begin{equation}
            \label{eq4.17}
    \partial_t u - v \cdot D_x u = (P + \lambda^2) u - a^{i j} D_{v_i v_j} u  - \lambda^2 u
\end{equation}
and Assumptions \ref{assumption 1.1}   - \ref{assumption 1.2}, and the product rule inequality, we get
\begin{align*}
   & [\partial_t u - v \cdot D_x u]_{ L_{\infty} C^{\alpha/3, \alpha}_{x, v}  (\bR^{1+2d}_T) } \le  [(P + \lambda^2) u]_{ L_{\infty} C^{\alpha/3, \alpha}_{x, v}  (\bR^{1+2d}_T) } \\
   & +  N \delta^{-1} \|D^2_v u\|_{ L_{\infty} C^{\alpha/3, \alpha}_{x, v}  (\bR^{1+2d}_T) } + \lambda^2 [u]_{ L_{\infty} C^{\alpha/3, \alpha}_{x, v}  (\bR^{1+2d}_T) },
\end{align*}
and the right-hand is bounded by that of \eqref{eq1.5.1}.
Similarly, we can bound the $L_{\infty}$ norm of the transport term.

\textit{Estimates of $(-\Delta_x)^{1/3} u$ and the $C^{(2+\alpha)/3}_x$ seminorm.}
First,  due to Lemma \ref{lemma 4.1} and  the estimates of $u$ in \eqref{eq1.5.1}, we get
\begin{align*}
   &  [(-\Delta_x)^{1/3} u]_{ \kC^{\alpha} (\bR^{1+2d}_T) }\\
   & \le N  \delta^{-\theta} \big([P u + \lambda^2 u]_{ L_{\infty}  C^{\alpha/3, \alpha}_{x, v} (\bR^{1+2d}_T)} + \lambda^2 [u]_{ L_{\infty} C^{\alpha/3, \alpha}_{x, v} (\bR^{1+2d}_T) } + \|u\|_{L_{\infty} (\bR^{1+2d}_T)}\big)\\
   &\le N  \delta^{-\theta} \big([P u + \lambda^2 u]_{ L_{\infty}  C^{\alpha/3, \alpha}_{x, v} (\bR^{1+2d}_T)} + \lambda^{\alpha}\|P u + \lambda^2 u\|_{ L_{\infty} (\bR^{1+2d}_T) }\big).
\end{align*}

 Next, by a  mollification argument, we have the interpolation inequality
\begin{equation}
\label{eq1.9.1}
\begin{aligned}
   & \|(-\Delta_x)^{1/3} u\|_{ L_{\infty} (\bR^{1+2d}_T) }  \le N (d, \alpha) \big(\varepsilon^{\alpha} [(-\Delta_x)^{1/3} u]_{  \kC^{\alpha} (\bR^{1+2d}_T) } \\
   & +   \varepsilon^{-2} \|u\|_{ L_{\infty} (\bR^{1+2d}_T) }\big), \, \,  \forall \,\varepsilon > 0,
\end{aligned}
\end{equation}
so that  the term $\lambda^{\alpha} \|(-\Delta_x)^{1/3} u\|_{ L_{\infty} (\bR^{1+2d}_T) }$ is bounded by the right-hand side of \eqref{eq1.5.1}.
Furthermore, by using the fact that the operator
$$
(1+ (-\Delta_x)^{1/3})^{-1}: C^{\alpha/3} (\bR^d) \to C^{(2+\alpha)/3} (\bR^d)
$$
is bounded (see, for example Theorem 1.3 in \cite{DK_13}) and a scaling argument, we conclude that
$$
    \text{ sup}_{ (t, v) \in \bR^{1+d}_T }  [u (t, \cdot, v)]_{ C^{(2+\alpha/3)} (\bR^d) }
$$
is also bounded by the right-hand side of \eqref{eq1.5.1}.

\textbf{Step 2: adding the lower-order terms.} By using \eqref{eq1.5.1} and the triangle inequality, we obtain \eqref{eq1.5.1} with the right-hand side replaced with
\begin{align*}
  & N \delta^{-\theta}  [P u + b \cdot D_v u + (c + \lambda^2) u]_X \\
  &\quad + N \delta^{-\theta}  \lambda^{\alpha} \big(\|P u + b \cdot D_v u + (c + \lambda^2) u\|_{ L_{\infty} (\bR^{1+2d}_T) }
  + \|b \cdot D_v u\|_{ X} + \|c u\|_{ X }\big),
\end{align*}
where $X = L_{\infty} C^{\alpha/3, \alpha}_{x, v} (\bR^{1+2d}_T)$.
By the product rule inequality (see Remark \ref{remark 1.4}) and Assumption \ref{assumption 1.3},
\begin{equation}
            \label{eq4.16}
    \|b \cdot D_v u\|_{X} + \|c u\|_{X}  \le  L (\|D_v u\|_{X} + \|u\|_{X}).
\end{equation}
For sufficiently large  $\lambda \ge \lambda_0$ with $\lambda_0$ as in \eqref{eq1.5.0}, the terms on the right-hand side of  \eqref{eq4.16} can be absorbed into the left-hand side of \eqref{eq1.5.1}.
\end{proof}

\begin{proof}[Proof of Theorem \ref{theorem 1.5}] We prove the assertions in the following order: $(iii)$, $(ii)$, $(iv)$, and $(i)$. In particular, we will see that  $(ii)$ is an immediate corollary of $(iii)$.

\textbf{Proof of $(iii)$ and $(ii)$.}
\textit{Uniqueness.} We only need to show that in the case when $f \equiv 0$, any solution $u$ of class $C^{2, \alpha} (\bR^{1+2d}_T)$  must be identically $0$.  Let $\phi \in C^{\infty}_0 (\bR^{1+2d})$ be a function such that $\phi  = 1$ on $\widetilde Q_1$ and denote $\phi_n (z) = \phi (t/n^2, x/n^3, v/n)$.
Then, $u_n: = u \phi_n \in S_2 (\bR^{1+2d})$ satisfies
\begin{align*}
  &  P u_n + b \cdot D_v u_n + (c+\lambda^2) u_n\\
  & = u P \phi_n  - 2 (a D_v \phi_n) \cdot D_v u + (b \cdot D_v \phi_n) u =: f_n.
\end{align*}
Then by Lemma \ref{lemma 4.2} and the product rule inequality in Remark \ref{remark 1.4},
for any $\lambda \ge \lambda_0$,
\begin{align*}
    \|u \phi_n\|_{ L_{\infty} (\bR^{1+2d}_T) } \le N  \|f_n\|
    \le N n^{-1} (\|u\|+\|D_v u\|),
\end{align*}
where $\|\cdot\|$ is the $L_{\infty} C^{\alpha/3, \alpha}_{x, v} (\bR^{1+2d}_T)$ norm, and
$N = N (d, \alpha, K, L, \delta, \lambda)$.
Passing to the limit as $n \to \infty$ in the above inequality gives $u \equiv 0$.

\textit{Existence.}
Proof by  a compactness argument.
Let $\eta = \eta (x, v) \in C^{\infty}_0 (\bR^{2d}), \xi \in C^{\infty}_0 (\bR^{1+2d}_T)$ be  functions such that $\int \eta \, dx dv = 1$,
and $\xi (z) \in [0, 1] \,  \forall z$,  $\xi  = 1$ on $\widetilde Q_1$, and denote for $n \ge 1$,
\begin{align*}
& \eta_n (x, v) = n^{4d} \eta (n^3 x, n v), \quad \xi_n (z) = \xi (t/n^2, x/n^3, v/n),\\
&    h_n = h \ast \eta_n, \quad \text{where } h = a, b, c,  \\
& f_n  = (f \ast \eta_n) \xi_n.
\end{align*}
 Note that $a_n, b_n, c_n, f_n$ satisfy the assumptions of Corollary \ref{corollary A.2}, and
furthermore, by the product rule inequality (see Remark \ref{remark 1.4}),
$$
    [f_n]_{L_{\infty} C^{\alpha/3, \alpha}_{x, v} (\bR^{1+2d}_T)}  \le [f]_{L_{\infty} C^{\alpha/3, \alpha}_{x, v} (\bR^{1+2d}_T)}
    + N (\xi) n^{-\alpha} \|f\|_{ L_{\infty} (\bR^{1+2d}_T) }.
$$
Hence,
by Corollary \ref{corollary A.2}, the equation
$$
    P u_n + b \cdot D_v u_n + (c + \lambda^2) u_n  = f_n
$$
has a unique solution $\mathbb{C}^{2, \alpha} (\bR^{1+2d}_T) \cap S_2 (\bR^{1+2d}_T)$.
Then, by Lemma \ref{lemma 4.2}
there exists
$\lambda_0$ as in  \eqref{eq1.5.0}
such that for any $\lambda \ge \lambda_0$,
\begin{equation}
                \label{eq1.5.7}
\begin{aligned}
  &  \lambda^{2+\alpha} \|u_n\|_{ L_{\infty} (\bR^{1+2d}_T) }   + \lambda^{2}  [u_n] +  \lambda^{1+\alpha} \|D_v u_n\|_{ L_{\infty} (\bR^{1+2d}_T) } \\
  &\quad + \lambda [D_v u_n]
   + \lambda^{\alpha} \||D^2_v u_n| + |(-\Delta_x)^{1/3} u_n|\|_{ L_{\infty} (\bR^{1+2d}_T) } \\
  &\quad + [D^2_v u_{ n }] + [(-\Delta_x)^{1/3} u_{ n}]
  + \sup_{ (t, v) \in \bR^{1+d}_T} [u_n (t, \cdot, v)]_{ C^{(2+\alpha)/3} (\bR^d) } \\
  &  \le  N  \delta^{-\theta}  \big([f_n]_{L_{\infty} C^{\alpha/3, \alpha}_{x, v} (\bR^{1+2d}_T)}
  + \lambda^{\alpha} \|f_n\|_{  L_{\infty} (\bR^{1+2d}_T) }\big)\\
  & \le  N  \delta^{-\theta}  \big([f]_{L_{\infty} C^{\alpha/3, \alpha}_{x, v} (\bR^{1+2d}_T)}
  + (\lambda^{\alpha} +  n^{-\alpha}) \|f\|_{  L_{\infty} (\bR^{1+2d}_T) }\big),
  \end{aligned}
\end{equation}
where $[\, \cdot\, ]$ is the $\kC^{\alpha} (\bR^{1+2d}_T)$ seminorm and $N = N (d, \alpha, K)$.

Using the Arzela-Ascoli theorem and Cantor's diagonal argument, from \eqref{eq1.5.7} we conclude that there exists $u \in \kC^{2, \alpha} (\bR^{1+2d}_T)$ solving \eqref{eq1.5.4}, and,  furthermore,   \eqref{eq1.5.1} holds  with $P u + b \cdot D_v u + (c+\lambda^2) u$ replaced with $f$ for all the terms on the left-hand side excluding the transport term.
The latter is estimated  as in the proof of Lemma \ref{lemma 4.2} (see p. \pageref{eq4.17}) by using Eq. \eqref{1.1}.
Thus, $(iii)$ is true.
Moreover, the a priori estimate proved for the solution of \eqref{1.1} combined with the uniqueness part implies the validity of the assertion $(ii)$.

\textbf{Proof of $(iv)$.} The assertion is derived in a standard way by using $(ii)$ and an exponential weight in the temporal variable.

\textbf{Proof of $(i)$.} Note that \eqref{eq1.5.2} does not follow from \eqref{eq1.5.1} by  setting $\lambda = \lambda_0$ in \eqref{eq1.5.1}.
Indeed, the latter gives an estimate weaker than \eqref{eq1.5.1} since it has extra terms involving $[u]_{L_{\infty} C^{\alpha/3, \alpha}_{x, v} (\bR^{1+2d}_T)}$
and $\|P u + b \cdot D_v u+ c u\|_{L_{\infty} (\bR^{1+2d}_T)}$. To avoid this issue, we prove that Lemma \ref{lemma 4.1} still holds if  $u \in \kC^{2,\alpha} (\bR^{1+2d}_T)$.

\textit{Step 1.} We claim that Proposition \ref{proposition 3.3} still  holds if $u \in \kC^{2, \alpha} (\bR^{1+2d}_T)$.
Instead of repeating the argument, we list some places therein that need to be modified.
\begin{itemize}
    \item Note that $f  = P_0 u \in L_{\infty} C^{\alpha/3, \alpha}_{x, v} (\bR^{1+2d}_T)$ and that by Theorem \ref{theorem 1.5} $(iv)$, the Cauchy problem \eqref{eq3.3.4} has a unique solution  $u_1 \in \kC^{2,\alpha} ((t_0 - (2\nu r)^2 , t_0) \times \bR^{2d})$.\\

    \item We need to show that Lemma \ref{lemma 3.5} still holds for $u \in \kC^{2, \alpha} ((-4, 0) \times \bR^{2d})$, which would also imply that Lemma \ref{lemma 3.8} is valid for such $u$. First, by Theorem \ref{theorem 1.5}, $(-\Delta_x)^{1/3} u \in \kC^{\alpha} ((-1, 0)\times \bR^{2d})$ (cf. the proof of Corollary \ref{corollary A.4}), and then, due to  Lemma \ref{lemma B.4} $(ii)$, the series on the right-hand side of \eqref{eq3.5.0} converges. Second, it follows from  $u \in \kC^{2, \alpha} ((-4, 0) \times \bR^{2d})$ that \eqref{eq2.1.1} holds. The rest of the argument is the same as that of Lemma \ref{lemma 3.5}.
\end{itemize}

\textit{Step 2: proof of \eqref{eq1.5.2}.}
The argument is the same as that of Lemma \ref{lemma 4.1} with one modification: we do not need to use an iteration argument  to conclude  that $(-\Delta_x)^{1/3} u, D^2_v u \in \kC^{\alpha} (\bR^{1+2d}_T)$ (see Step 2 therein) since the latter follows from the definition of $\kC^{2, \alpha} (\bR^{1+2d}_T)$. Furthermore, multiplying \eqref{eq4.1} - \eqref{eq4.2} by $r^{-\alpha}$, taking supremum over $r > 0$, and then taking $\nu$ sufficiently large, we conclude that \eqref{eq1.5.2} holds for  $(-\Delta_x)^{1/3} u$ and $D^2_v u$.
The  $C^{(2+\alpha)/3}_x$ seminorm of $u$ is estimated in the same way as in the proof of Lemma \ref{lemma 4.2} (see  p. \pageref{eq1.9.1}).
Finally, as in the proof of Lemma \ref{lemma 4.2}, we extract the estimate of the transport term from the identity \eqref{eq4.17} by the product rule inequality and the standard interpolation inequality.
\end{proof}

\section{Proof of Corollaries  \ref{corollary 1.6} - \ref{corollary 1.7}}
                        \label{section 5}

\begin{proof}[Proof of Corollary \ref{corollary 1.6}]
By a scaling argument, it suffices to prove the estimate in the case when $\varepsilon = 1.$

By \eqref{eq1.5.1} with $a^{i j} = \delta_{i j}$, $b=0$, $c=0$, and $\lambda = 1$ (see Remark \ref{remark 1.8}), we have
\begin{equation*}
\begin{aligned}
 &   [u]_{ \kC^{\alpha}  (\bR^{1+2d}_T) } + [D_v u]_{ \kC^{\alpha}  (\bR^{1+2d}_T) }
 + [D^2_v u]_{ \kC^{\alpha}  (\bR^{1+2d}_T) }\\
 &\quad +\text{ sup}_{ (t, v) \in \bR^{1+d}_T }  \|u (t, \cdot, v)\|_{ C^{(2+\alpha)/3} (\bR^d) }  \\
 & \le N  (\|\partial_t u - v \cdot D_x u\| + \|u\| + \|\Delta_v u\|),
  \end{aligned}
 \end{equation*}
  where $\|\cdot\|$ stands for the $L_{\infty}  C^{\alpha/3, \alpha}_{x, v} (\bR^{1+2d}_T)$ norm.

By  interpolating between $C^{(2+\alpha)/3}_x$ and  $C_b$ and between $C^{2+\alpha}_v$ and $C_b$, we may replace the last two terms on the right-hand side of the last inequality  with
$$
    N [D^2_v u]_{L_{\infty}  C^{\alpha/3, \alpha}_{x, v} (\bR^{1+2d}_T)} +  N \|u\|_{L_{\infty}  (\bR^{1+2d}_T)}.
$$
The corollary is proved.
   \end{proof}

\begin{proof}[Proof of Corollary \ref{corollary 1.8}]
$(i)$ To prove the assertion $(i)$,   we apply Theorem \ref{theorem 1.5} $(ii)$ with $a^{i j} =\delta_{i j}$, $b=0$, $c=0$, and $\lambda \to 0$ (see Remark \ref{remark 1.8}).

$(ii)$ Let  $\phi \in C^{\infty}_0 (\widetilde Q_{ (r+R)/2 })$ be a function such that $\phi = 1$ on $Q_{r}$.
Then, by the first assertion, $u \phi \in \kC^{2, \alpha} (\bR^{1+2d}_T)$. Hence, by \eqref{eq1.5.2}  and the product rule inequality (see Remark \ref{remark 1.4}), we have
\begin{align*}
 &
   [D^2_v u]_{\kC^{\alpha} (Q_{ r })}
    \le N  [(\partial_t - v \cdot D_x) (u \phi)]_{ L_{\infty} C^{\alpha/3, \alpha}_{x, v} (\bR^{1+2d}_T)}\\
    & + N [\Delta_v (u \phi)]_{ L_{\infty} C^{\alpha/3, \alpha}_{x, v} (\bR^{1+2d}_T)}
    +N \|u \phi\|_{ L_{\infty} (\bR^{1+2d}_T)}\\
 &
    \le  N (\|\partial_t u - v \cdot D_x u\|
    +   \|u\| +  \|D_v u\| + \|D^2_v u\|) = N \|u\|_{ \mathbb{C}^{2, \alpha} (Q_2) },
\end{align*}
where by $\|\cdot\|$ we mean the $L_{\infty} C^{\alpha/3, \alpha}_{x, v} (Q_2)$ norm.
\end{proof}

\begin{proof}[Proof of Corollary \ref{corollary 1.7}]
  The proof is standard (cf. Theorem 7.1.1 in \cite{Kr_96}).
  Let $\xi \in C^{\infty}_{\text{loc}} (\bR)$ be a function such that
$\xi = 0$ if $t \geq 1$, and $\xi = 1$ if $t \leq 0$.
We denote
\begin{align*}
& f = P u + b \cdot D_v u + c u, \\
	&r_0 = r, \quad r_n  = r + (R - r) \sum_{k = 1}^n 2^{-k}, n \ge 1,\\
	&\zeta_n (t, v) = \xi \big(2^{2(n+1)} (R - r)^{-2} (- r^2_{n} - t)\big)   \, \,
	   \xi \big(2^{(n+1)} (R - r)^{-1} (|v| -  r_{n})\big)\\
&	   \quad \quad \quad \quad \times \xi \big(2^{3(n+1)}  (R-r)^{-3} (|x| -     r^{3}_{n} )\big),
\end{align*}
and note that
 $\zeta_n$ is a smooth function such that $\zeta_n = 1$ on $Q_{ r_n}$, and $\zeta_n = 0$ on $\bR^{1+2d}_0 \cap Q^c_{ r_{n+1}}$.

Next, $u \zeta_n$ satisfies the identity
$$
	(P  + b \cdot D_v  + c  +\lambda^2)  (u \zeta_n)  = f \zeta_n
	+ u (P \zeta_n + b \cdot D_v \zeta_n)
	- 2 (a D_{v} u) \cdot D \zeta_n
	+ \lambda^2 u \zeta_n.
$$
Then, by Theorem \ref{theorem 1.5} $(ii)$, for any $\lambda \ge \lambda_0$,
\begin{align}
 \label{eq1.7.4}
&
\lambda^{2} \|u \zeta_n\|_{\kC^{ \alpha} (\bR^{1+2d}_T)} + \lambda \|D_v (u \zeta_n)\|_{\kC^{ \alpha} (\bR^{1+2d}_T)}  + \|D^2_v (u \zeta_n)\|_{\kC^{\alpha}
(\bR^{1+2d}_T)} \\
&\quad + \|(\partial_t  - v \cdot D_x) (u \zeta_n)\|_{L_{\infty} C^{\alpha/3, \alpha}_{x, v} (\bR^{1+2d}_T)}
+\text{ sup}_{ (t, v) \in \bR^{1+d}_T }  \|u \zeta_n (t, \cdot, v)\|_{ C^{(2+\alpha)/3} (\bR^d) }  \notag\\
&\le N \delta^{-\theta} \lambda^{\alpha}\sum_{k = 1}^4 I_k,\notag
\end{align}
where 
$$
    I_1 = \|f \zeta_n\|, \,\, I_2 = \|u (P \zeta_n + b \cdot D_v \zeta_n)\|,\,\,I_3 = \|(a D_v u) \cdot D \zeta_n\|,  \, \,
    I_4 = \lambda^2 \|u \zeta_n\|,
$$
and
$\|\cdot\|$ is the $L_{\infty} C^{\alpha/3, \alpha}_{x, v} (\bR^{1+2d}_T)$ norm.

We now estimate the terms $I_k, k = 1 -4$.
In the sequel, $N = N (d, \alpha, K, L, r, R).$
By the product rule inequality (cf. Remark \ref{remark 1.4}),
\begin{equation*}
\begin{aligned}
    I_1  &\le N \|f\|_{L_{\infty} C^{\alpha/3, \alpha}_{x, v} (Q_{ r_{n+1}}) } \|\zeta_n\|_{L_{\infty} C^{\alpha/3, \alpha}_{x, v}  (\bR^{1+2d}_0)  }\\
  &\le N  2^{n \alpha} \|f\|_{L_{\infty} C^{\alpha/3, \alpha}_{x, v} (Q_{ r_{n+1}}) }.    \end{aligned}
\end{equation*}
Arguing as above and using Assumptions \ref{assumption 1.1} - \ref{assumption 1.3} give
\begin{align*}
&   
I_2 \le N \delta^{-1}   2^{(3 + \alpha) n}  \|u\|_{L_{\infty} C^{\alpha/3, \alpha}_{x, v} (Q_{ r_{n+1}}) },\\
&
I_3  \le N \delta^{-1} 2^{(1+\alpha) n}  \|D_v u\|_{L_{\infty} C^{\alpha/3, \alpha}_{x, v} (Q_{ r_{n+1}}) },\\
& I_4 \le N 2^{\alpha n}  \lambda^2  \|u\|_{L_{\infty} C^{\alpha/3, \alpha}_{x, v} (Q_{  r_{n+1} }) }.
 \end{align*}

We now denote
$$
         A_n =  \|D^2_v u\|_{ \kC^{\alpha} (Q_{r_n}) },
    \quad  B_n = \sup_{t, v \in (-r_n^2, 0) \times B_{r_n}} \|u (t, \cdot, v)\|_{ C^{(2+\alpha)/3} (Q_{r_n}),}
$$
$$
    C_n = \|D_v u\|_{\kC^{\alpha} (Q_{ r_{n}})}
$$
and we set
 \begin{equation}
                    \label{eq1.7.6}
    \lambda = 2^{ \beta n}  \lambda_1 (d, \alpha, K, L, \delta, r) \ge \lambda_0,
  \end{equation}
 where $\beta  > 2$
 and $\lambda_1 > 1$ will  be determined later.
Note that $4\beta>\max\{\alpha\beta+3+\alpha,\alpha+(2+\alpha)\beta\}$.
Combining \eqref{eq1.7.4} - \eqref{eq1.7.6}  gives
\begin{align}
                \label{eq1.7.8}
&   [u]_{ \kC^{\alpha} (Q_{ r}) } + \|\partial_t u - v \cdot D_x u\|_{L_{\infty} C^{\alpha/3, \alpha}_{x, v} (Q_{ r}) }   + A_n+ B_n + \lambda_1  2^{\beta n}  C_n\\
 &\le N \delta^{-\theta} \big(\lambda_1^{\alpha} 2^{(\alpha + \alpha \beta) n} \|f\|_{L_{\infty} C^{\alpha/3, \alpha}_{x, v} (Q_{R}) }
  + \lambda_{ 1 }^{2+\alpha} 2^{  4 \beta n} \|u\|_{L_{\infty} C^{\alpha/3, \alpha}_{x, v} (Q_{ r_{n+1} })}\notag\\
  &
  \quad + \lambda_{1}^{\alpha}  2^{(1+\alpha+\alpha \beta) n}  C_{n+1}\big)\notag.
 \end{align}
 By the standard interpolation inequality (see Lemma \ref{lemma B.3}) and a scaling argument,
 \begin{align*}
    \|u\|_{L_{\infty} C^{\alpha/3, \alpha}_{x, v} (Q_{ r_{n+1}}) } \le    N  \varepsilon^{2} (A_{n+1} + B_{n+1})
     +  N \varepsilon^{-\alpha} \|u\|_{ L_{\infty} (Q_{r_{n+1}}) }.
 \end{align*}

We take
 $$
    \varepsilon = \varepsilon_0 \lambda_1^{-(2+\alpha)/2 } (N \delta^{-\theta})^{-1/2} 2^{-2 \beta n },
 \quad\beta > \max\bigg\{\frac{1+\alpha}{1-\alpha}, 2\bigg\},
 $$
 where $\varepsilon_0 \in (0, 1)$ will be determined later,
 so that
 \begin{equation}
            \label{eq1.7.9}
 \begin{aligned}
  &  N \delta^{-\theta} \lambda_1^{2+\alpha} 2^{4 \beta n}  \|u\|_{L_{\infty} C^{\alpha/3, \alpha}_{x, v} (Q_{ r_{n+1}}) }\\
   & \le \varepsilon_0^2 (A_{n+1} + B_{n+1}) + N \varepsilon^{-\alpha}_0 \delta^{-\theta} \lambda_1^{(2+\alpha)(1+\alpha/2)}   2^{ (4 + 2 \alpha) \beta n} \|u\|_{ L_{\infty} (Q_R), }
 \end{aligned}
 \end{equation}
  \begin{equation}
 \label{eq1.7.7}
    \beta > 1 +\alpha + \alpha \beta.
 \end{equation}
We multiply both sides of \eqref{eq1.7.8} by $2^{- 6\beta n}$ and  sum over $n \in  \{0, 1, 2, \ldots\}$. Due to \eqref{eq1.7.9} -  \eqref{eq1.7.7}, we get
\begin{equation}
                \label{eq1.7.10}
\begin{aligned}
&  [u]_{ \kC^{\alpha} (Q_{r}) }
+ \|\partial_t u - v \cdot D_x u\|_{L_{\infty} C^{\alpha/3, \alpha}_{x, v} (Q_{ r}) } \\
& \quad + \sum_{n = 0}^{\infty} 2^{- 6\beta  n}  (A_n + B_n)   + \lambda_1 \sum_{n = 0}^{\infty} 2^{- 5 \beta n}  C_n  \\
&\le N  \delta^{-\theta} \big( \lambda_1^{\alpha}\|f\|_{L_{\infty} C^{\alpha/3, \alpha}_{x, v} (Q_{R}) }
  +  \varepsilon^{-\alpha}_0 \lambda_1^{ (2+\alpha)(1+\alpha/2)}\|u\|_{L_{\infty} C^{\alpha/3, \alpha}_{x, v} (Q_R)}\big)\\
  &
  \quad + \varepsilon^2_0 2^{6 \beta}  \sum_{n = 1}^{\infty} 2^{- 6 \beta n}
  (A_{n} + B_n) + N \delta^{-\theta}  2^{ 5 \beta}   \lambda_1^{\alpha} \sum_{n = 1}^{\infty} 2^{ - 5 \beta n} C_{n}.
 \end{aligned}
\end{equation}
  Taking $\lambda_1  > 1$ large   so that
 $$
    \lambda_1 -  2^{  5 \beta} N \delta^{-\theta} \lambda_1^{\alpha}  > \lambda_1/2\quad \text{and} \quad \varepsilon_0 = 2^{-3 \beta - 1},
 $$
 we may drop the last two terms on the right-hand side of \eqref{eq1.7.10}. The assertion is proved.
\end{proof}


\appendix
\section{\texorpdfstring{$S_2$}{S2} regularity results  for the KFP equations}

\begin{definition}
            \label{definition A.5}
We say that $u \in S_2 (\bR^{1+2d}_T)$ is a solution to \eqref{1.1} if the identity
\begin{equation}
            \label{eqA.5.1}
    \partial_t u - v \cdot D_x u = a^{i j} D_{v_i v_j} u - b \cdot D_v u - (c + \lambda^2) u
\end{equation}
holds in $L_2 (\bR^{1+2d}_T)$.
Furthermore, for finite $S < T$,  $u \in  S_2 ((S, T) \times \bR^{2d})$ is a solution to the Cauchy problem
\eqref{eq1.5.4} if \eqref{eqA.5.1} holds in $L_2 ((S, T) \times \bR^{2d})$ with $\lambda = 0$, and
there exists $U \in S_2 (\bR^{1+2d}_T)$ such that
$U \equiv u$ on $(S, T) \times \bR^{2d}$,  $U \equiv 0$ on $(-\infty, T) \times \bR^{2d}$.
\end{definition}

\begin{theorem}[see Theorem 2.6 of \cite{DY_21a}]
			\label{theorem A.4}
Let $\alpha \in (0, 1]$, $a$ be a function satisfying Assumptions \ref{assumption 1.1} - \ref{assumption 1.2} and $b, c \in L_{\infty} (\bR^{1+2d}_T)$. Then, there  exists  $\lambda_0 > 1$  as in \eqref{eq1.5.0}
 such that for any $\lambda \ge \lambda_0$ and
$f \in  L_2 (\bR^{1+2d}_T)$,
Eq. \eqref{1.1} has a unique solution $u \in S_2 (\bR^{1+2d}_T)$.
\end{theorem}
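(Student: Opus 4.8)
The plan is to establish this by the classical combination of an a priori $S_2$ estimate with the method of continuity; the only genuinely kinetic ingredient is the hypoelliptic maximal regularity bound for the model operator with coefficients depending on $t$ alone (this is the route of \cite{DY_21a}).

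\textit{Step 1 (model operator).} First I would prove, for $P_0 = \partial_t - v\cdot D_x - a^{ij}(t)D_{v_iv_j}$ with $a = a(t)$ merely measurable and satisfying Assumption \ref{assumption 1.1}, the a priori bound
\begin{align*}
&\lambda^2\|u\|_{L_2(\bR^{1+2d}_T)} + \lambda\|D_v u\|_{L_2(\bR^{1+2d}_T)} + \|D^2_v u\|_{L_2(\bR^{1+2d}_T)} + \|(-\Delta_x)^{1/3}u\|_{L_2(\bR^{1+2d}_T)}\\
&\qquad + \|\partial_t u - v\cdot D_x u\|_{L_2(\bR^{1+2d}_T)} \le N\delta^{-\theta}\|(P_0+\lambda^2)u\|_{L_2(\bR^{1+2d}_T)}.
\end{align*}
This is a statement about a Fourier multiplier with a time-dependent symbol: passing to the Fourier transform $w(t,\xi,\eta)$ in $(x,v)$ turns the equation into the linear transport ODE $\partial_t w + \xi\cdot\nabla_\eta w + (a^{ij}(t)\eta_i\eta_j + \lambda^2)w = \widehat{f}$, which is integrated along the characteristics $s\mapsto \eta - (t-s)\xi$. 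The dissipation $a^{ij}(s)\,\eta_i(s)\,\eta_j(s)\ge\delta|\eta - (t-s)\xi|^2$ together with the elementary hypoelliptic lower bound $\int_0^\tau|\eta - s\xi|^2\,ds \gtrsim \tau^3|\xi|^2$ (optimized in $\tau$) produces the gain of $1/3$ derivative in $x$ and the full gain of two derivatives in $v$, with the stated power of $\delta$. Plancherel then yields the displayed inequality, and the same representation gives existence for $P_0 + \lambda^2$.

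\textit{Step 2 (variable coefficients, a priori estimate, uniqueness).} Using Assumption \ref{assumption 1.2}, freeze the leading coefficients: cover $\bR^{1+2d}_T$ by kinetic cylinders of a small radius $r_0$ on which the oscillation of $a$ is small, localize $u$ by a partition of unity $\{\phi_n\}$ adapted to the kinetic dilations, apply Step 1 to $\phi_n u$ with $a$ replaced by its value at the center of the $n$-th cylinder, and sum over $n$. The oscillation error $(a^{ij}-\overline a^{ij})D_{v_iv_j}u$ is absorbed by the smallness of $r_0$; the localization commutators $u\,P_0\phi_n$ and $(aD_v\phi_n)\cdot D_v u$ contribute only lower-order norms of $u$ with an $r_0$-dependent constant, and, together with the terms $b\cdot D_v u + cu$ (here $b,c$ need only be bounded, by the interpolation $\|D_v u\|_{L_2}\le\varepsilon\|D^2_v u\|_{L_2}+\varepsilon^{-1}\|u\|_{L_2}$), these are absorbed into the left-hand side once $\lambda\ge\lambda_0(d,\delta,K)$ is large; this is exactly the bookkeeping in the proofs of Lemma \ref{lemma 4.1} and Lemma \ref{lemma 4.2}. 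The resulting $S_2$ a priori estimate gives uniqueness at once. Existence then follows by the method of continuity along $P_\sigma := \partial_t - v\cdot D_x - \sigma a^{ij}(z)D_{v_iv_j} - (1-\sigma)\Delta_v + \sigma(b\cdot D_v + c) + \lambda^2$, $\sigma\in[0,1]$: at $\sigma=0$ solvability follows from Step 1, the a priori estimate of Step 2 holds uniformly in $\sigma$ (every $P_\sigma$ satisfies Assumptions \ref{assumption 1.1}--\ref{assumption 1.2} with the same constants), so the set of $\sigma$ for which $P_\sigma+\lambda^2$ maps $S_2(\bR^{1+2d}_T)$ onto $L_2(\bR^{1+2d}_T)$ is open and closed, hence all of $[0,1]$. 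For finite $T$ one reduces to $T=\infty$ by extending $f$ by zero, or equivalently inserts an exponential weight $e^{\gamma t}$.

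\textit{Main obstacle.} The one delicate point is Step 1: because $a$ is only measurable in $t$, freezing in time is unavailable, so the maximal $L_2$ regularity $\|D^2_v u\|_{L_2}\lesssim\|P_0 u\|_{L_2}$ must be extracted directly from the characteristic/Fourier representation, and carefully tracking the dependence on the ellipticity constant there is precisely what produces the factor $\delta^{-\theta}$ (and, after the perturbation in Step 2, the large threshold $\lambda_0$).
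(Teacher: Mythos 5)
Your overall architecture is correct and, as far as one can tell from the reference, is essentially the route taken in \cite{DY_21a}: the $L_2$ maximal regularity for the model operator with coefficients $a=a(t)$ comes from the Fourier-side transport ODE in $(\xi,\eta)$ and the hypoelliptic quadratic lower bound $\int_0^\tau|\eta-\sigma\xi|^2\,d\sigma\gtrsim \tau|\eta|^2+\tau^3|\xi|^2$ (this is the content of Theorem~\ref{theorem A.1}, where the constant is cleanly $\delta^{-1}$), and the variable-coefficient result is then obtained by freezing, a kinetic partition of unity, and the method of continuity; your observations that $b,c$ need only be bounded and that the homotopy $P_\sigma$ preserves Assumptions~\ref{assumption 1.1}--\ref{assumption 1.2} are all correct.

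There is, however, one substantive imprecision in Step~2 that a careful implementation must repair. You ``apply Step~1 with $a$ replaced by its value at the center of the $n$-th cylinder'', which reads as freezing $a$ to a constant $a(t_n,x_n,v_n)$. This cannot work here for two reasons. First, since $a$ is merely measurable in $t$, the oscillation $|a(t,\cdot)-a(t_n,\cdot)|$ is not small over the time slab and cannot be absorbed; you must keep the $t$-dependence and use Step~1 for the full $t$-dependent frozen coefficient $\overline a^{ij}(t)$. Second, and more specific to the kinetic geometry, freezing at the fixed spatial point $(x_n,v_n)$ also fails: on a kinetic cylinder $Q_{r_0}(z_n)$ one has $|x-x_n+(t-t_n)v_n|<(cr_0)^3$, so $|x-x_n|$ itself can be as large as $r_0^2|v_n|+(cr_0)^3$, which is unbounded in $|v_n|$. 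The correct choice is the one used in Lemma~\ref{lemma 4.1}: $\overline a^{ij}(t)=a^{ij}\bigl(t,\,x_n-(t-t_n)v_n,\,v_n\bigr)$, i.e.\ freeze $(x,v)$ along the characteristic through the cylinder's center, leaving $t$ free. With that choice the $(x,v)$-Hölder seminorm of $a$ from Assumption~\ref{assumption 1.2} does give a small oscillation $\lesssim K\delta^{-1}r_0^\alpha$ on $Q_{r_0}(z_n)$, and the rest of your absorption and continuity argument goes through as written. So: right blueprint, but replace ``value at the center'' by ``$(x,v)$-freezing along the kinetic characteristic, retaining $t$-dependence,'' exactly as in the main text's proof of Lemma~\ref{lemma 4.1}.
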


\begin{theorem}[see Theorem 4.1 of \cite{DY_21a}]
	\label{theorem A.1}
Let $a = a^{i j} (t)$ be a function satisfying Assumption \ref{assumption 1.1} and recall the notation \eqref{eq3.0}.
Then, the following assertions hold.

$(i)$ For any $\lambda \ge 0$ and $u \in S_2 (\bR^{1+2d}_T)$,
\begin{equation*}
    \begin{aligned}
&	\lambda^2 \|u \| + \lambda  \|D_v u\|+\| D^2_v u \| +  \|(- \Delta_x)^{1/3} u \|
 + \|D_v (-\Delta_x)^{1/6} u\| \\
 &\leq  \delta^{-1} \| P_0  u + \lambda^2 u\|,
\end{aligned}	
\end{equation*}
where $\|\cdot\|=\|\cdot\|_{ L_2  (\bR^{1+2d}_T) }$.
Furthermore, for any $\lambda \ne 0$, the equation
$$
    (P_0  + \lambda^2) u = f
$$
has a unique solution $u \in S_2 (\bR^{1+2d}_T)$.

$(iii)$
For any finite numbers $S < T$ and $f \in L_2 ((S, T) \times \bR^{2d})$,
the Cauchy problem \eqref{eq1.5.4} with $P = P_0$, $b \equiv 0$, and $c \equiv 0$ has a unique solution
$u \in
S_2((S, T) \times \bR^{2d})$.
In addition,
\begin{align*}
&    \||u|+ |D_v u|+|D^2_v u|+|(-\Delta_x)^{1/3}  u| + |D_v (-\Delta_x)^{1/6}  u|+|\partial_t u - v \cdot D_x u|\|\\
&  \leq N (d, T-S) \delta^{-1} \|f\|,
\end{align*}
where $\|\cdot\| = \|\cdot\|_{ L_2 ((S, T)\times \bR^{2d}) }$.
\end{theorem}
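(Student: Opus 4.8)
This is Theorem~4.1 of \cite{DY_21a}; we only describe the plan. The idea is to peel off, by a partial Fourier transform, the degeneracy of $P_0$ and reduce everything to a one‑dimensional transport ODE along the characteristics of $\partial_t-v\cdot D_x$, after which all the claimed bounds become Fourier‑multiplier estimates. First I would reduce to smooth, rapidly decaying $u$ and $f$ by approximation. The lowest‑order part of the estimate comes from the energy identity: pairing $P_0u+\lambda^2u=f$ with $u$ in $L_2(\bR^{1+2d}_T)$ and integrating by parts, one uses that the vector field $(1,-v,0)$ is divergence free in $(x,v)$ and that $u(t,\cdot)\to 0$ as $t\to-\infty$ to get $\int_{\bR^{1+2d}_T}(\partial_t-v\cdot D_x)u\cdot u\,dz\ge 0$, while the second‑order term contributes $\int a^{ij}D_{v_i}u\,D_{v_j}u\,dz\ge\delta\|D_vu\|^2$; hence $\delta\|D_vu\|^2+\lambda^2\|u\|^2\le\|f\|\,\|u\|$, which yields the $\lambda^2\|u\|$ and $\lambda\|D_vu\|$ parts.

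Next I would take the Fourier transform in $x$ and then in $v$; writing $\tilde u(t,\xi,\eta)$ for the result and $m(t,\eta):=a^{ij}(t)\eta_i\eta_j+\lambda^2$, the equation becomes $\partial_t\tilde u-\xi\cdot\nabla_\eta\tilde u+m(t,\eta)\tilde u=\tilde f$, a transport equation along the lines $s\mapsto\eta+(t-s)\xi$, so that, using the decay of $u$ as $t\to-\infty$,
\begin{equation*}
\tilde u(t,\xi,\eta)=\int_{-\infty}^{t}\exp\Big(-\int_{s}^{t}m\big(\tau,\eta+(t-\tau)\xi\big)\,d\tau\Big)\,\tilde f\big(s,\xi,\eta+(t-s)\xi\big)\,ds .
\end{equation*}
The remaining bounds follow by multiplying this identity by the symbols $\eta_i\eta_j$, $|\xi|^{2/3}$, and $|\xi|^{1/3}\eta$ (for $D^2_vu$, $(-\Delta_x)^{1/3}u$, and $D_v(-\Delta_x)^{1/6}u$, respectively), using
\begin{equation*}
\int_{s}^{t}m\big(\tau,\eta+(t-\tau)\xi\big)\,d\tau\ \ge\ \delta\int_{0}^{t-s}|\eta+\sigma\xi|^2\,d\sigma+\lambda^2(t-s),
\end{equation*}
together with the quantitative lower bound for $\int_0^h|\eta+\sigma\xi|^2\,d\sigma$ that encodes the hypoellipticity of $P_0$, then applying Plancherel's theorem and a Schur/Young‑type bound on the resulting kernel in $(t,\xi,\eta)$. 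Tracking constants through this produces the factor $\delta^{-1}$. I expect this step — establishing the quantitative lower bound on the exponent and controlling the convolution kernels uniformly — to be the main obstacle; the rest is bookkeeping.

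For the solvability statement in $(i)$ with $\lambda\ne0$, the formula above already defines a solution $u\in S_2(\bR^{1+2d}_T)$ of $(P_0+\lambda^2)u=f$ for $f\in L_2$; alternatively one runs the method of continuity from the model case $a^{ij}=\delta_{ij}$ using the a priori estimate just obtained. Uniqueness is immediate from $\lambda^2\|u\|\le\|f\|$ applied with $f\equiv0$.

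Finally, for the Cauchy problem in $(iii)$ on a finite interval $(S,T)$ one cannot exploit the $\lambda^2$‑damping, so instead I would solve $(P_0+n^{-2})u_n=f\,1_{t>S}$ on $\bR^{1+2d}_T$ by the previous step, check $u_n\equiv0$ for $t<S$ from the energy identity, and pass to the limit $n\to\infty$; integrating the now undamped energy identity over $(S,T)$ and a Gronwall argument produce the bound with a constant $N(d,T-S)$, and the remaining terms are controlled exactly as in the a priori estimate on $(S,T)$. The estimate for the transport term is then read off the equation: $\partial_t u-v\cdot D_x u=a^{ij}D_{v_iv_j}u-f$, together with $|a|\le\delta^{-1}$ and the bound for $D^2_vu$.
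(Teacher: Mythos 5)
The theorem you are asked about is imported: the paper states it with the bracket \emph{see Theorem 4.1 of \cite{DY_21a}} and gives no proof, so there is no in-paper argument to compare against. What you propose --- Fourier transform in $x$ and $v$, Duhamel along the characteristics of $\partial_t-\xi\cdot\nabla_\eta$, quantitative lower bound on $\int_0^h|\eta+\sigma\xi|^2\,d\sigma$ encoding hypoellipticity, Schur/Young for the resulting kernel, together with the $L_2$ energy pairing for the zeroth- and first-order $v$-derivatives --- is precisely the kind of kernel-free $L_2$ argument that \cite{DY_21a} runs as the base case of its $L_p$ theory, so the route is the expected one.

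A few comments on the sketch itself. The energy pairing actually gives $\lambda^2\|u\|\le\|f\|$ and $\lambda\|D_vu\|\le\delta^{-1/2}\|f\|$, both at least as good as claimed, so that part is fine. The Duhamel formula is correct, and the key fact behind your ``quantitative lower bound'' is $\int_0^h|\eta+\sigma\xi|^2\,d\sigma\ge \tfrac{1}{12}h^3|\xi|^2$ (by completing the square). You are right to flag the Schur step as the main obstacle. One of the two Schur integrals, namely $\int_s^\infty|\eta'-(t-s)\xi|^2\,e^{-\delta\int_0^{t-s}|\eta'-\sigma\xi|^2\,d\sigma}\,dt$, is an exact derivative and evaluates to at most $\delta^{-1}$; the companion integral $\int_0^\infty|\eta|^2\,e^{-\delta\int_0^h|\eta+\sigma\xi|^2\,d\sigma}\,dh$ requires splitting at the time the damping kicks in, and this is where the elongated-in-$x$ (factor $\delta^{-2}$) cylinders that show up elsewhere in the paper come from. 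Also, be careful that one genuinely needs the representation formula here: the naive energy argument of pairing with $\overline{m\tilde u}$, $m=a^{ij}(t)\eta_i\eta_j$, fails because $a$ is only measurable in $t$ and the would-be $\partial_t m$ term is uncontrolled --- which is exactly why the Duhamel/Schur route is not optional. Finally, for $(iii)$ the cleaner route is the usual exponential-weight substitution $u\mapsto e^{\lambda^2 t}u$ reducing to $(i)$; your $n^{-2}$-penalization plus Gronwall also works but needs a compactness step at $n\to\infty$ that the weight argument avoids.
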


\begin{corollary}
\label{corollary A.4}
For any $u \in S_2 (\bR^{1+2d}_T)$, we have $(-\Delta_x)^{1/3} u \in L_2 (\bR^{1+2d}_T)$.
\end{corollary}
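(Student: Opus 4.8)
The plan is to deduce the claim directly from the a priori $L_2$ estimate of Theorem~\ref{theorem A.1}~$(i)$, applied with the constant coefficients $a^{ij}(t)=\delta_{ij}$ (for which Assumption~\ref{assumption 1.1} holds with some ellipticity constant we need not track) and $\lambda=0$, after a mollification in the $x$ variable that replaces $u$ by a function to which that estimate applies with every quantity involved being a genuine $L_2$ function.

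First I would fix a standard mollifier $\rho\in C^\infty_0(\bR^d)$, set $\rho_\varepsilon(x)=\varepsilon^{-d}\rho(x/\varepsilon)$, and define
$$
u_\varepsilon(t,x,v)=\int_{\bR^d}u(t,x-y,v)\,\rho_\varepsilon(y)\,dy
$$
(here and below $*_x$ denotes convolution in the $x$ variable). Since $*_x$ commutes with $D_v$, $D_v^2$ and with the transport operator $\partial_t-v\cdot D_x$ (the velocity $v$ acting as a frozen parameter in the $x$-convolution), one has $(\partial_t-v\cdot D_x)u_\varepsilon=\big((\partial_t-v\cdot D_x)u\big)*_x\rho_\varepsilon$, $D_vu_\varepsilon=(D_vu)*_x\rho_\varepsilon$, $D_v^2u_\varepsilon=(D_v^2u)*_x\rho_\varepsilon$, so by Young's inequality $u_\varepsilon\in S_2(\bR^{1+2d}_T)$ with norm controlled by that of $u$. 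Moreover $\Delta_xu_\varepsilon=u*_x\Delta_x\rho_\varepsilon\in L_2(\bR^{1+2d}_T)$, and since $|\xi|^{2/3}\le 1+|\xi|^2$, Plancherel's theorem in $x$ gives $\|(-\Delta_x)^{1/3}u_\varepsilon\|_{L_2}\le\|u_\varepsilon\|_{L_2}+\|\Delta_xu_\varepsilon\|_{L_2}<\infty$; thus every term occurring in Theorem~\ref{theorem A.1}~$(i)$ is finite for $u_\varepsilon$.

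Next I would apply Theorem~\ref{theorem A.1}~$(i)$ with $a^{ij}=\delta_{ij}$, $\lambda=0$ and $P_0=\partial_t-v\cdot D_x-\Delta_v$ to $u_\varepsilon$. Using $P_0u_\varepsilon=(P_0u)*_x\rho_\varepsilon$ and Young's inequality once more, this yields
$$
\|(-\Delta_x)^{1/3}u_\varepsilon\|_{L_2(\bR^{1+2d}_T)}\le N\,\|(P_0u)*_x\rho_\varepsilon\|_{L_2(\bR^{1+2d}_T)}\le N\,\|P_0u\|_{L_2(\bR^{1+2d}_T)},
$$
and the right-hand side is finite and independent of $\varepsilon$ because $\partial_tu-v\cdot D_xu$ and $\Delta_vu$ (the trace of $D_v^2u$) lie in $L_2(\bR^{1+2d}_T)$ by the definition of $S_2(\bR^{1+2d}_T)$. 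Finally I would let $\varepsilon\to0$: since $u_\varepsilon\to u$ in $L_2(\bR^{1+2d}_T)$, we get $(-\Delta_x)^{1/3}u_\varepsilon\to(-\Delta_x)^{1/3}u$ in the sense of distributions (test against $\phi\in C^\infty_0(\bR^{1+2d}_T)$ and move $(-\Delta_x)^{1/3}$ onto $\phi$, which is legitimate since $(-\Delta_x)^{1/3}\phi\in L_2(\bR^{1+2d}_T)$), while the displayed bound shows $\{(-\Delta_x)^{1/3}u_\varepsilon\}$ is bounded in $L_2(\bR^{1+2d}_T)$; extracting a weakly convergent subsequence identifies the distributional limit with an $L_2$ function and yields $(-\Delta_x)^{1/3}u\in L_2(\bR^{1+2d}_T)$ together with $\|(-\Delta_x)^{1/3}u\|_{L_2(\bR^{1+2d}_T)}\le N\|P_0u\|_{L_2(\bR^{1+2d}_T)}$. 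The only point demanding care is precisely the passage from the a priori estimate, which presupposes $(-\Delta_x)^{1/3}u\in L_2$, to the given $u$ itself; this is exactly what the $x$-mollification together with the weak-compactness argument circumvents, and everything else is routine.
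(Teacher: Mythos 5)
Your proposal is correct, and it shares the key observation with the paper's proof: reduce to the constant-coefficient operator $P_0=\partial_t-v\cdot D_xu-\Delta_v$ with $a^{ij}=\delta_{ij}$, note that $P_0u\in L_2(\bR^{1+2d}_T)$ by the definition of $S_2$, and invoke Theorem~\ref{theorem A.1}~$(i)$. The paper's proof is one line: it applies Theorem~\ref{theorem A.1}~$(i)$ directly to $u$, treating that theorem's estimate as valid for every $u\in S_2(\bR^{1+2d}_T)$ with all left-hand-side quantities finite --- and indeed that is how the theorem is stated ("For any $\lambda\ge 0$ and $u\in S_2(\bR^{1+2d}_T)$, \dots"), so no extra work is required.

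Where you differ is that you do not take the finiteness of $\|(-\Delta_x)^{1/3}u\|_{L_2}$ for granted from the statement of Theorem~\ref{theorem A.1}~$(i)$; instead you mollify in $x$ (so that $(-\Delta_x)^{1/3}u_\varepsilon\in L_2$ is manifest via $|\xi|^{2/3}\le 1+|\xi|^2$ and Plancherel), apply the estimate to $u_\varepsilon$ with a uniform bound coming from Young's inequality and $P_0u_\varepsilon=(P_0u)*_x\rho_\varepsilon$, and pass to the limit by weak compactness and a distributional identification. This is a genuinely more cautious argument that reproves, from a weaker reading of Theorem~\ref{theorem A.1}~$(i)$ as a conditional a priori inequality, the very finiteness assertion that the paper reads off directly. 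Both routes are sound; yours is longer but more self-contained and would be the safer choice if the quoted theorem had been stated only for functions known in advance to satisfy $(-\Delta_x)^{1/3}u\in L_2$. One minor stylistic point: the constant you end up with is $N=\delta^{-1}$ with $\delta$ the ellipticity constant for $a^{ij}=\delta_{ij}$, so the dependence you "need not track" is in fact harmless, as you imply.
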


\begin{proof}
Let $f = \partial_t u - v \cdot D_x u - \Delta_v u \in L_2 (\bR^{1+2d}_T)$. Applying Theorem \ref{theorem A.1} with $a^{i j}  \equiv \delta_{i j}$, we prove the desired assertion.
\end{proof}

\begin{corollary}
                \label{corollary A.2}
Invoke the assumption of Theorem \ref{theorem A.4}  and assume, additionally, that
$$
    D^n_v D^m_x h \in C_b (\overline{\bR^{1+2d}_T}), \quad \forall n, m \ge 0, \quad h = a, b, c, f,
$$
and $D^n_v D^m_x f \in L_2 (\bR^{1+2d}_T),\,\, \forall n, m \ge 0$.
Then,  $D^n_v D^m_x u \in  C_b (\overline{\bR^{1+2d}_T}) \cap L_2 (\bR^{1+2d}_T)$ for  $n, m \ge 0$.
\end{corollary}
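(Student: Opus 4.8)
The plan is to bootstrap regularity from the $S_2$ theory. By Theorem \ref{theorem A.4}, for $\lambda\ge\lambda_0$ the function $u$ is \emph{the} $S_2(\bR^{1+2d}_T)$ solution of \eqref{1.1}, and I will use the accompanying a priori bound $\|u\|_{S_2(\bR^{1+2d}_T)}\le N\|f\|_{L_2(\bR^{1+2d}_T)}$ from that theory (cf.\ Theorem 2.6 of \cite{DY_21a}) --- in fact with the same $N$ for the translated coefficients $a(\cdot+h)$, which satisfy Assumptions \ref{assumption 1.1}--\ref{assumption 1.2} with the same $\delta,K$. The argument splits into two parts: (A) show that every $(x,v)$-spatial derivative $D_v^nD_x^m u$ again lies in $S_2(\bR^{1+2d}_T)$; (B) upgrade the resulting global $L_2$ bounds to global $C_b$ bounds by using the equation to recover one temporal derivative and invoking an anisotropic Sobolev embedding, the uniformity coming from the translation invariance recorded in Lemma \ref{lemma 2.0}.

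\textbf{Step A.} I would prove by a double induction on $(|\gamma|,|\beta|)$, ordered lexicographically, that $D_v^\gamma D_x^\beta u\in S_2(\bR^{1+2d}_T)$ for all multi-indices $\gamma,\beta$. Since $D_x$ commutes with $\partial_t-v\cdot D_x$ and $[D_{v_k},\partial_t-v\cdot D_x]=-D_{x_k}$, differentiating \eqref{1.1} shows $D_v^\gamma D_x^\beta u$ formally solves an equation of type \eqref{1.1} whose right-hand side consists of $D_v^\gamma D_x^\beta f$, commutator terms $D_v^{\gamma'}D_x^{\beta'}u$ with $|\gamma'|<|\gamma|$, $|\gamma'|+|\beta'|\le|\gamma|+|\beta|$, and Leibniz terms $(D_v^{\mu}D_x^{\nu}a^{ij})D_{v_iv_j}D_v^{\gamma-\mu}D_x^{\beta-\nu}u$ (and analogues with $b,c$) with $|\mu|+|\nu|\ge1$. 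The lexicographic order is what makes this close: in each Leibniz term the factor $D_{v_iv_j}D_v^{\gamma-\mu}D_x^{\beta-\nu}u$ is $D^2_v$ of a derivative with either $|\gamma-\mu|<|\gamma|$ or $|\gamma-\mu|=|\gamma|$ and $|\beta-\nu|<|\beta|$, hence in $L_2$ by the induction hypothesis; since $D_v^{\mu}D_x^{\nu}a^{ij}$, $D_v^{\mu}D_x^{\nu}b$, $D_v^{\mu}D_x^{\nu}c$ are bounded and $D_v^\gamma D_x^\beta f\in L_2$, the whole right-hand side lies in $L_2(\bR^{1+2d}_T)$. To identify $D_v^\gamma D_x^\beta u$ as a genuine $S_2$ solution I would take a finite-difference quotient in the last differentiated direction of a lower-order derivative already in $S_2$; the quotient does not commute with $a^{ij}(z)D^2_v$ (nor, for a $v$-increment, with $v\cdot D_x$), but the commutators are bounded multiples of translates of already controlled derivatives of $u$, hence can be moved to the right-hand side with an $L_2$ bound uniform in the increment; the a priori estimate then bounds the difference quotients uniformly in $S_2(\bR^{1+2d}_T)$, and weak compactness of this Hilbert space together with $\mathcal D'$-convergence of difference quotients yields $D_v^\gamma D_x^\beta u\in S_2(\bR^{1+2d}_T)$. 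In particular $D_v^nD_x^m u\in L_2(\bR^{1+2d}_T)$ for all $n,m$.

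\textbf{Step B.} Rewrite \eqref{1.1} as $\partial_t u-v\cdot D_x u=\Delta_v u+G$ with $G:=(a^{ij}-\delta_{ij})D_{v_iv_j}u-b\cdot D_v u-(c+\lambda^2)u+f$; by Step A and the hypothesis $D_v^nD_x^m(a,b,c,f)\in C_b(\overline{\bR^{1+2d}_T})$, every $(x,v)$-derivative $D_v^nD_x^mG$ lies in $L_2(\bR^{1+2d}_T)$. Fix $z_0\in\bR^{1+2d}_T$ and apply Lemma \ref{lemma 2.0} with $r=1$: the translation $z\mapsto\widetilde z$ leaves $\partial_t-v\cdot D_x$, $D_x$, $D_v$ and Lebesgue measure invariant, so $\bar u(z):=u(\widetilde z)$ satisfies $\partial_t\bar u=v\cdot D_x\bar u+\Delta_v\bar u+\bar G$ on the fixed unit box, on which $|v|<1$. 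Hence, with a cutoff $\phi\in C_0^\infty(B_1\times B_1)$ equal to $1$ near the origin, $w:=\phi\,D_v^nD_x^m\bar u$ satisfies $w\in L_2((-1,1);H^{k}(\bR^{2d}))$ and $\partial_t w\in L_2((-1,1);H^{k-2}(\bR^{2d}))$ with norms controlled by the global $L_2$ norms from Step A, hence \emph{independently of $z_0$} (it is the restriction $|v|<1$ that tames the transport coefficient). By the Lions--Magenes trace theorem $w\in C([-1,1];H^{k-1}(\bR^{2d}))$, and for $k>d+1$, after one more interpolation, $D_v^nD_x^m u$ is bounded and H\"older continuous on a neighbourhood of $z_0$ with bounds and modulus uniform in $z_0$; letting $z_0$ vary gives $D_v^nD_x^m u\in C_b(\overline{\bR^{1+2d}_T})\cap L_2(\bR^{1+2d}_T)$.

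\textbf{The main obstacle} is Step A: one must check that the lexicographic induction genuinely closes --- that, after differentiating, the right-hand side involves only $v$- and $(v,x)$-orders already settled --- and one must carry out the difference-quotient argument carefully, since the quotient fails to commute with the $x$-dependent principal part, so the non-commutation terms have to be tracked and seen to be uniformly $L_2$-bounded. In Step B the only delicate point is the uniformity over $z_0$: the transport coefficient $v$ is unbounded, and it is precisely Lemma \ref{lemma 2.0} (moving any base point into $\{|v|<1\}$) that restores uniformity; note also that the coefficients are merely measurable in $t$, so at most one temporal derivative can be recovered, which is why the embedding must be applied to $\partial_t-v\cdot D_x$ rather than $\partial_t$.
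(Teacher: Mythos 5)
Your Step A is essentially the paper's own Step A: both bootstrap $D_v^\gamma D_x^\beta u\in S_2(\bR^{1+2d}_T)$ by an induction that exploits the commutator $[D_{v_k},\,v\cdot D_x]=D_{x_k}$, made rigorous by finite-difference quotients. The lexicographic ordering you spell out is exactly what justifies the paper's terse pointer to ``an induction argument similar to that used in the proof of Lemma~\ref{lemma 3.6}'': differentiating in $v$ produces a pure-$x$ derivative on the right, which has strictly lower lexicographic rank.

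Your Step B, however, is a genuinely different (and correct) route. The paper dispatches the passage from $L_2$ to $C_b$ with a single \emph{global energy estimate}: it tests the equation for $U=D_v^\alpha D_x^\beta u$ against $U$ over $\bR^{1+2d}_s$, notices that $\int_{\bR^{2d}} v\cdot D_x|U|^2\,dx\,dv=0$ after integrating by parts in $x$ (so the unbounded transport coefficient simply drops out, with no cutoff and no translation), and deduces $\sup_s\|U(s,\cdot,\cdot)\|_{L_2(\bR^{2d})}<\infty$; applying this for all $\alpha,\beta$ and invoking Sobolev embedding on $\bR^{2d}$ finishes. You instead localize: a compactly supported cutoff in $(x,v)$, a translation by Lemma~\ref{lemma 2.0} to force $|v|<1$ on the support, the Lions--Magenes trace theorem to land in $C_tH^{k-1}_{x,v}$, and then Sobolev. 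Both reach the same endpoint. The paper's energy argument is shorter precisely because the cancellation of $\int v\cdot D_x|U|^2$ removes any need to tame $v$ by translating; your version has the minor virtue of treating continuity in $t$ more explicitly, which the paper's closing line (``An application of the Sobolev embedding theorem finishes the proof'') leaves to the reader. One small caveat in your Step B: near $t_0\to T$ the translated time interval $(-1,\min(1,T-t_0))$ shrinks, so the trace theorem should be applied on a one-sided interval, but this does not affect the conclusion.
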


\begin{proof}
To make the argument presented below rigorous, one needs to use the method of finite-difference quotients.
By using an induction argument similar to that used in the proof of Lemma \ref{lemma 3.6}, one can show that for any multi-indexes $\alpha$ and $\beta$, and $U = D_v^{\alpha} D_x^{\beta} u \in S_2 (\bR^{1+2d}_T)$, so that
$$
    (P + b \cdot D_v + c + \lambda^2) U  =: F \in L_2 (\bR^{1+2d}_T).
$$
We multiply the above identity by $U$, integrate over $\bR^{1+2d}_s$, and note that the term containing $v \cdot D_x |U|^2$ vanishes. We conclude that
$$
    \int_{\bR^{2d}} U^2 (s, x, v) \, dx dv < \infty \quad \text{a.e.} \, \, s \in (-\infty, T).
$$
An application of the Sobolev embedding theorem finishes the proof of this assertion.
\end{proof}

\begin{lemma}[Interior $S_2$ estimate, see Lemma 4.5 in \cite{DY_21a}]
				\label{lemma A.3}
Let $a = a (t)$ satisfy Assumption \ref{assumption 1.1},  $\lambda \in \bR$, and $0 < r < R$ be numbers.
Then, for any $u \in S_{2,\text{loc}} (\bR^{1+2d}_0)$,
\begin{equation*}
\begin{aligned}
& \,
 	 \|\partial_t u -v \cdot D_x u\|_{ L_2 ( Q_r) } + \delta^{-2 }(r_2 - r_1)^{-1} \|D_v u\|_{ L_2 ( Q_r) }+ 	\| D^2_v u\|_{ L_2 ( Q_r) }\\
 	 &
	\leq N (d) \delta^{-1}  \|P_0 u + \lambda^2 u\|_{ L_2 ( Q_R) }+  N (d) \delta^{-4} R (R-r)^{-3}  \|u\|_{ L_2 ( Q_R) },
\end{aligned}	
\end{equation*}
where $P_0$ is defined by \eqref{eq3.0}.
\end{lemma}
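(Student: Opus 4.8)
This is the interior counterpart of the global $S_2$ bound of Theorem \ref{theorem A.1} $(i)$, and the plan is to deduce it from that bound by a localization (cutoff) argument. Since both sides depend on $\lambda$ only through $\lambda^2 \ge 0$, I may assume $\lambda \ge 0$. For radii $r \le \rho_1 < \rho_2 \le R$ I would fix a cutoff $\zeta \in C^{\infty}_0 (Q_{\rho_2})$ with $\zeta \equiv 1$ on $Q_{\rho_1}$ and, consistently with the kinetic scaling, $|D_v^j \partial_t^k D_x^l \zeta| \le N(d,j,k,l)(\rho_2-\rho_1)^{-(j+2k+3l)}$ (so $|\partial_t\zeta|\lesssim(\rho_2-\rho_1)^{-2}$, $|D_x\zeta|\lesssim(\rho_2-\rho_1)^{-3}$, $|D_v\zeta|\lesssim(\rho_2-\rho_1)^{-1}$). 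Because $u \in S_{2,\text{loc}}(\bR^{1+2d}_0)$ and $\zeta$ is compactly supported inside $\bR^{1+2d}_0$, the function $w := \zeta u$ lies in $S_2(\bR^{1+2d}_0)$, and a direct computation gives
\begin{equation}
\label{eqA3a}
(P_0 + \lambda^2)w = \zeta(P_0 u + \lambda^2 u) + u(\partial_t\zeta - v\cdot D_x\zeta - a^{ij}D_{v_iv_j}\zeta) - 2a^{ij}(D_{v_i}\zeta)(D_{v_j}u).
\end{equation}

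Next I would apply Theorem \ref{theorem A.1} $(i)$ to $w$, which bounds $\|D^2_v w\|_{L_2}$, $\lambda\|D_v w\|_{L_2}$ and $\lambda^2\|w\|_{L_2}$ (over $\bR^{1+2d}_0$) by $\delta^{-1}\|(P_0+\lambda^2)w\|_{L_2}$; the transport term is recovered from the equation itself, $\partial_t w - v\cdot D_x w = (P_0+\lambda^2)w + a^{ij}D_{v_iv_j}w - \lambda^2 w$, together with $|a|\le\delta^{-1}$. Since $w = u$, $D_v w = D_v u$, $D^2_v w = D^2_v u$ and $\partial_t w - v\cdot D_x w = \partial_t u - v\cdot D_x u$ on $Q_{\rho_1}\supset Q_r$, this controls $\|\partial_t u - v\cdot D_x u\|_{L_2(Q_{\rho_1})} + \|D^2_v u\|_{L_2(Q_{\rho_1})}$ by $N\delta^{-\theta}\|(P_0+\lambda^2)w\|_{L_2}$. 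Inserting \eqref{eqA3a}, using $|a|\le\delta^{-1}$, the bound $|v|\le\rho_2\le R$ on $\operatorname{supp}\zeta$ and $(\rho_2-\rho_1)^{-2}\le R(\rho_2-\rho_1)^{-3}$, the right-hand side becomes $\le N\delta^{-\theta}\big(\|P_0 u + \lambda^2 u\|_{L_2(Q_R)} + R(\rho_2-\rho_1)^{-3}\|u\|_{L_2(Q_R)} + (\rho_2-\rho_1)^{-1}\|D_v u\|_{L_2(Q_{\rho_2})}\big)$.

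The last term, which re-introduces $\|D_v u\|$ on the \emph{larger} cylinder, is the main obstacle: a single cutoff does not close the estimate. I would handle it with the standard interpolation inequality
\begin{equation}
\label{eqA3b}
\|D_v u\|_{L_2(Q_\rho)} \le \varepsilon\|D^2_v u\|_{L_2(Q_{\rho'})} + N(d)\varepsilon^{-1}(\rho'-\rho)^{-2}\|u\|_{L_2(Q_{\rho'})}, \qquad r \le \rho < \rho' \le R,\ \varepsilon > 0,
\end{equation}
itself proved by a cutoff and integration by parts in $v$, the Leibniz first-order term being absorbed by an elementary iteration. Running the preceding argument on a nested triple $r\le\rho_1<\rho_2<\rho_3\le R$ and choosing $\varepsilon$ in \eqref{eqA3b} of size $\sim\delta^{\theta}(\rho_3-\rho_1)$, one obtains, for $\Phi(\rho):=\|\partial_t u - v\cdot D_x u\|_{L_2(Q_\rho)} + \|D^2_v u\|_{L_2(Q_\rho)}$, an inequality $\Phi(\rho_1)\le\tfrac12\Phi(\rho_3) + N\delta^{-\theta}R(\rho_3-\rho_1)^{-3}\big(\|P_0 u + \lambda^2 u\|_{L_2(Q_R)} + \|u\|_{L_2(Q_R)}\big)$ valid for all such radii; the iteration lemma (cf. Lemma 5.13 in \cite{GM_12}) then removes $\tfrac12\Phi(\rho_3)$ and yields the claimed bound for $\|\partial_t u - v\cdot D_x u\|_{L_2(Q_r)} + \|D^2_v u\|_{L_2(Q_r)}$.

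Finally, applying \eqref{eqA3b} once more between $Q_r$ and $Q_{(r+R)/2}$, estimating $\|D^2_v u\|_{L_2(Q_{(r+R)/2})}$ by the bound just obtained, and taking $\varepsilon\sim\delta^2(R-r)$ produces the estimate for $\|D_v u\|_{L_2(Q_r)}$ with exactly the weight $\delta^{-2}(R-r)^{-1}$ on the left. Keeping track of the powers of $\delta$ and of $(R-r)^{-1}$ through \eqref{eqA3a}, \eqref{eqA3b} and the iteration gives the exponents in the statement, the remaining steps being routine; the conclusion is also contained in Lemma 4.5 of \cite{DY_21a}.
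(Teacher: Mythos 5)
The paper does not prove Lemma~\ref{lemma A.3}; it simply cites Lemma~4.5 of \cite{DY_21a}, so there is no internal proof to compare against. Your proposal is the natural localization argument and is essentially the right one: multiply by a kinetic cutoff, compute the commutator \eqref{eqA3a}, apply the global $S_2$ estimate of Theorem~\ref{theorem A.1}~$(i)$, and then close the estimate with a first/second-order $v$-interpolation and an iteration over nested cylinders to absorb the lower-order $D_v u$ term. That is correct in structure and is what the cited lemma amounts to.

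Two points deserve tightening. First, the cutoff cannot be taken in $C^{\infty}_0(Q_{\rho_2})$ as you wrote: since $Q_{\rho_2}=(-\rho_2^2,0)\times B_{\rho_2^3}\times B_{\rho_2}$ is open and does not contain $\{t=0\}$, a compactly supported $\zeta$ would vanish near $t=0$, which is incompatible with $\zeta\equiv1$ on $Q_{\rho_1}$. The cutoff should be $\equiv1$ on $Q_{\rho_1}$, supported in $[-\rho_2^2,0]\times \overline{B}_{\rho_2^3}\times\overline{B}_{\rho_2}$ (in particular nonzero at $t=0$), and vanish only for $t$ near $-\rho_2^2$ and near the lateral $x,v$-boundary; this still gives $w=\zeta u\in S_2(\bR^{1+2d}_0)$, which is all that Theorem~\ref{theorem A.1}~$(i)$ needs. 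Second, your claim to recover the exact $\delta$-exponents is optimistic: bounding the transport term via $\|a^{ij}D_{v_iv_j}w\|\le\delta^{-1}\|D_v^2 w\|\le\delta^{-2}\|(P_0+\lambda^2)w\|$ loses a factor of $\delta^{-1}$ compared to the stated $N\delta^{-1}$ on the main term, and this propagates through the iteration. To get the asserted exponents one would need the sharper (but true, and implicit in the proof of Theorem~\ref{theorem A.1}~$(i)$) bound $\|a^{ij}D_{v_iv_j}w\|\le N\|(P_0+\lambda^2)w\|$ with $N$ independent of $\delta$. In fairness, the statement you are proving appears to carry a typo ($r_1,r_2$ are never defined; they should read $R,r$), and the paper only ever uses this lemma through the generic bound $\delta^{-\theta}$, so the imprecision in your $\delta$-bookkeeping is inconsequential for the applications; but it should not be presented as giving "exactly" the stated exponents without the refined commutator bound.
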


\section{}

\begin{lemma}[Lemma 3.1 in \cite{DY_21a}]
						\label{lemma B.1}
Let $r > 0$ be  a number.
Then, the following assertions hold.

$(i)$ For any $z, z_0 \in \bR^{1+2d}$,
$$
	\rho (z, z_0) \leq 2 \rho (z_0, z).
$$

$(ii)$
For any $z, z_0, z_1 \in \bR^{1+2d}$,
$$
	\rho (z, z_0)
	\leq 2 (\rho (z, z_1) + \rho (z_1, z_0)).
$$

$(iii)$
The function $\widehat \rho$ (see \eqref{1.4}) is a (symmetric) quasi-distance.

$(iv)$ One has
	$$
			 \widehat{Q}_{r} (z_0) \subset  \tQ_{r} (z_0) \subset   \widehat{Q}_{ 3r} (z_0),
	$$
	where $\widetilde Q_r (z_0)$ and $\widehat Q_r (z_0)$ are defined in \eqref{eq1.13} and \eqref{eq1.14}, respectively.

$(v)$ For $T \in (-\infty, \infty]$,
$$
    \frac{|\widehat Q_{2r} (z_0) \cap \bR^{1+2d}_T|}{|\widehat Q_r (z_0) \cap \bR^{1+2d}_T|} \le N (d),
$$
so that the triple $(\overline{\bR^{1+2d}_T}, \widehat \rho, dz)$ (with the induced topology if $T < \infty$) is a space of homogeneous type.
\end{lemma}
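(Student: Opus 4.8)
The plan is to prove (i) and (ii) by direct computation and then deduce (iii)--(v) from them together with the explicit product structure of the cylinders. The only term of $\rho$ that is not symmetric in its two arguments is the middle one, so in both (i) and (ii) the essential point is to control the ``drift'' $x-x_0+(t-t_0)v_0$ when the base velocity is replaced by another one, using nothing more than $|(t-t_0)(v-v_0)|\le|t-t_0|\,|v-v_0|$ together with $|t-t_0|\le\rho^2$, $|v-v_0|\le\rho$.

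For (i), the terms $|t-t_0|^{1/2}$ and $|v-v_0|$ are symmetric in $z,z_0$, so it suffices to bound $|x-x_0+(t-t_0)v_0|^{1/3}$. Writing
\[
x-x_0+(t-t_0)v_0=\bigl(x-x_0+(t-t_0)v\bigr)+(t-t_0)(v_0-v)
\]
and noting that $|x-x_0+(t-t_0)v|=|x_0-x+(t_0-t)v|\le\rho(z_0,z)^3$ while $|t-t_0|\,|v-v_0|\le\rho(z_0,z)^2\cdot\rho(z_0,z)=\rho(z_0,z)^3$, I get $|x-x_0+(t-t_0)v_0|\le2\rho(z_0,z)^3$, hence $|x-x_0+(t-t_0)v_0|^{1/3}\le2^{1/3}\rho(z_0,z)\le2\rho(z_0,z)$. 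For (ii), the time and velocity terms obey the triangle inequality after taking square roots (subadditivity of $t\mapsto t^{1/2}$); for the drift term, I decompose
\[
x-x_0+(t-t_0)v_0=\bigl(x-x_1+(t-t_1)v_1\bigr)+(t-t_1)(v_0-v_1)+\bigl(x_1-x_0+(t_1-t_0)v_0\bigr),
\]
the three pieces being bounded by $\rho(z,z_1)^3$, $|t-t_1|\,|v_1-v_0|\le\rho(z,z_1)^2\rho(z_1,z_0)$, and $\rho(z_1,z_0)^3$ respectively. With $a=\rho(z,z_1)$, $b=\rho(z_1,z_0)$ this gives $|x-x_0+(t-t_0)v_0|\le a^3+a^2b+b^3\le3(a+b)^3$, so its cube root is $\le3^{1/3}(a+b)\le2(a+b)$, and (ii) follows.

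Given (i) and (ii), assertion (iii) is immediate: $\widehat\rho$ is symmetric, nonnegative, and $\widehat\rho(z,z_0)=0$ forces $t=t_0$, $v=v_0$, and then $x=x_0$; applying (ii) to each of $\rho(z,z_0)$ and $\rho(z_0,z)$ yields $\widehat\rho(z,z_0)\le2\widehat\rho(z,z_1)+2\widehat\rho(z_1,z_0)$. For (iv), unwinding the definitions shows $\widetilde Q_r(z_0)=\{z:\rho(z,z_0)<r\}$; since $\widehat\rho(z,z_0)\ge\rho(z,z_0)$ we get $\widehat Q_r(z_0)\subset\widetilde Q_r(z_0)$, while if $\rho(z,z_0)<r$ then (i) gives $\rho(z_0,z)\le2\rho(z,z_0)$, so $\widehat\rho(z,z_0)<3r$, i.e. $\widetilde Q_r(z_0)\subset\widehat Q_{3r}(z_0)$.

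Finally, for (v): by (iv) it suffices to establish the doubling inequality for the sets $\widetilde Q_\rho(z_0)\cap\bR^{1+2d}_T$, because $|\widehat Q_{2r}(z_0)\cap\bR^{1+2d}_T|\le|\widetilde Q_{2r}(z_0)\cap\bR^{1+2d}_T|$ and $|\widetilde Q_{r/3}(z_0)\cap\bR^{1+2d}_T|\le|\widehat Q_r(z_0)\cap\bR^{1+2d}_T|$. For $z_0\in\overline{\bR^{1+2d}_T}$, i.e. $t_0\le T$, Fubini's theorem gives $|\widetilde Q_\rho(z_0)\cap\bR^{1+2d}_T|=\ell(\rho)\,c_d\rho^d\,c_d\rho^{3d}$, where $\ell(\rho)\in[\rho^2,2\rho^2]$ is the length of the time slice $(t_0-\rho^2,\min\{t_0+\rho^2,T\})$ and the $v$- and $x$-slices are balls of radii $\rho$ and $\rho^3$ independent of $T$. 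Hence the ratio $|\widetilde Q_{2r}(z_0)\cap\bR^{1+2d}_T|/|\widetilde Q_{r/3}(z_0)\cap\bR^{1+2d}_T|$ is bounded by a constant depending only on $d$, which gives the doubling property and the homogeneous-type statement. I do not expect a genuine obstacle here; the argument is elementary bookkeeping. The most error-prone points are keeping the decomposition in (ii) straight (which time increment multiplies which velocity) and checking that passing to $\bR^{1+2d}_T$ only alters the time slice of $\widetilde Q_\rho(z_0)$ by a bounded factor, which is precisely where the hypothesis $t_0\le T$ enters.
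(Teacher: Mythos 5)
Your argument is correct and self-contained. The paper does not prove Lemma \ref{lemma B.1} itself but defers to Lemma 3.1 of \cite{DY_21a}, so there is no in-text proof to compare against; your elementary derivation is exactly the kind of computation one expects that reference to contain.

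A few small remarks on sharpness and bookkeeping, none of which affects correctness. In (ii) the bound $a^3+a^2b+b^3\le(a+b)^3$ already holds without the extra factor of $3$ (since $(a+b)^3=a^3+3a^2b+3ab^2+b^3$), so in fact $\rho(z,z_0)\le\rho(z,z_1)+\rho(z_1,z_0)$ with constant $1$; the stated constant $2$ is simply a safe overestimate, and your version is fine. In (v) the hypothesis $t_0\le T$ that you isolate is indeed the implicit one — it is what the ``so that'' clause about $\overline{\bR^{1+2d}_T}$ being a space of homogeneous type requires — and your Fubini computation showing $|\widetilde Q_\varrho(z_0)\cap\bR^{1+2d}_T|=\ell(\varrho)\,|B_\varrho|\,|B_{\varrho^3}|$ with $\ell(\varrho)\in[\varrho^2,2\varrho^2]$ is the right way to see that the truncation in $t$ only costs a dimensional constant. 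The reduction from $\widehat Q$ to $\widetilde Q$ via the inclusions in (iv) at radii $2r$ and $r/3$ is clean. No gaps.
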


For the proof of the following inequality see, for instance,  Lemma 6.3.1 in \cite{Kr_96}.
\begin{lemma}[Standard interpolation inequality in H\"older spaces]
            \label{lemma B.3}
Let $\Omega$ be either $\bR^d$ or a bounded domain with a smooth boundary, $u \in C^{k+\alpha} (\Omega)$, $k \in \{0, 1, \ldots\}, \alpha \in [0, 1]$ be the usual H\"older space.
Then, for any $j = 0, 1, \ldots, k$, and $\beta \in [0, 1]$ such that $j+\beta < k+\alpha$ and any $\varepsilon > 0$, one has
$$
    [D^j u]_{ C^{ \beta} ( \Omega) } \le N (\varepsilon^{k+\alpha - j-\beta} [u]_{ C^{k+\alpha} (\Omega) } +  (1+\varepsilon^{-j-\beta})  \|u\|_{ L_{\infty} ( \Omega) }).
$$
where $N = N (d, k, \alpha, j, \beta, \Omega)$. In the case when $\Omega = \bR^d$, one can replace the factor $1+\varepsilon^{-j-\beta}$ with $\varepsilon^{-j-\beta}$ on the right-hand side of the above inequality.
\end{lemma}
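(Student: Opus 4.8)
The plan is to reduce to the case $\Omega=\bR^d$ and then prove the inequality there by a mollification argument with a carefully chosen kernel; the bounded-domain case will follow by extension.

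\textbf{The case $\Omega=\bR^d$.} First I would fix once and for all a kernel $\phi\in C^\infty_0(\bR^d)$ with $\int\phi\,dx=1$ and vanishing moments $\int y^\gamma\phi(y)\,dy=0$ for $1\le|\gamma|\le k$, set $\phi_\varepsilon(x)=\varepsilon^{-d}\phi(x/\varepsilon)$, and let $u_\varepsilon=u*\phi_\varepsilon$. Two families of estimates drive the argument. First, by differentiating the kernel, $\|D^m u_\varepsilon\|_{L_\infty(\bR^d)}\le N\varepsilon^{-m}\|u\|_{L_\infty(\bR^d)}$ for every $m\ge 0$. Second, for $0\le m\le k$, the moment conditions let one subtract from $D^m u$ its degree-$(k-m)$ Taylor polynomial at the base point (against which $\phi_\varepsilon$ integrates to zero), so that Taylor's remainder estimate for $D^m u\in C^{k-m+\alpha}$ yields $\|D^m(u-u_\varepsilon)\|_{L_\infty(\bR^d)}\le N\varepsilon^{k-m+\alpha}[u]_{C^{k+\alpha}(\bR^d)}$. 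Then I would split $D^j u=D^j u_\varepsilon+D^j(u-u_\varepsilon)$. For the first summand, the elementary bound $[w]_{C^\beta}\le N\|w\|_{L_\infty}^{1-\beta}\|Dw\|_{L_\infty}^{\beta}$, obtained by distinguishing $|x-x'|\le\varepsilon$ from $|x-x'|>\varepsilon$, applied with the first family for $m=j$ and $m=j+1$, gives $[D^j u_\varepsilon]_{C^\beta(\bR^d)}\le N\varepsilon^{-j-\beta}\|u\|_{L_\infty(\bR^d)}$. For the second summand, setting $h=D^j(u-u_\varepsilon)\in C^{k-j+\alpha}(\bR^d)$, the second family gives $\|h\|_{L_\infty}\le N\varepsilon^{k-j+\alpha}[u]_{C^{k+\alpha}}$ together with either $\|Dh\|_{L_\infty}\le N\varepsilon^{k-j-1+\alpha}[u]_{C^{k+\alpha}}$ when $j<k$, or $[h]_{C^\alpha}\le N[u]_{C^{k+\alpha}}$ when $j=k$ (in which case $j+\beta<k+\alpha$ forces $\beta<\alpha$); splitting once more according to $|x-x'|\le\varepsilon$ or $|x-x'|>\varepsilon$ then yields $[D^j(u-u_\varepsilon)]_{C^\beta(\bR^d)}\le N\varepsilon^{k+\alpha-j-\beta}[u]_{C^{k+\alpha}(\bR^d)}$. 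Adding the two bounds gives the inequality on $\bR^d$, with the sharper factor $\varepsilon^{-j-\beta}$ rather than $1+\varepsilon^{-j-\beta}$.

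\textbf{Bounded smooth $\Omega$.} Next I would invoke a bounded linear extension operator $E\colon C^{k+\alpha}(\Omega)\to C^{k+\alpha}(\bR^d)$ with $Eu|_\Omega=u$ and $\|Eu\|_{C^{k+\alpha}(\bR^d)}\le N(\|u\|_{L_\infty(\Omega)}+[u]_{C^{k+\alpha}(\Omega)})$ (Whitney or Stein extension), apply the $\bR^d$ estimate to $Eu$, and restrict back to $\Omega$. The term $1+\varepsilon^{-j-\beta}$ appears precisely because the extension injects an $\|u\|_{L_\infty(\Omega)}$ contribution into the top-order seminorm of $Eu$, and because for large $\varepsilon$ one may simply replace $\varepsilon$ by $\min\{\varepsilon,\operatorname{diam}\Omega\}$.

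The main point requiring care is the bookkeeping in the final split for $[D^j(u-u_\varepsilon)]_{C^\beta}$: one must treat the borderline $j=k$, where only $\alpha$-Hölder (not Lipschitz) control of $D^k u$ is available and the constraint $\beta<\alpha$ has to be invoked, on the same footing as the generic case $j<k$. Everything else is classical; see Lemma 6.3.1 in \cite{Kr_96}.
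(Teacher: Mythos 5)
The paper itself does not prove this lemma; it cites Lemma 6.3.1 of Krylov's book \cite{Kr_96}, so there is no in-paper argument against which to compare. Your mollification proof on $\bR^d$ is a correct and self-contained alternative: the vanishing-moment kernel, the two families of bounds $\|D^m u_\varepsilon\|_{L_\infty}\lesssim\varepsilon^{-m}\|u\|_{L_\infty}$ and $\|D^m(u-u_\varepsilon)\|_{L_\infty}\lesssim\varepsilon^{k-m+\alpha}[u]_{C^{k+\alpha}}$, the split $D^j u=D^j u_\varepsilon+D^j(u-u_\varepsilon)$, and the case distinction $j<k$ versus $j=k$ (where $\beta<\alpha$ is forced) all fit together as you claim and yield the sharp constant $\varepsilon^{-j-\beta}$ on the whole space.

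There is, however, a genuine gap in the bounded-domain reduction. You assert that the Whitney or Stein extension satisfies $\|Eu\|_{C^{k+\alpha}(\bR^d)}\le N\big(\|u\|_{L_\infty(\Omega)}+[u]_{C^{k+\alpha}(\Omega)}\big)$. That estimate is not a property of those extension operators; what they give is $\|Eu\|_{C^{k+\alpha}(\bR^d)}\le N\|u\|_{C^{k+\alpha}(\Omega)}$, with the full norm (all intermediate $\|D^m u\|_{L_\infty(\Omega)}$ and $[D^m u]_{C^\gamma(\Omega)}$) on the right. Replacing that full norm by $\|u\|_{L_\infty(\Omega)}+[u]_{C^{k+\alpha}(\Omega)}$ is exactly the interpolation inequality you are trying to prove on $\Omega$, so the argument as written is circular. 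The reduction can be repaired without changing the tools: plug the honest bound $\|Eu\|_{C^{k+\alpha}(\bR^d)}\le N\|u\|_{C^{k+\alpha}(\Omega)}$ together with $\|Eu\|_{L_\infty(\bR^d)}\le N\|u\|_{L_\infty(\Omega)}$ into your $\bR^d$ estimate to obtain the weak form $[D^j u]_{C^\beta(\Omega)}\le N\varepsilon^{k+\alpha-j-\beta}\|u\|_{C^{k+\alpha}(\Omega)}+N\varepsilon^{-j-\beta}\|u\|_{L_\infty(\Omega)}$; then, applying this weak form with a small fixed $\varepsilon$ to every intermediate seminorm $[D^m u]_{C^\gamma(\Omega)}$ appearing in $\|u\|_{C^{k+\alpha}(\Omega)}$ and absorbing, you get $\|u\|_{C^{k+\alpha}(\Omega)}\le N\big([u]_{C^{k+\alpha}(\Omega)}+\|u\|_{L_\infty(\Omega)}\big)$, and substituting this back gives the stated inequality for $\varepsilon\le 1$ with the factor $1+\varepsilon^{-j-\beta}$; the case $\varepsilon>1$ then follows trivially from the case $\varepsilon=1$. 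Alternatively, one can prove the bounded-domain case directly from the interior cone condition of a smooth bounded domain, which avoids extension operators entirely.
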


\begin{lemma}
        \label{lemma 5.1}
Let $\alpha \in (0, 1]$ and $u \in L_{\infty} C^{\alpha/3, \alpha}_{x, v} (\bR^{1+2d}_T)$ be a function such that $\partial_t u - v \cdot D_x u \in  L_{\infty}  (\bR^{1+2d}_T)$.
Then, $u \in \kC^{\alpha} (\bR^{1+2d}_T)$, and furthermore,  for any $\varepsilon > 0$, one has
\begin{equation*}
\begin{aligned}
 [u]_{\kC^{\alpha} (\bR^{1+2d}_T)} \le  [u]_{L_{\infty} C^{\alpha/3, \alpha}_{x, v} (\bR^{1+2d}_T)} +\varepsilon^{2-\alpha} \|\partial_t u - v \cdot D_x u\|_{L_{\infty}  (\bR^{1+2d}_T)}.
\end{aligned}
\end{equation*}
\end{lemma}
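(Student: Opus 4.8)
The plan is to bound $|u(z_1)-u(z_2)|$ by $N(\cdot)\,\rho^\alpha(z_1,z_2)$ for arbitrary $z_1,z_2\in\bR^{1+2d}_T$, after which the conclusion is immediate. Write $z_i=(t_i,x_i,v_i)$ and set $r=\rho(z_1,z_2)$, so that $|t_1-t_2|\le r^2$, $|x_1-x_2+(t_1-t_2)v_2|\le r^3$, and $|v_1-v_2|\le r$. The natural idea is to travel from $z_1$ to $z_2$ along a path that respects the geometry of the transport operator $Y=\partial_t-v\cdot D_x$: first move in $(t,v)$ only while sliding along a characteristic of $Y$, then use the spatial H\"older regularity to close the gap in $x$.

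Concretely, I would introduce the intermediate point $z_* = (t_2,\, x_1 - (t_1-t_2)v_1,\, v_1)$, obtained by flowing $z_1$ along the characteristic $s\mapsto (t_1+s,\, x_1 - s v_1,\, v_1)$ for time $s=t_2-t_1$; along this curve the function $w(s):=u(t_1+s, x_1-sv_1, v_1)$ satisfies $w'(s)=(Yu)(t_1+s,x_1-sv_1,v_1)$, hence
\[
 |u(z_1)-u(z_*)| = \Big| \int_0^{t_2-t_1} (Yu)(t_1+s, x_1-sv_1, v_1)\, ds\Big| \le |t_1-t_2|\,\|Yu\|_{L_\infty(\bR^{1+2d}_T)} \le r^2 \|Yu\|_{L_\infty(\bR^{1+2d}_T)}.
\]
For the remaining step, note $z_*$ and $z_2$ share the same time slice $t_2$, so by the $L_\infty C^{\alpha/3,\alpha}_{x,v}$ bound,
\[
 |u(z_*)-u(z_2)| \le [u]_{L_\infty C^{\alpha/3,\alpha}_{x,v}(\bR^{1+2d}_T)}\big(|x_1-(t_1-t_2)v_1 - x_2|^{1/3} + |v_1-v_2|\big)^\alpha.
\]
Here $|v_1-v_2|\le r$, and the spatial displacement is $x_1 - x_2 - (t_1-t_2)v_1 = \big(x_1 - x_2 + (t_1-t_2)v_2\big) - (t_1-t_2)(v_1-v_2)$, whose modulus is at most $r^3 + r^2\cdot r = 2r^3$; thus the whole term is $\le [u]_{L_\infty C^{\alpha/3,\alpha}_{x,v}}\,(2^{1/3}r + r)^\alpha \le N(\alpha)[u]_{L_\infty C^{\alpha/3,\alpha}_{x,v}}\,r^\alpha$. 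Combining the two displays,
\[
 |u(z_1)-u(z_2)| \le N(\alpha)\,[u]_{L_\infty C^{\alpha/3,\alpha}_{x,v}(\bR^{1+2d}_T)}\,r^\alpha + r^2\,\|Yu\|_{L_\infty(\bR^{1+2d}_T)},
\]
and since we may assume $r\le\varepsilon$ (if $r>\varepsilon$ then $|u(z_1)-u(z_2)|\le 2\|u\|_{L_\infty}$ and one can instead—or rather, the seminorm bound needs the small-scale estimate; the scaling in the stated inequality reflects exactly this split), writing $r^2 = r^\alpha r^{2-\alpha}\le r^\alpha\varepsilon^{2-\alpha}$ on the regime $r\le\varepsilon$ gives the claimed estimate after dividing by $r^\alpha$.

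The one delicate point is the role of $\varepsilon$: the raw estimate produces $r^{2}\|Yu\|_{L_\infty}$, which is only useful relative to $r^\alpha$ when $r$ is bounded; the statement encodes this by the factor $\varepsilon^{2-\alpha}$, so one should restrict to pairs with $\rho(z_1,z_2)\le\varepsilon$ when extracting that term, and handle $\rho(z_1,z_2)>\varepsilon$ separately (there the $L_\infty C^{\alpha/3,\alpha}_{x,v}$ term already dominates, or one absorbs it into a constant—I would follow the precise bookkeeping used elsewhere in the paper, e.g. mirroring Remark \ref{remark 1.2}). A second technical remark: since $\partial_t u$ and $v\cdot D_x u$ are only distributions individually while $Yu\in L_\infty$, the fundamental-theorem-of-calculus step along characteristics should be justified by mollification in $(x,v)$—replace $u$ by $u*\phi_\delta$, for which $Y(u*\phi_\delta)=(Yu)*\phi_\delta$ is a genuine bounded function, run the argument, and pass to the limit using $\|(Yu)*\phi_\delta\|_{L_\infty}\le\|Yu\|_{L_\infty}$ and a.e. convergence. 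Neither point is a real obstacle; the main content is simply the characteristic-path decomposition together with the algebraic identity $x_1-x_2-(t_1-t_2)v_1 = (x_1-x_2+(t_1-t_2)v_2) - (t_1-t_2)(v_1-v_2)$ that converts the $\rho$-geometry (anchored at $v_2$, or $v_1$) into what the anisotropic H\"older seminorm sees on a fixed time slice.
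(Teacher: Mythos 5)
Your argument is essentially the same as the paper's: both decompose the increment by first flowing one endpoint along a characteristic of $Y=\partial_t-v\cdot D_x$ so as to land on the same time slice as the other endpoint (giving the $r^2\|Yu\|_{L_\infty}$ contribution by the fundamental theorem of calculus), then invoke the $L_\infty C^{\alpha/3,\alpha}_{x,v}$ seminorm across that fixed time slice. The paper flows $z$ backward to time $t_0$ (landing at $(t_0,x+ (t-t_0)v,v)$ after shifting $z_0$ to the origin), whereas you flow $z_1$ forward to time $t_2$; these are interchangeable, and your observation that the resulting spatial displacement differs from $x_1-x_2+(t_1-t_2)v_2$ by $(t_1-t_2)(v_1-v_2)$, of modulus at most $r^3$, is exactly the algebra that makes either choice work.

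Two small slips in your write-up, neither of which damages the idea. First, the intermediate point you flow to is $z_*=(t_2,\,x_1-(t_2-t_1)v_1,\,v_1)=(t_2,\,x_1+(t_1-t_2)v_1,\,v_1)$, i.e. the sign on the shift is opposite to what you wrote; correspondingly the identity should read $x_1-x_2+(t_1-t_2)v_1=\bigl(x_1-x_2+(t_1-t_2)v_2\bigr)+(t_1-t_2)(v_1-v_2)$, with a $+$ in front of the cross term. These two sign errors cancel and your numeric conclusion ($|x_*-x_2|\le 2r^3$) is right, but the displayed identity as written is false. Second, the way you want to produce the factor $\varepsilon^{2-\alpha}$ is via the scaling from Lemma~\ref{lemma 2.0} (this is what the paper's opening line ``we only need to prove the assertions with $\varepsilon=1$'' refers to), not a literal case split on $\rho(z_1,z_2)\lessgtr\varepsilon$. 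Your parenthetical hedging there is warranted: as you note, when $r>\varepsilon$ the bound $r^2\le r^\alpha\varepsilon^{2-\alpha}$ fails, and one is forced either to rescale or to keep an $L_\infty$ term around, exactly as in the paper's own display \eqref{eq5.2} which carries $\||u|+|f|\|_{L_\infty}$. So you have correctly put your finger on where care is required; the clean resolution is the dilation argument of Lemma~\ref{lemma 2.0}.
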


\begin{proof}
Note that by using Lemma \ref{lemma 2.0} and a scaling argument, we only need to prove the assertions with $\varepsilon = 1$.
We will show that
\begin{equation}
            \label{eq5.2}
    |u (z_0) - u (z)|
    \le \big(2[u]_{L_{\infty} C^{\alpha/3, \alpha}_{x, v} (\bR^{1+2d}_T)} + \||u| + |f|\|_{L_{\infty} (\bR^{1+2d}_T}\big) \rho^{\alpha} (z_0, z).
\end{equation}
Fix $z_0, z \in \bR^{1+2d}_T$.
By shifting (see Lemma \ref{lemma 2.0}), we may also assume that $z_0 = 0$.
We denote $\partial_t u - v \cdot D_x u = f$. Then, by the fundamental theorem of calculus,
$$
    u (z) = u (0, x + tv, v) + \int_0^t f (t', x + (t-t') v, v) \, dt'.
$$
 We then obtain
\begin{align*}
  &  |u (0) - u (z)| \le |u (0, x + tv, v) - u (0)| + \int_0^t |f (t', x + (t-t')v, v)|\, dt'\\
    &\le [u]_{L_{\infty} C^{\alpha/3, \alpha}_{x, v} (\bR^{1+2d}_T)} (|x + t v|^{1/3} +|v|)^{\alpha} + t^{\alpha/2}  \|f\|_{L_{\infty} (\bR^{1+2d}_T)}\\
&
    \le \big(2[u]_{L_{\infty} C^{\alpha/3, \alpha}_{x, v} (\bR^{1+2d}_T)} + \|f\|_{L_{\infty} (\bR^{1+2d}_T)}\big) \rho^{\alpha} (0, z),
\end{align*}
and, thus, \eqref{eq5.2} is valid.
\end{proof}

\begin{lemma}
        \label{lemma B.4}
Let $\alpha \in (0, 1)$, $c \ge 1$, $r > 0$, $z_0 \in \overline{\bR^{1+2d}_T}$, $f$ and $h$ be measurable functions such that $[f]_{L_{\infty}  C^{\alpha/3, \alpha}_{x, v} (\bR^{1+2d}_T)}$, $[h]_{\kC^{\alpha} (\bR^{1+2d}_T)} < \infty$, and $\chi (t) := f (t, x_0 - (t-t_0)v_0, v_0)$.
Then, the following assertions hold.
\begin{align}
\label{eqB.4.1}
  &(i)  \sum_{k =0}^{\infty} 2^{-k} (|f - \chi|^2)_{Q_{r, 2^k c  r}(z_0)}^{1/2} \le N [f]_{ L_{\infty} C^{\alpha/3, \alpha}_{x, v} (\bR^{1+2d}_T) } (c r)^{\alpha},\\
  &(ii) \sum_{k =0}^{\infty} 2^{-k} (|h - (h)_{Q_{r, 2^k c  r }(z_0)}|^2)_{Q_{r, 2^k c  r }(z_0)}^{1/2}
  \le N  [h]_{\kC^{\alpha} (\bR^{1+2d}_T) } (c r)^{\alpha}\notag,
\end{align}
where $N = N (\alpha)$.
\end{lemma}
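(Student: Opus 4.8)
The plan is to bound each of the two sums termwise by a geometric series in $k$ with ratio $2^{-(1-\alpha)}<1$, exploiting $\alpha<1$. Throughout, abbreviate $Q_k:=Q_{r,\,2^k c r}(z_0)$. Since $c\ge 1$ and $k\ge 0$, the definition \eqref{eq1.10} shows that every $z=(t,x,v)\in Q_k$ satisfies $|t-t_0|<r^2$ and $|x-x_0+(t-t_0)v_0|^{1/3}+|v-v_0|\le 2^k c r+r\le 2^{k+1}c r$.

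For $(i)$, fix $k$ and $z=(t,x,v)\in Q_k$. For a.e.\ $t$ the slice $f(t,\cdot,\cdot)$ lies in $C^{\alpha/3,\alpha}_{x,v}$, so
\[
 |f(z)-\chi(t)| = \big|f(t,x,v)-f\big(t,\,x_0-(t-t_0)v_0,\,v_0\big)\big| \le [f]_{L_\infty C^{\alpha/3,\alpha}_{x,v}(\bR^{1+2d}_T)}\,\big(|x-x_0+(t-t_0)v_0|^{1/3}+|v-v_0|\big)^\alpha,
\]
which by the preceding remark is at most $[f]_{L_\infty C^{\alpha/3,\alpha}_{x,v}(\bR^{1+2d}_T)}\,2^{(k+1)\alpha}(c r)^\alpha$. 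Hence $(|f-\chi|^2)^{1/2}_{Q_k}\le 2^{(k+1)\alpha}(cr)^\alpha\,[f]_{L_\infty C^{\alpha/3,\alpha}_{x,v}(\bR^{1+2d}_T)}$, and summing $\sum_{k\ge 0}2^{-k}2^{(k+1)\alpha}=2^\alpha(1-2^{\alpha-1})^{-1}=:N(\alpha)$ gives \eqref{eqB.4.1}.

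For $(ii)$, since the average minimizes the mean-square deviation,
\[
 \big(|h-(h)_{Q_k}|^2\big)_{Q_k}\le \fint_{Q_k}\fint_{Q_k}|h(z)-h(z')|^2\,dz\,dz' \le [h]_{\kC^\alpha(\bR^{1+2d}_T)}^2\Big(\sup_{z,z'\in Q_k}\rho(z,z')\Big)^{2\alpha},
\]
so it remains to show $\rho(z,z')\le 2\cdot 2^k c r$ for $z,z'\in Q_k$. The entries $|t-t'|^{1/2}$ and $|v-v'|$ of $\rho$ (see \eqref{1.3}) are at most $\sqrt2\,r$ and $2r$; for the shear entry, the identity
\[
 x-x'+(t-t')v' = \big(x-x_0+(t-t_0)v_0\big)-\big(x'-x_0+(t'-t_0)v_0\big)+(t-t')(v'-v_0),
\]
together with $|x-x_0+(t-t_0)v_0|,\ |x'-x_0+(t'-t_0)v_0|<(2^k c r)^3$, $|t-t'|<r^2$, and $|v'-v_0|<r$, gives $|x-x'+(t-t')v'|<4\cdot 2^{3k}c^3 r^3$, so its cube root is $\le 4^{1/3}2^k c r\le 2\cdot 2^k c r$. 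Plugging this bound back and summing $\sum_{k\ge0}2^{-k}(2\cdot 2^k c r)^\alpha=2^\alpha(cr)^\alpha\sum_{k\ge0}2^{-k(1-\alpha)}=N(\alpha)(cr)^\alpha$ yields the second inequality.

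The whole argument is bookkeeping; the only place needing a little attention is the shear-entry bound in $(ii)$, where the non-translation-invariance of $\rho$ forces one to recenter the $x$-differences using the $(t-t_0)v_0$ shift built into the cylinders $Q_{r,\cdot}(z_0)$. No iteration, covering, or compactness argument is involved.
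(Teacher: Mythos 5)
Your proof is correct and follows essentially the same strategy as the paper: a termwise geometric series in $k$ with ratio $2^{\alpha-1}$, the pointwise $C^{\alpha/3,\alpha}_{x,v}$ bound for $(i)$, and the double-average bound $(|h-(h)_{Q_k}|^2)_{Q_k}\le\fint\fint|h(z)-h(z')|^2$ for $(ii)$. The only cosmetic difference is in $(ii)$, where the paper cites Lemma \ref{lemma B.1} $(i)$--$(ii)$ (quasi-triangle inequality) to get $\rho(z_1,z_2)\lesssim\rho(z_1,z_0)+\rho(z_2,z_0)\lesssim 2^kcr$, whereas you recompute the shear component of $\rho(z,z')$ from scratch via the recentering identity -- both give the same $\rho(z,z')\lesssim 2^kcr$ bound.
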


\begin{proof}
$(i)$ Denote $A = [f]_{L_{\infty} C^{\alpha/3, \alpha}_{x, v} (\bR^{1+2d}_T)}$ and note that
for any $z \in Q_{r, 2^k c r} (z_0)$, we have
\begin{align*}
     &|f (t, x, v) - f (t, x_0 - (t-t_0) v_0, v_0)|  \le A (|x-x_0+(t-t_0)v_0|^{1/3} + |v-v_0|)^{\alpha} \\
     &\le A (2^k c)^{\alpha}   ((2^k c)^{-1}|x-x_0+(t-t_0)v_0|^{1/3} +  |v-v_0|)^{\alpha} \le N A (2^k c)^{\alpha} r^{\alpha}.
\end{align*}
Then, the series on the left-hand side of \eqref{eqB.4.1} is less then
$$
    N A (c r)^{\alpha} \sum_{k=0}^{\infty} 2^{(-1+\alpha)k} \le N A (c r)^{\alpha},
$$
and hence, \eqref{eqB.4.1} is true.

$(ii)$ For any $z_1, z_2$ such that $z_i \in Q_{r, 2^k c r} (z_0), i = 1, 2$, by Lemma \ref{lemma B.1} $(i)$ and $(ii)$, one has
$$
    |h (z_1) - h (z_2)| \le  N  [h]_{\kC^{\alpha} (\bR^{1+2d}_T) } (\rho^{\alpha} (z_1, z_0) + \rho^{\alpha} (z_2, z_0))
    \le N [h]_{\kC^{\alpha} (\bR^{1+2d}_T) } (2^k c r)^{\alpha}.
$$
The last inequality and the fact that
$$
    (|h - (h)_G|^2)_{G} \le  \fint_{G} \fint_G |h (z_1) - h (z_2)|^2 \, dz_1 dz_2
$$
imply the validity of the assertion $(ii)$.
\end{proof}

\begin{lemma}
        \label{lemma B.5}
Let $s \in (0, 1/2)$.

$(i)$ For any Schwartz function $u$, the following pointwise formula holds:
\begin{equation*}
        D_x (-\Delta_x)^{-s} u (x) = N (d, s) \, \text{p.v.} \int u (x-y) \frac{y}{|y|^{d-2s+2}} \, dy.
\end{equation*}
This formula is also valid for  $u \in C^1_0 (\bR^d)$.

$(ii)$ For any $u \in C^2_0 (\bR^d)$, one has
\begin{equation*}
    \big(D_x (-\Delta_x)^{-s}\big) \big((-\Delta_x)^{s} u\big) \equiv D_x u.
\end{equation*}
\end{lemma}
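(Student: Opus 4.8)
The plan is to deduce part $(i)$ from the fact that $(-\Delta_x)^{-s}$ is convolution against the Riesz kernel, and then to obtain part $(ii)$ by combining $(i)$ with the elementary multiplier identity $|\xi|^{-2s}\,|\xi|^{2s}=1$ together with a density argument.

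For part $(i)$, since $s\in(0,1/2)$ we have $0<d-2s<d$, so the classical formula for the Fourier transform of $|x|^{-(d-2s)}$ gives a constant $c_{d,s}>0$ with
\[
  (-\Delta_x)^{-s}u(x)=c_{d,s}\int_{\bR^d}\frac{u(x-y)}{|y|^{d-2s}}\,dy,\qquad u\in\mathcal S(\bR^d).
\]
Since $D_{x_i}$ commutes with convolution, it remains to identify the distributional derivative of $|x|^{-(d-2s)}$ with $-(d-2s)\,\text{p.v.}\bigl[x_i|x|^{-(d-2s+2)}\bigr]$. For $\phi\in C^\infty_0(\bR^d)$ one writes $\langle D_{x_i}|x|^{-(d-2s)},\phi\rangle=-\lim_{\varepsilon\to0}\int_{|x|>\varepsilon}|x|^{-(d-2s)}D_{x_i}\phi\,dx$ and integrates by parts on $\{|x|>\varepsilon\}$; the surface term over $\{|x|=\varepsilon\}$ is $O(\varepsilon^{2s})$ (the $\phi(0)$-contribution vanishes by oddness of $x_i$ on the sphere), hence tends to $0$, and no Dirac mass is produced. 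This proves the formula for Schwartz $u$ with $N(d,s)=-(d-2s)c_{d,s}$. For $u\in C^1_0(\bR^d)$ the right-hand side is well defined: splitting at $|y|=1$, the part over $|y|<1$ is dominated using $|u(x-y)-u(x)|\le\|D u\|_{L_\infty}|y|$ and $\int_{|y|<1}|y|^{-(d-2s)}\,dy<\infty$, and the part over $|y|>1$ by $\|u\|_{L_\infty}\int_{|y|>1}|y|^{-(d-2s+1)}\,dy<\infty$. Approximating $u$ by mollifications $u_n\in C^\infty_0(\bR^d)$ with $u_n\to u$ in $C^1$ and supports in a fixed compact set, and passing to the limit on both sides, one gets $(i)$ for $u\in C^1_0(\bR^d)$.

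For part $(ii)$, let $u\in C^2_0(\bR^d)$ and put $w:=(-\Delta_x)^s u$, given by the pointwise formula \eqref{1.9}. Differentiating under the integral and using $|D_{x_i}u(x)-D_{x_i}u(x+y)|\le\min\{2\|Du\|_{L_\infty},\,\|D^2u\|_{L_\infty}|y|\}$ together with $2s<1$ shows $w\in C^1$; moreover $w$ and $Dw$ vanish at infinity because, for $x$ far from $\operatorname{supp}u$, only the tail $|y|\ge\operatorname{dist}(x,\operatorname{supp}u)$ of the defining integral contributes. Thus $w\in C^1_0(\bR^d)$ and part $(i)$ applies to it, so
\[
  \bigl(D_x(-\Delta_x)^{-s}\bigr)\bigl((-\Delta_x)^s u\bigr)(x)=N(d,s)\,\text{p.v.}\!\int_{\bR^d}\frac{y}{|y|^{d-2s+2}}\,w(x-y)\,dy.
\]
On the other hand, for Schwartz $v$ one has $(-\Delta_x)^{-s}(-\Delta_x)^s v=v$ by comparing Fourier multipliers, hence $D_x(-\Delta_x)^{-s}(-\Delta_x)^s v=D_x v$. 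Choosing Schwartz $u_n\to u$ with $u_n,Du_n,D^2u_n$ converging uniformly and supported in a fixed compact set, the functions $(-\Delta_x)^s u_n$ converge to $w$ in $C^1$ with uniform support, so the principal-value integrals pass to the limit and we conclude $D_x u(x)=N(d,s)\,\text{p.v.}\int_{\bR^d}\frac{y}{|y|^{d-2s+2}}w(x-y)\,dy$, which is the asserted identity.

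The main obstacle is the careful verification that the distributional derivative of the Riesz kernel equals the principal value of its pointwise derivative with no additional Dirac component --- that is, the vanishing of the boundary term in the integration by parts --- together with the attendant bookkeeping of uniformly integrable tails in the two approximation steps; the underlying Fourier-transform identities and the explicit value of $N(d,s)$ are classical.
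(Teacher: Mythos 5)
The paper states Lemma \ref{lemma B.5} without proof, treating it as a standard fact about Riesz potentials, so there is no paper argument to compare against; what matters is whether your proof stands on its own, and in essence it does. Your route is the natural one: write $(-\Delta_x)^{-s}u$ as convolution with the Riesz kernel $c_{d,s}|y|^{-(d-2s)}$, identify the distributional derivative of that kernel with the principal-value kernel $-(d-2s)\,y|y|^{-(d-2s+2)}$ by integrating by parts over $\{|x|>\varepsilon\}$ and killing the surface term (correctly observing that the leading $\phi(0)$ piece vanishes by odd symmetry, leaving an $O(\varepsilon^{2s})$ remainder), then extend by density using that the tail weight $|y|^{-(d-2s+1)}$ is integrable precisely because $2s<1$. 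Part $(ii)$ correctly reduces to the Fourier-multiplier identity on Schwartz functions after checking $w=(-\Delta_x)^s u\in C^1_0$.

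Two small imprecisions in the density steps are worth flagging. First, in the extension of $(i)$ to $u\in C^1_0(\bR^d)$: recall that the paper's $C^1_0(\bR^d)$ consists of $C^1_b$ functions vanishing at infinity together with their first derivatives --- these are \emph{not} compactly supported in general, so one cannot take mollifications ``with supports in a fixed compact set.'' You should instead approximate by, say, $u_n = (u\,\zeta_n)*\rho_{\varepsilon_n}$ with $\zeta_n$ a cutoff at scale $n$; then $u_n\in C^\infty_0$ and $u_n\to u$, $Du_n\to Du$ uniformly (using that $u,Du$ vanish at infinity), and that is enough: the inner part of the p.v.\ integral over $|y|<1$ is controlled by $\|D(u_n-u)\|_{L_\infty}$ after using oddness of the kernel, and the outer part by $\|u_n-u\|_{L_\infty}\int_{|y|>1}|y|^{-(d-2s+1)}\,dy<\infty$. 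No uniform support is required. Second, in part $(ii)$ the phrase ``$(-\Delta_x)^s u_n$ converge to $w$ in $C^1$ with uniform support'' is incorrect as written, since $(-\Delta_x)^s u_n$ never has compact support; again uniform $C^1$ convergence (which does follow from $u_n\to u$ in $C^2$ with supports in a fixed compact set, by splitting the defining integral \eqref{1.9} at $|y|=1$) is exactly what is needed and is what you actually use. With these phrasings corrected, the proof is sound.
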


\end{document}